%% LyX 2.3.3 created this file.  For more info, see http://www.lyx.org/.
%% Do not edit unless you really know what you are doing.
\documentclass[11pt,oneside,english]{amsart}
\usepackage[T1]{fontenc}
\usepackage[latin9]{inputenc}
\usepackage{geometry}
\geometry{verbose,tmargin=30mm,bmargin=30mm,lmargin=30mm,rmargin=30mm}
\pagestyle{plain}
\usepackage{textcomp}
\usepackage{amsbsy}
\usepackage{amstext}
\usepackage{amsthm}
\usepackage{amssymb}
\usepackage{wasysym}

\makeatletter
%%%%%%%%%%%%%%%%%%%%%%%%%%%%%% Textclass specific LaTeX commands.
\numberwithin{equation}{section}
\numberwithin{figure}{section}
\theoremstyle{plain}
\newtheorem{thm}{\protect\theoremname}[section]
\theoremstyle{plain}
\newtheorem{lem}[thm]{\protect\lemmaname}
\theoremstyle{plain}
\newtheorem{prop}[thm]{\protect\propositionname}
\theoremstyle{definition}
\newtheorem{example}[thm]{\protect\examplename}
\theoremstyle{remark}
\newtheorem{rem}[thm]{\protect\remarkname}

%%%%%%%%%%%%%%%%%%%%%%%%%%%%%% User specified LaTeX commands.

\def\makebbb#1{
    \expandafter\gdef\csname#1\endcsname{
        \ensuremath{\Bbb{#1}}}
}
\makebbb{R}
\makebbb{N}
\makebbb{Z}
\makebbb{C}
\makebbb{G}
\makebbb{Q}
\makebbb{H}
\makebbb{P}
\makebbb{B}
\makebbb{K}
\makebbb{E}

\makeatother

\usepackage{babel}
\providecommand{\examplename}{Example}
\providecommand{\lemmaname}{Lemma}
\providecommand{\propositionname}{Proposition}
\providecommand{\remarkname}{Remark}
\providecommand{\theoremname}{Theorem}

\begin{document}
\title{Convergence rates for discretized Monge-Ampère equations and quantitative
stability of optimal transport }
\author{Robert J. Berman}
\begin{abstract}
In recent works - both experimental and theoretical - it has been
shown how to use computational geometry to efficently construct approximations
to the optimal transport map between two given probability measures
on Euclidean space, by discretizing one of the measures. Here we provide
a quantitative convergence analysis for the solutions of the corresponding
discretized Monge-Ampère equations. This yields $H^{1}-$converge
rates, in terms of the corresponding spatial resolution $h,$ of the
discrete approximations of the optimal transport map, when the source
measure is discretized and the target measure has bounded convex support.
Periodic variants of the results are also established. The proofs
are based on new quantitative stability results for optimal transport
maps, shown using complex geometry.
\end{abstract}

\address{Robert J. Berman, Mathematical Sciences, Chalmers University of Technology
and the University of Gothenburg, SE-412 96 Göteborg, Sweden, tel:
+46(0)735629321}
\email{robertb@chalmers.se}
\maketitle

\section{Introduction}

The theory of \emph{optimal transport} \cite{v1}, which was originally
motivated by applications to logistics and economics, has generated
a multitude of applications ranging from meteorology and cosmology
to image processing and computer graphics in more recent years \cite{sa,so}.
This has led to a rapidly expanding literature on numerical methods
to construct optimal transport maps, using an appropriate discretization
scheme. From the PDE point of view this amounts to studying discretizations
of the second boundary value problem for the Monge-Ampère operator.
The present paper is concerned with a particular discretization scheme,
known as \emph{semi-discrete optimal transport} in the optimal transport
literature (see \cite{b-d} and references therein for other discretization
schemes, based on finite differences). This approach uses computational
geometry to compute a solution to the corresponding discretized Monge-Ampère
equation and exhibits remarkable numerical performance, using a damped
Newton iteration \cite{me,l-s}. The convergence of the iteration
towards the discrete solution was recently settled in \cite{k-m-t}
and one of the main aims of the present paper is to establish quantitative
convergence rates of the discrete solutions, as the spatial resolution
$h$ tends to zero. 

\subsection{\label{subsec:Background intro}Background }

Through out the paper we fix open bounded domains $X$ and $Y$ in
$\R^{n}$ with $Y$ assumed convex and a probability measure $\nu$
on $Y$ with a density which is uniformly bounded from below:
\begin{equation}
\nu=1_{Y}g(y)dy,\,\,\,\,g\in L^{1}(\R^{n}),\,\,\delta:=\inf_{Y}g>0\label{eq:def of nu}
\end{equation}
We recall that in the case when $\mu$ is a probability measure on
$X$ which is also absolutely continuous with respect to $dx,$ i.e.
$\mu\in\mathcal{P}_{ac}(X),$ then a map $T$ in $L^{\infty}(X,Y)$
is said to be a\emph{ transport map} (with \emph{source} $\mu$ and
\emph{target} $\nu)$ if 
\[
T_{*}\mu=\nu
\]
 and $T$ is said to be an\emph{ optimal transport map }(with respect
to the Euclidean cost function $|x-y|^{2}),$\emph{ }denoted by $T_{\mu}^{\nu},$
if it realizes the infimum defining the Wasserstein $L^{2}-$distance
$W_{2}(\mu,\nu)$ \cite{v1}: 
\[
W_{2}(\mu,\nu):=\inf_{T}\int_{X}|x-Tx|^{2}\mu,
\]
 where the infimum ranges over all transport maps. By Brenier's theorem
\cite{br} there exists a unique optimal transport map $T_{\mu}^{\nu}$
and it has the characteristic property of being a gradient map:
\[
T_{\mu}^{\nu}=\nabla\phi
\]
(in the almost everywhere sense) for a convex function $\phi$ on
$X,$ called the\emph{ potential }of $T_{\mu}^{\nu}$ (see \cite{v1}
for further background on optimal transport theory). The potential
$\phi$ is the unique (modulo an additive constant) convex solution
to the corresponding \emph{second boundary value problem} for the
\emph{Monge-Ampère operator}: the sub-gradient image of $\phi$ is
contained in the closure of $Y,$ 
\begin{equation}
(\partial\phi)(X)\subseteq\bar{Y}\label{eq:sec bd cond intro}
\end{equation}
and $\phi$ solves the equation 
\begin{equation}
MA_{g}(\phi)=\mu,\label{eq:ma eq with mu intro}
\end{equation}
where the Monge-Ampère measure $MA_{g}$ is the probability measure
on $X$ defined by
\begin{equation}
MA_{g}(\phi):=g(\nabla\phi)\det(\nabla^{2}\phi)dx\label{eq:formul for MA g intro}
\end{equation}
when $\phi$ is $C^{2}-$smooth and the general definition, due to
Alexandrov, is recalled in Section \ref{subsec:Recap-of-the}. We
will say that $(X,Y,\mu,\nu)$ is\emph{ regular} if the corresponding
solution $\phi$ is in $C^{2}(\bar{X)}.$ By Cafferelli's regularity
results \cite{c1,ca0,c2} this is the case if $X$ and $Y$ are assumed
strictly convex with $C^{2}-$boundary and the densities of $\mu$
and $\nu$ are Hölder continuous and strictly positive on $\bar{X}$
and $\bar{Y},$ respectively. Then $\phi$ defines a classical solution
of the corresponding PDE and the corresponding optimal transport map
$\nabla\phi$ yields a diffeomorphism between the closures of $X$
and $Y.$ In fact, as is well-known, for \emph{any} probability measure
$\mu$ there exists a solution (in the weak sense of Alexandrov) of
the corresponding second boundary value problem, which is uniquely
determined up to normalization (Lemma \ref{lem:uniqueness and extension}).
In the sequel it will be convenient to use the normalization condition
that the integral of a solution over $(X,dx)$ vanishes. 

\subsubsection{\label{subsec:Discretization-using-semi-discre}Discretization using
semi-discrete transport}

A time-honored approach for discretizing Monge-Ampère equations, which
goes back to the classical work of Alexandrov on Minkowski type problems
for convex bodies and polyhedra, amounts to replacing the given probability
measure $\mu$ with a sequence of discrete measures converging weakly
towards $\mu$ (see \cite[Thm 7.3.2 and Section 7.6.2]{al} and \cite[Section 17]{ba}).
A standard way to obtain such a sequence is to first discretize $X$
by fixing a sequence of ``point clouds'' $(x_{1},...,x_{N})\in X^{N}$
and a dual tessellation of $X$ with $N$ cells $(C_{i})_{i=1}^{N}.$
This means that the union of $C_{i}$ cover $X,$ $x_{i}\in C_{i}$
and the intersection of different cells have zero Lebesgue measure.
For example, given a point cloud the corresponding Voronoi tessellation
of $X$ provides a canonical dual tessellation of $X$. The ``spatial
resolution'' of the discretization is quantified by
\[
h:=\max_{i\leq N}\text{diam\ensuremath{(C_{i}),}}
\]
where $\text{diam}\ensuremath{(C_{i}),}$ denotes the diameter of
the cell $C_{i}.$ The corresponding discretization of the measure
$\mu$ is then defined by setting
\begin{equation}
\mu_{h}:=\sum_{i=1}^{N}f_{i}\delta_{x_{i}},\,\,\,\,f_{i}:=\mu(C_{i})\label{eq:def of mu h}
\end{equation}
 where we have used the subindex $h$ to emphasize that we are focusing
on the limit when $h\rightarrow0$ (see also Section \ref{subsec:Other-discretization-schemes}
for other discretizations). This discretization scheme corresponds,
from the point of view of optimal transport, to the notion of \emph{semi-discrete
}optimal transport (since it corresponds to optimal transport between
the ``continuous'' measure $\nu$ and the discrete measure $\mu_{h};$
see \cite{k-m-t,l-s} and Section \ref{subsec:Comparison-with-semi-discrete}).
From the point of view of numerics this kind of discretization scheme
was first introduced in the different setting of the Dirichlet problem
in \cite{o-p}.

\subsection{\label{subsec:Convergence-rates-for}Convergence rates for discretized
Monge-Ampère equations}

Given a point-cloud on $X$ with spatial resolution $h$ we denote
by $\phi_{h}$ the normalized convex solution to the corresponding
Monge-Ampère equation \ref{eq:ma eq with mu intro} with right-hand
side given by the discrete measure $\mu_{h}.$ It follows from a standard
general convexity argument that, in the limit when $h\rightarrow0$
(and hence $N\rightarrow\infty)$ the functions $\phi_{h}$ converge
uniformly towards the solution $\phi$ and, as a consequence, the
gradients of $\phi_{h}$ converge weakly towards the gradient of $\phi$
(in the sense of distributions; see Prop \ref{prop:qual stab}). But
the argument gives no control on the rate of convergence (in terms
of $h$ or $N^{-1}$$)$ and the main purpose of the present work
is to provide such a result: 
\begin{thm}
\label{thm:thm H1 intro}\emph{(Regular case)} Assume that $(X,Y,\mu,\nu)$
is regular and let $\mu_{h}$ be a discretization of $\mu.$ Denote
by $\phi$ and $\phi_{h}$ the solutions to the corresponding second
boundary value problems for the Monge-Ampère operator. There exists
a constant $C_{1}$ (depending on $(X,Y,\mu,\nu))$ such that
\begin{equation}
\left\Vert \phi_{h}-\phi\right\Vert _{H^{1}(X)}:=\left(\int_{X}|\nabla\phi_{h}-\nabla\phi|^{2}dx\right)^{1/2}\leq C_{1}h^{1/2}\label{eq:estimate on H one in thm intr}
\end{equation}
More precisely, if $\nabla^{2}\phi\geq C_{0}^{-1}I,$ then
\begin{equation}
C_{1}=\sqrt{n(n+1)C_{0}^{n-1}\delta^{-1}}\label{eq:formula for C one in first thm intro}
\end{equation}
where $\delta$ is the constant in formula \ref{eq:def of nu}). The
result thus holds more generally as long as $\phi$ is uniformly convex
and $\delta>0.$ 
\end{thm}

As a consequence, if $C_{P}$ denotes the constant in the $L^{2}-$Poincaré
inequality on $X$ (i.e. $\left\Vert u\right\Vert _{L^{2}(X)}\leq C_{P}\left\Vert \nabla u\right\Vert _{L^{2}(X)}$
for $u$ with zero average), then 
\begin{equation}
\left\Vert \phi_{h}-\phi\right\Vert _{L^{2}(X)}\leq C_{P}C_{1}h^{1/2}\label{eq:L two estimate thm H1 intro}
\end{equation}
From the point of view of optimal transport theory the previous theorem
says that the optimal transport map $\nabla\phi$ from $\mu$ to $\nu$
(defining a diffeomorphism between the closures of $X$ and $Y)$
may be quantitatively approximated by the $L^{\infty}-$maps $\nabla\phi_{h}.$
As explained in Section \ref{sec:Formulation-in-terms} the gradient
maps $\nabla\phi_{h}$ are piecewise constant on the convex hull of
the corresponding point cloud (more precisely, $\nabla\phi_{h}$ is
constant on the facets of the weighted Delaunay tesselation of $\R^{n}$
induced by $\phi_{h}).$ 

We will also establish the following universal bound which applies
in the general case. It yields, in particular, a quantitative approximation
of the corresponding optimal transport map if $\mu$ has a density.
\begin{thm}
\label{thm:(General-case) intro}\emph{(General case)} Let $X$ and
$Y$ be bounded domains in $\R^{n}$ with $Y$ assumed convex and
$\nu$ a probability measure of the form \ref{eq:def of nu}. Then,
for any given probability measure $\mu$ on $X,$ 
\[
\left\Vert \phi_{h}-\phi\right\Vert _{H^{1}(X)}\leq C_{2}h^{1/2^{n}},
\]
 where the constant $C_{2}$ only depends on upper bounds on the diameters
of $X$ and $Y$ and on positive lower bounds on the volume $V(Y)$
of $Y$ and $\delta(:=\sup_{Y}(\nu/dy)).$
\end{thm}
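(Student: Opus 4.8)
The plan is to reduce Theorem~\ref{thm:(General-case) intro} to a quantitative stability estimate for the Brenier potential viewed as a function of the source measure, and to feed into it the elementary bound $W_{1}(\mu,\mu_{h})\le h$. For the latter, recall that $x_{i}\in C_{i}$ and that the cells $(C_{i})$ partition $X$ up to a Lebesgue null set; hence for every $1$-Lipschitz function $f$ on $X$
\[
\Big|\int f\,d(\mu-\mu_{h})\Big|=\Big|\sum_{i=1}^{N}\int_{C_{i}}\big(f(x)-f(x_{i})\big)\,d\mu\Big|\le\max_{i}\mathrm{diam}(C_{i})=h,
\]
so that the Kantorovich--Rubinstein distance satisfies $W_{1}(\mu,\mu_{h})\le h$. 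It therefore suffices to prove: there is a constant $C$, depending only on upper bounds for $\mathrm{diam}(X)$, $\mathrm{diam}(Y)$ and on $\delta$ (the volumes of $X$ and $Y$ being normalized), such that for any two probability measures $\mu_{0},\mu_{1}$ on $X$, with normalized convex solutions $\phi_{0},\phi_{1}$ of the second boundary value problem for $MA_{g}$,
\[
\int_{X}|\nabla\phi_{0}-\nabla\phi_{1}|^{2}\,dx\le C\,W_{1}(\mu_{0},\mu_{1})^{1/2^{n-1}}.
\]
Indeed, applying this with $\mu_{0}=\mu$, $\mu_{1}=\mu_{h}$ and taking square roots gives $\|\phi_{h}-\phi\|_{H^{1}(X)}\le C^{1/2}h^{1/2^{n}}$.

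To prove the displayed stability estimate I would first record the a priori bounds that make the constants universal: since $\partial\phi_{i}(X)\subset\bar{Y}$, each $\phi_{i}$ is Lipschitz on $X$ with constant at most $\mathrm{diam}(Y)$, and the normalization $\int_{X}\phi_{i}\,dx=0$ then forces $\|\phi_{i}\|_{L^{\infty}(X)}\le\mathrm{diam}(X)\,\mathrm{diam}(Y)$; in particular $w:=\phi_{0}-\phi_{1}$ has oscillation bounded by $M:=2\,\mathrm{diam}(X)\,\mathrm{diam}(Y)$ and Lipschitz constant at most $2\,\mathrm{diam}(Y)$. The core of the argument is then a pluripotential comparison: complexifying the convex picture, so that the $\phi_{i}$, the second boundary condition, and the weighted Monge--Amp\`ere measures $MA_{g}(\phi_{i})=\mu_{i}$ correspond, via the standard toric/Legendre dictionary, to bounded psh functions and complex Monge--Amp\`ere measures on a fixed model domain, one invokes the quantitative stability of solutions to the complex Monge--Amp\`ere equation announced in the abstract. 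Starting from $\int_{X}|\nabla w|^{2}\,dx$, which after integration by parts pairs $w$ against $dd^{c}w$ wedged with $n-1$ flat factors, one replaces the flat factors one at a time by the Monge--Amp\`ere data: each replacement uses a comparison inequality of the type $\int(-w)\,dd^{c}w\wedge T\le\mathrm{osc}(\,\cdot\,)\cdot\int dd^{c}(\,\cdot\,)\wedge T$ together with Cauchy--Schwarz, and therefore costs a factor bounded by $M$ (harmless, by the a priori bound) while halving the H\"older exponent in the remaining weak norm. After finitely many such steps, their number dictated by the dimension $n$, one reaches an expression controlled by $\int_{X}|w|\,d(\mu_{0}+\mu_{1})$ and ultimately by $\sup_{\mathrm{Lip}(f)\le1}|\int f\,d(\mu_{0}-\mu_{1})|=W_{1}(\mu_{0},\mu_{1})$; carrying the halvings through the bookkeeping reproduces the exponent of the theorem. (Equivalently, one may first use Legendre duality to recast the source perturbation as a target perturbation of the \emph{fixed, regular} measure $\nu$, for which the stability of the optimal transport maps is the form of the complex-geometric result that applies most directly, the lower bound $\delta$ of $\nu/dy$ entering precisely here.)

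The step I expect to be the main obstacle is exactly the one responsible for the exponent $1/2^{n}$ rather than the $h^{1/2}$ of the regular case: since $\mu$ is an arbitrary probability measure, $\phi$ is merely convex and Lipschitz, its Alexandrov Hessian carries no lower bound, and the ``pure'' term $\int\langle\mathrm{cof}(\nabla^{2}\phi_{i})\nabla w,\nabla w\rangle\,dx$ in the polarization of $MA_{g}(\phi_{0})-MA_{g}(\phi_{1})$ cannot be bounded below by a multiple of $\int|\nabla w|^{2}\,dx$. This degeneracy is what forces the indirect, iterative pluripotential comparison in place of a one-shot energy estimate, and what makes each of the iteration steps lose a factor $2$ in the exponent. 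A secondary point requiring care is that every constant produced by the iteration must be shown to depend only on $\mathrm{diam}(X)$, $\mathrm{diam}(Y)$ and $\delta$ after normalizing volumes — in particular the comparison inequalities must be applied on a fixed model domain, with the dependence on $\nu$ entering only through $\delta$ when passing between $MA_{g}(\phi_{i})$ and $\mu_{i}$.
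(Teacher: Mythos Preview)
Your overall strategy coincides with the paper's: reduce to a stability estimate for the potentials in terms of $W_1(\mu_0,\mu_1)$, use the elementary bound $W_1(\mu,\mu_h)\le h$, and obtain the stability estimate by complexifying and running an iterative Cauchy--Schwarz argument of B\l ocki type that halves the exponent $n-1$ times. You also correctly identify the reason the exponent degrades: with $\mu$ arbitrary, $\nabla^2\phi$ has no lower bound, so one cannot dominate $\int|\nabla w|^2\,dx$ directly by the ``energy'' $\int w\,(MA(\phi_0)-MA(\phi_1))$.

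Where your sketch is too thin is precisely the step you call ``integration by parts on a fixed model domain,'' and this is where the substance lies. Integrating by parts on $X$ produces boundary terms you do not discuss; the paper eliminates them by first extending $\phi_0,\phi_1$ canonically to $\R^n$ (Lemma~\ref{lem:uniqueness and extension}) and then compactifying $\C^{*n}$ by the toric variety attached to a rational approximation of $\bar Y$, on which $\phi_0,\phi_1$ become bounded metrics on a line bundle (Lemma~\ref{lem:conv function as metric}). Moreover your ``$n-1$ flat factors'' cannot be the Euclidean form, since that does not extend to the compactification. The paper instead uses a reference $\phi^Y\in\mathcal{C}_Y(\R^n)_+$ with $\nabla^2\phi^Y\le A_1 I$ and $MA(\phi^Y)\ge A_2\,dx$, so that $\int_X|\nabla w|^2\,dx$ is controlled by $\int i\partial w\wedge\bar\partial w\wedge\omega_{\phi^Y}^{n-1}$, to which B\l ocki's inequality on the compact variety applies. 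Making $A_1,A_2$ and the $L^\infty$-bounds depend only on the diameters is nontrivial: the paper takes $\phi^Y$ to be the toric K\"ahler--Einstein potential and invokes a~priori estimates of Donaldson and Klartag, and controls $\|\phi_i-\phi_Y\|_{L^\infty(\R^n)}$ via a Sobolev inequality on $Y$ applied to $\phi_i^*$ (Lemma~\ref{lem:l infty bound in terms of Sob}); your bound $\|\phi_i\|_{L^\infty(X)}\le\mathrm{diam}(X)\,\mathrm{diam}(Y)$ is fine on $X$ but does not suffice globally.

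Finally, the dependence on $\delta$ does not enter ``via Legendre duality'' as you suggest; the paper handles general $g$ by a separate comparison of Aubin-type functionals, proving $I_g\ge\frac{\delta}{n+1}I$ (Proposition~\ref{prop:red to}) through the chain $I_g\ge J_g\ge\delta J\ge\frac{\delta}{n+1}I$, which then reduces everything to $g\equiv 1$.
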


The previous theorems are obtained as straightforward consquences
of the following analytic inequalities of independent interest (see
Theorem \ref{thm:analytic ineqs text} for the precise statements).
Let $\phi_{0}$ and $\phi_{1}$ be two convex functions on $X$ whose
sub-gradient images are equal to $\bar{Y}$ (without of loss of generality
the functions may be assumed to be smooth). Then 

\begin{equation}
\int_{X}|\nabla\phi_{0}-\nabla\phi_{1}|^{2}dx\leq C_{0}\int_{X}(\phi_{1}-\phi_{0})\left(MA_{g}(\phi_{0})-MA_{g}(\phi_{1})\right),\label{eq:analytic ineq reg intro}
\end{equation}
for a constant $C_{0}$ depending on a positive lower bound on $\nabla^{2}\phi_{0}.$
Moreover, in general,
\begin{equation}
\int_{X}|\nabla\phi_{0}-\nabla\phi_{1}|^{2}dx\leq C\left(\int_{X}(\phi_{1}-\phi_{0})\left(MA_{g}(\phi_{0})-MA_{g}(\phi_{1})\right)\right)^{1/2^{n-1}},\label{eq:analytic ineq general intro}
\end{equation}
for a constant $C$ independent of $\phi_{0}$ and $\phi_{1}.$ Using
that 
\begin{equation}
\int_{X}(\phi_{1}-\phi_{0})\left(MA_{g}(\phi_{0})-MA_{g}(\phi_{1})\right)\leq d(Y)W_{1}\left(MA_{g}(\phi_{0}),MA_{g}(\phi_{1})\right),\label{eq:I smaller than Wone intro}
\end{equation}
 where $W_{1}$ denotes the Wasserstein $L^{1}-$distance on $\mathcal{P}(X)$
and $d(Y)$ the diameter of $Y,$ Theorems \ref{thm:thm H1 intro},
\ref{thm:(General-case) intro} are deduced by setting $\phi_{0}=\phi$
and $\phi_{1}=\phi_{h},$ and noting that the right hand side in formula
\ref{eq:I smaller than Wone intro} is bounded from above by $h.$ 

\subsection{Quantitative stability for optimal transport maps}

The combination of the inequalities \ref{eq:analytic ineq reg intro},
\ref{eq:analytic ineq general intro} with the inequality \ref{eq:I smaller than Wone intro}
may be succintly formulated as a quantitative stability result for
optimal transport maps: 
\begin{thm}
\label{thm:(Quantitative-stability--first intro}\emph{(Regular case)}.
Assume that $(X,Y,\mu_{0},\nu)$ is regular. Then there exists a constant
$c_{1},$ depending on $\mu_{0}$ and $\nu,$ such that
\[
\left\Vert T_{\mu_{0}}^{\nu}-T_{\mu_{1}}^{\nu}\right\Vert _{L^{2}(X,dx)}\leq c_{1}W_{1}(\mu_{0},\mu_{1})^{1/2}
\]
 for any $\mu_{1}\in\mathcal{P}_{ac}(X).$ More generally, if the
potential $\phi$ of $T_{\mu_{0}}^{\nu}$ is uniformly convex, then
the inequality above holds with $c_{1}=C_{1}\sqrt{d(Y)},$ where $d(Y)$
denotes the diameter of $Y$ and $C_{1}$ is as in formula \ref{eq:formula for C one in first thm intro}
. Moreover, if $\mu_{0}$ and $\mu_{1}$ have densities in $L^{2}(X),$
then
\[
\left\Vert T_{\mu_{0}}^{\nu}-T_{\mu_{1}}^{\nu}\right\Vert _{L^{2}(X,dx)}\leq c_{1}C_{P}\left\Vert \frac{\mu_{0}}{dx}-\frac{\mu_{1}}{dx}\right\Vert _{L^{2}(X,dx)},
\]
 where $C_{P}$ is the Poincaré constant on $X.$
\end{thm}

Recall that here $T_{\mu}^{\nu}$ denotes the $L^{2}-$optimal transport
map from $\mu$ to $\nu.$ The power $1/2$ in the first inequality
of the previous theorem is sharp, as explained in Section \ref{subsec:Sharpness-of-the}.
In the general case we have:
\begin{thm}
\label{thm:(Quantitative-stability--second intro}\emph{(General case)}.
There exists a constant $c_{2}$ such that 
\[
\left\Vert T_{\mu_{0}}^{\nu}-T_{\mu_{1}}^{\nu}\right\Vert _{L^{2}(X,dx)}\leq c_{2}W_{1}(\mu_{0},\mu_{1})^{1/2^{n}}
\]
 for any pair $\mu_{0},\mu_{1}\in\mathcal{P}_{ac}(X).$ More precisely,
$c_{2}$ only depends on upper bounds on the the diameters of $X$
and $Y$ and on positive lower bounds on the volume $V(Y)$ of $Y$
and $\delta(:=\sup_{Y}(\nu/dy))$ Moreover, if $\mu_{0}$ and $\mu_{1}$
have densities in $L^{2}(X),$ then 
\[
\left\Vert T_{\mu_{0}}^{\nu}-T_{\mu_{1}}^{\nu}\right\Vert _{L^{2}(X,dx)}\leq c_{3}\left\Vert \frac{\mu_{0}}{dx}-\frac{\mu_{1}}{dx}\right\Vert _{L^{2}(X,dx)}^{1/(2^{n}-1)}
\]
 for a constant $c_{3}$ dependening on the same quantitites as the
constant $c_{2}.$
\end{thm}

Since $W_{1}\leq W_{p},$ where $W_{p}$ denotes the $L^{p}$$-$Wasserstein
distance, the previous Theorems also hold with $W_{1}$ replaced by
$W_{p}.$ 

Note that Theorem \ref{thm:(Quantitative-stability--first intro}
and Theorem \ref{thm:(Quantitative-stability--second intro} imply
Theorem \ref{thm:thm H1 intro} and Theorem \ref{thm:(General-case) intro},
respectively. Indeed, even though the Monge-Ampère measure of $\phi_{h}$
is discrete and hence the push-forward $(\nabla\phi_{h})_{*}\mu_{h}$
is ill-defined, one can first apply Theorems \ref{thm:(Quantitative-stability--first intro},
\ref{thm:(Quantitative-stability--second intro} to a regularization
$\phi_{h}^{\epsilon}$ of $\phi_{h}$ (e.g. obtained by convolution)
and then let $\epsilon\rightarrow0.$ Anyhow, as explained above,
all theorems above will be deduced from the analytic inequalities
\ref{eq:analytic ineq reg intro}, \ref{eq:analytic ineq general intro}.

\subsection{Relations to computational geometry}

An important motivation for the present work comes from the recent
result in \cite{k-m-t}, showing that the vector $\boldsymbol{\phi}_{h}:=(\phi_{h}(x_{1}),...\phi_{h}(x_{N}))\in\R^{N}$
- which solves a discrete variant of the Monge-Ampère equation (with
target $Y)$ - may be effectively computed using a damped Newton iteration
on $\R^{N},$ which converges globally at a linear rate towards $\boldsymbol{\phi}_{h}$
(and locally at a quadratic rate if $g$ is Lipschitz continuous).
The iteration is defined in terms of computational geometry and the
restriction of the solution $\phi_{h}$ to the convex hull $X_{h}$
of the points $\{x_{1},...,x_{N}\}$ can then be recovered as the
piecewise affine function defined by convex hull of the discrete graph
of $\boldsymbol{\phi}_{h}.$ From this computational point of view
Theorems \ref{thm:thm H1 intro}, \ref{thm:(General-case) intro}
above yield quantitative convergence results for the corresponding
discrete objects defined on $X_{h}$ in the ``continuous limit''
when $h\rightarrow0.$ This is explained in Section \ref{sec:Formulation-in-terms}.

\subsection{The periodic setting }

Now assume that $\mu$ is a given $\Z^{n}-$periodic measure on $\R^{n}$
normalized so that its total mass on a (or equivalently any) fundamental
region $X$ is equal to one (e.g. $X=[0,1[^{n}).$ We then consider
the corresponding Monge-Ampère equation \ref{eq:ma eq with mu intro}
for a convex function $\phi$ on $\R^{n}$ subject to the condition
that $\partial\phi$ be periodic (which replaces the second boundary
condition \ref{eq:sec bd cond intro}). Such a convex function will
be called\emph{ quasi-periodic.}

In this periodic setting Theorem \ref{thm:thm H1 intro} still applies
(with $X=[0,1[^{n})$ (see Section \ref{sec:The-periodic-setting}).
In terms of optimal transport the induced diffeomorphism $T_{\phi}$
of the torus $(\R/\Z)^{n}$ transporting $\mu$ to the Riemannian
volume form on the flat torus is optimal with respect to the cost
function $d(x,y)^{2}$ where $d$ denotes the Riemannian distance
function on the flat torus \cite{c-e,hu}. 

\subsection{Proofs by complexification}

The proofs of the key analytic inequalities \ref{eq:analytic ineq reg intro}
and \ref{eq:analytic ineq general intro} use a complexification argument
to first deduce the special case when $g$ is constant from well-known
inequalities in Kähler geometry and pluripotential theory (due to
Aubin \cite{aub} and Blocki \cite{bl}, respectively). Then a separate
variational argument is used to reduce the case of a general $g$
to the special case of a constant $g.$ The universal dependence in
Theorem \ref{thm:(General-case) intro} is obtained by exploiting
that a convex body $Y$ induces a canonical toric Kähler-Einstein
metric, whose analytical properties are controlled by upper bounds
on the diameter of $Y$ and the inverse of the volume of $Y$ (thanks
to the estimates in \cite{do,kl}). 

One may wonder whether the use of complex geometry is really necessary?
In a nutshell, the complexification method has two advantages:
\begin{itemize}
\item The exterior algebra of positive forms can be can be leveraged
\item By a compactification argument (involving toric varieites) one may
perform integration by parts without boundary terms. 
\end{itemize}
(the second point is not needed in the periodic setting). The first
point can without doubt be replaced by a a suitable linear algebra
of mixed real Monge-Ampère operators and mixed discriminents, etc.
However, at least to the author, this appears to make the calculations
rather unwieldy (but see \cite{la} for a real formalism mimicking
the complex formalism). Moreover, it seems likely that the second
point can be circumvented by an appropriate choice of cut-off functions
in $\R^{n}.$ Anyhow, an important merit of the general complexifixation
method employed in this paper is that it also opens the door for direct
applications of other results in complex geometry to the second boundary
value problem for the real Monge-Ampère equation and optimal transport.
This method is, in spirit, similar to Gromov's approach \cite{gro}
to the Brunn-Minkowski and Alexander-Fenchel inequalities for convex
bodies, which also exploits the complex geometry of toric varieties.

\subsection{\label{subsec:Comparison-with-previous}Comparison with previous
results}

There seem to be no prior results giving convergence rates (in any
norms) for the functions $\phi_{h}$ or the vectors $\boldsymbol{\phi}_{h}$
in the limit when $h\rightarrow0$ (even in the model case of a uniform
grid on the torus). Another quantitative stability result for optimal
transport maps, in the regular setting, has previously been obtained
by Ambrosio (reported in \cite{gi} where it was shown to be sharp).
Translated into the present notation the result in \cite{gi} shows
that the inequality in Theorem \ref{thm:(Quantitative-stability--first intro}
holds if the $L^{\infty}-$mappings $T_{\mu_{i}}^{\nu}$ are replaced
by their inverse, i.e. by the optimal transport maps $T_{\nu}^{\mu_{i}}$
and the $L^{1}-$Wasserstein distance $W_{1}$ is replaced with the
(weaker) $L^{2}-$Wasserstein distance $W_{2}$ (see \cite[Cor 3.4]{gi}
and its proof). In other words, while Theorems \ref{thm:(Quantitative-stability--first intro},
\ref{thm:(Quantitative-stability--second intro} above concern stability
of optimal transport maps with respect to variations of the \emph{source
measure,} the result \cite[Cor 3.4]{gi} concerns variations with
respect to the \emph{target measure.} Since $T_{\nu}^{\mu}=\nabla\phi^{*},$
where $\phi^{*}$ denotes the Legendre transform of the solution $\phi$
to the second boundary value problem discussed above, the stability
result in \cite[Cor 3.4]{gi} can be used to obtain the same rates
$O(h^{1/2})$ for the $H^{1}-$norm of the difference of Legendre
transforms $\phi_{h}^{*}-\phi^{*}.$ 

In the different setting of the Oliker- Prussner discretization of
the \emph{Dirichlet problem} for the Monge-Ampère equation \cite{o-p}
with $g=1$ (where the second boundary condition \ref{eq:sec bd cond intro}
is replaced by the vanishing of the solution $\phi$ at the boundary
of $X)$ convergence rates for $L^{\infty}-$norms were established
in \cite{n-z1} at a rate $O(h^{\alpha})$ under the assumption that
the solution $\phi\in C^{2,\alpha}(X).$ The proofs are based on a
combination of discrete Alexandroff $L^{\infty}-$estimates with Brunn-Minkowski
inequalites (see also the recent work \cite{c-h-w} where an optimal
rate $O(h^{2})$ is shown, assuming bounds on the 4th order derivatives
of $\phi).$ Moreover, in \cite{n-z2} the $L^{\infty}-$rates in
\cite{n-z1} and \cite{c-h-w} were used to obtain rates for $W_{p}^{2}-$norms. 

\subsection{\label{subsec:Comparison-with-subsequent}Comparison with subsequent
developments }

After the preprint version on ArXiv of the present paper had appeared
several interesting new developments have emerged. In \cite[Theorem 2.3]{m-d-c}
the stability result for optimal transport maps in \cite[Cor 3.4]{gi}
(discussed above) was sharpend by replacing $W_{2}$ with $W_{1},$
thus providing an analog of Theorem \ref{thm:(Quantitative-stability--first intro}
when variations of the target measure are considered instead. The
proof of \cite[Theorem 2.3]{m-d-c} is based on an inequality of the
form \ref{eq:analytic ineq reg intro}, but with $\phi_{0}$ and $\phi_{1}$
replaced by their Legendre transforms $\phi_{0}^{*}$ and $\phi_{1}^{*}$
in the left hand side (the constant also depends on a strict lower
bound on $\nabla^{2}\phi,$ i.e. an upper bound on $\nabla^{2}\phi^{*};$
see \cite[Lemma 2.4]{m-d-c}). The latter inequality is shown using
an elegant convexity argument (see also \cite{a-h-a} for a different
Riemannian generalization of \cite[Cor 3.4]{gi}). Moreover, for general
$\mu,$ but with the density of $\nu$ assumed constant, an analog
of the first inequality in Theorem \ref{thm:(Quantitative-stability--second intro}
is established, when variations of the target measure are considered;
see \cite[Theorem 3.1]{m-d-c}. The power of $W_{1}$ obtained in
\cite[Theorem 3.1]{m-d-c} is equal to $2/15.$ Remarkly, the power
is thus independent of the dimension $n$ (which is important for
the applications to machine learning considered in \cite{m-d-c}).
Moreover, an analog of the second inequality in Theorem \ref{thm:(Quantitative-stability--second intro}
is obtained with the $L^{2}-$norm in the right hand side replaced
by an $L^{1}-$norm (i.e. the total variation distance) raised to
the power $1/5.$ One is thus naturally led to ask whether there is
a relation between the quantitative stability with respect to variations
of the target and the source, respectively? The answer is affirmative,
as follows from arguments in the appendix of \cite{m-d-c}. However,
when passing from one type of the inequalites the argument unfortunately
dimishes the exponent of $W_{1};$ it gets divided by $(n+2)$ (hence,
the exponent in Theorem \ref{thm:(Quantitative-stability--second intro}
is larger than the one implied by \cite[Theorem 3.1]{m-d-c}, as long
as $n\leq5).$ This is briefly explained in Section \ref{subsec:Variations-with-respect}. 

In the very recent work \cite{l-n} a variant of the $H^{1}-$estimate
in Theorem \ref{thm:thm H1 intro} is obtained (see \cite[Cor 4.2, formula 4.8]{l-n}).
The main difference between the $H^{1}-$estimate in Theorem \ref{thm:thm H1 intro}
and the one in \cite{l-n} is that in \cite{l-n} the values in $Y$
of $\nabla\phi_{h},$ which are constant on the facets of the weighted
Delaunay tesselation of $X,$ induced by $\phi_{h},$ are replaced
by the barycenters of the the facets $\partial\phi_{h}(x_{i})$ of
the dual tesselation of $Y.$ Moreover, while the constant in the
estimate in Theorem \ref{thm:thm H1 intro} depends on a strictly
lower bound on $\nabla^{2}\phi$ the constant in \cite{l-n} depends
on an \emph{upper} bound of $\nabla^{2}\phi$. The proof in \cite{l-n}
is based on a generalization of the quantitative stability result
for the optimal transport map $T_{\nu}^{\mu}$ in \cite[Cor 3.4]{gi}
(discussed above) to optimal transport plans. The results in \cite{l-n}
also hold for fully-discrete approximation schemes (see \cite[Thm 5.1]{l-n}).
See the end of Section 4 in \cite{l-n} for a detailed comparison
between Theorem \ref{thm:thm H1 intro} (and Theorem \ref{thm:computional geometr})
and the results in \cite{l-n}.

\subsection{Organization}

We start in Section \ref{sec:Preliminaries} with preliminaries from
convex and complex analysis. Since the complex analytic side may not
be familiar to some readers a rather thorough presentation is provided.
In Section \ref{sec:Proof-of-Theorems} the theorems stated in the
introduction are proven, starting with the special case when the density
$g$ is constant on $Y$ and finally reducing to the special case.
In Section \ref{subsec:Relations-to-quantative} the relations to
quantitative stability of optimal transport maps are spelled out.
The relations to computational geometry are explained in Section \ref{thm:computional geometr},
based on Prop \ref{prop:structure of sol over X h}. In the final
section the periodic setting is considered.

\subsection{Acknowledgments}

I am grateful to Quantin Mérigot for illuminating comments and, in
particular, for drawing my attention to the paper \cite{gi}. Thanks
also to the referees whose comments helped to improve the exposition.
This work was supported by grants from the KAW foundation, the Swedish
Research Council and the Göran Gustafsson foundation.

\section{\label{sec:Preliminaries}Preliminaries}

\subsection{\label{subsec:Recap-of-the}Convex analytic notions}

Given a convex function $\phi$ on $\R^{n}$ taking values in $]-\infty,\infty]$
(and not identically equal to $\infty)$ we denote by $\partial\phi$
its\emph{ subgradient, }i.e. the set-valued function on $\R^{n}$
defined by
\[
\partial\phi(x_{0}):=\left\{ y_{0}\in\R^{n}:\,\phi(x_{0})+\left\langle y_{0},x-x_{0}\right\rangle \leq\phi(x)\,\forall x\in\R^{n}\right\} 
\]
(in particular, if $\phi(x_{0})=\infty,$ then $\partial\phi(x_{0})=\emptyset).$
The gradient $(\nabla\phi)(x)$ of a convex function exists a.e. on
the set $\{\phi<\infty\}$ and defines a $L_{loc}^{\infty}-$map into
$\R^{n}$ (called the Brenier map). The \emph{Monge-Ampère measure
}of $\phi$ (in the sense of Alexandrov) is defined by 
\begin{equation}
\int_{B}MA(\phi):=\int_{\partial\phi(B)}dy,\label{eq:MA phi on B}
\end{equation}
for any Borel subset $B$ of $\R^{n}.$ This yields a well-defined
measure on $\R^{n}.$ Indeed, introducing the Legendre transform of
$\phi(x),$ i.e. the convex function $\phi^{*}$ defined by 
\[
\phi^{*}(y):=\sup_{x\in\R^{n}}\left(y\cdot x-\phi(x)\right),
\]
the following formula holds:
\begin{equation}
MA(\phi)=(\nabla\phi^{*})_{*}dy,\label{eq:MA phi as push}
\end{equation}
 i.e. $MA(\phi)$ is the measure obtained as the push-forward of the
Lebesgue measure $dy$ under the $L_{loc}^{\infty}-$map $\nabla\phi^{*}$(the
formula follows from point 2 and 3 below). 

We recall the following basic properties which hold for a given lower
semi-continuous (lsc) convex function $\phi:\R^{n}\rightarrow]-\infty,\infty]$:
\begin{enumerate}
\item $\phi^{**}=\phi$
\item $y\in\partial\phi(x)\iff x\in\partial\phi^{*}(y)$
\item $\overline{(\partial\phi)(\R^{n})}=\overline{\{\phi^{*}<\infty\}}$
(which is a convex set)
\item $\overline{(\partial\phi)(\R^{n})}=\overline{(\partial\phi)(\text{supp\ensuremath{(MA(\phi)}})}$ 
\item If $y\in(\partial\phi)(\R^{n}),$ i.e. $y\in\partial\phi(x_{y})$
for some $x_{y}\in\R^{n},$ then 
\[
\phi^{*}(y)=\sup_{x\in(\partial\phi)(\R^{n})}x\cdot y-\phi(x)=x_{y}\cdot y-\phi(x_{y})
\]
\item The function $\phi$ is \emph{piecewise affine on $\R^{n}$} if and
only if it is the max of a finite number of affine functions, i.e.
there exists $y_{1},...,y_{M}\in\R^{n}$ (assumed distinct) and $c_{1},...,c_{M}$
in $\R$ such that
\[
\phi(x)=\max_{\{y_{i}\}}x\cdot y_{i}-c_{i}
\]
\item Denote by $F_{i}$ the closure of a maximal open region where the
piecewise affine function $\phi$ is affine. Then we can label $F_{i}$
such that $\nabla\phi(x)=y_{i}$ on the interior of $F_{i}.$ Moreover,
the corresponding covering of $\R^{n}$ defines a polyhedral cell-complex
with facets $F_{i}.$
\item Denote by $x_{i},...,x_{N}$ the $0-$dimensional cells (i.e. vertices)
of the polyhedral cell-complex above. Then 
\[
\text{supp\ensuremath{(MA(\phi)}})=\{x_{1},...,x_{N}\}
\]
 and $\partial\phi(x_{i})$ is the convex hull of the vectors $y_{j}$
associated to all facets $F_{j}$ containing $x_{i}.$
\end{enumerate}
A reference for point 1-5 is \cite{v1} and 6 could be taken as the
definition of a convex piecewise affine function and then point 7
follows readily. As for point 8 it can be shown using the following
observation: $x$ is not in a $0-$dimensional cell if and only if
the convex hull $C_{x}(=\partial\phi(x))$ of the vectors $y_{i}$
corresponding to the facets $F_{i}$ containing $x$ has dimension
$p<n$ (as can be seen by identifying the $F_{i}$s with intersecting
pieces of hyperplanes in the graph of $\phi$ in $\R^{n+1}$ and $C_{x}$
with the corresponding normal cone at $(x,\phi(x)).$ As a consequence,
if $x$ is not in a $0-$dimensional cell, then there exists a whole
neighborhood $U$ of $x$ having the latter property and hence $(\partial\phi)(U)$
is a null-set for Lebesgue measure, i.e. $MA(\phi)=0$ in $U.$ This
shows that the support of $MA(\phi)$ is contained in $\{x_{1},...,x_{n}\}.$
Conversely, if $x$ is contained in the latter set, then $p=0$ and
hence $(\partial\phi)(x)$ has dimension $n,$ i.e. $MA(\phi)\{x\}\neq0,$ 

\subsubsection{\label{subsec:The-class-}The class $\mathcal{C}_{Y}(\R^{n})$ of
convex functions associated to a bounded convex domain $Y$}

Given a bounded convex domain $Y$ we denote by $\mathcal{C}_{Y}(\R^{n})$
the space of all convex functions on $\R^{n}$ such that $(\partial\phi)(\R^{n})\subset\overline{Y}.$
A reference element in $\mathcal{C}_{Y}(\R^{n})$ is provided by the\emph{
support function} of $Y:$
\begin{equation}
\phi_{Y}(x):=\sup_{y\in Y}x\cdot y\label{eq:def of support f}
\end{equation}
A function $\phi$ is in $\mathcal{C}_{Y}(\R^{n})$ if and only if
there exists a constant $C$ such that $\phi\leq\phi_{Y}+C$ (as follows
from properties 3 and 5 in the previous section). A leading role in
the present paper will be played by the subspace $\mathcal{C}_{Y}(\R^{n})_{+}$
of $\mathcal{C}_{Y}(\R^{n})$ consisting of the convex functions $\phi$
on $\R^{n}$ with ``maximal growth'' in the sense that the reversed
version of the previous inequality also holds: 
\[
\mathcal{C}_{Y}(\R^{n})_{+}:=\left\{ \phi\,\text{convex\,on \ensuremath{\R^{n}\,\,\text{and\,}}}\phi-\phi_{Y}\in L^{\infty}(\R^{n})\right\} 
\]
In particular, $\phi_{Y}$ is in $\mathcal{C}_{Y}(\R^{n})_{+}.$ If
$\phi\in\mathcal{C}_{Y}(\R^{n})_{+},$ then $MA(\phi)/V(Y)$ is a
probability measure (by point 3 in the previous section). The converse
is not true in general, but we will have great use the fact that if
$MA(\phi)/V(Y)$ is a probability measure and moreover $MA(\phi)$
has compact support, then $\phi$ is in $\mathcal{C}_{Y}(\R^{n})_{+},$
as follows from the following lemma: 
\begin{lem}
\label{lem:l infty bound in terms of Sob}Assume that $\phi$ is in
$\mathcal{C}_{Y}(\R^{n})$ and $MA(\phi)/V(Y)$ is a probability measure.
If $\phi$ is normalized so that $\sup_{\R^{n}}(\phi-\phi_{Y})=0$
(or equivalently, $\inf_{Y}\phi^{*}=0$), then for any $q>n,$
\[
\left\Vert \phi-\phi_{Y}\right\Vert _{L^{\infty}(\R^{n})}\leq\frac{d(Y)}{V(Y)}\int_{\R^{n}}|x|MA(\phi)+C_{n,q}\frac{d(Y)^{\left(1+n(1-1/q)\right)}}{V(Y)}\int_{\R^{n}}|x|^{q}MA(\phi),
\]
where $C_{n,q}$ only depends on $n$ and $q$ and $d(Y)$ and $V(Y)$
denote the diameter and volume of $Y,$ respectively. 
\end{lem}

\begin{proof}
Following the argument in the proof of \cite[Prop 2.2]{be-be} we
denote by $v:=\phi^{*}$ is the Legendre transform of $\phi,$ which
defines a convex function on the interior of $Y.$ By the Sobolev
inequality for the embedding $W^{1,q}(Y)\Subset L^{\infty}(Y)$ we
have, since the interior of $Y$ is a bounded convex domain,
\[
\sup_{Y}|v|\leq\frac{1}{V(Y)}\int_{Y}|v(y)|dy+C_{n,q}\frac{\text{diam}(Y)^{\left(1+n(1-1/q)\right)}}{V(Y)}\left(\int_{Y}|\nabla v(y)|^{q}dy\right)^{1/q}
\]
(see \cite[Lemma 1.7.3]{da} or \cite[Thm 4.4]{m-t-s-o}). In general,
as explained in \cite[Prop 2.2]{be-be}, $\inf_{Y}v=-\sup(\phi-\phi_{Y})$
and hence, by assumption, $\inf_{P}v=0.$ Assume that the infimum
is attained at $y_{0}\in Y,$ i.e. $v(y_{0})=0.$ By convexity $|v(y)|=v(y)-v(y_{0})\leq\nabla v(y)\cdot(y-y_{0}).$
Thus the Cauchy-Schwartz inequality yields 
\[
\text{\ensuremath{\int_{Y}|v(y)|dy\leq d(Y)}}\int_{Y}|\nabla v|dy.
\]
Finally, the proof is concluded by observing that $\int|\nabla v(y)|^{\alpha}dy=\int|x|^{\alpha}MA(\phi)$
for any $\alpha>0$ and
\begin{equation}
\left\Vert \phi-\phi_{Y}\right\Vert _{L^{\infty}(\R^{n})}=\left\Vert \phi^{*}\right\Vert _{L^{\infty}(Y)}\label{eq:Legendre isometry}
\end{equation}
 (which follows directly from the fact that the transformation $\phi\mapsto\phi^{*}$
is decreasing and involutive).
\end{proof}

\subsubsection{\label{subsec:The-2nd-boundary}The 2nd boundary value problem for
the Monge-Ampère equation}

We next recall some basic properties of the second boundary value
problem for the Monge-Ampère equation, introduced in Section \ref{subsec:Background intro}.
We thus let $X$ and $Y$ be bounded domains in $\R^{n},$ with $Y$
bounded and convex. Given a function $g$ with support $Y$ such that
$g\in L^{1}(Y,dy)$ the corresponding ``$g-$Monge-Ampère measure''
$MA_{g}(\phi)$ is defined by 
\[
\int_{B}MA(\phi):=\int_{\partial\phi(B)}gdy,
\]
 (see \cite[Section 3]{m-t-w} for a more general setting involving
a cost function $c).$ In particular, if $\phi$ is smooth than $MA_{g}(\phi)$
is given by formula \ref{eq:formul for MA g intro}. In the case when
$gdy$ has unit integral we denote by $\nu$ the corresponding probability
measure: 
\[
\nu:=gdy
\]

\begin{lem}
\label{lem:uniqueness and extension}Given a probability measure $\mu$
with compact support contained in $X,$ a solution $\phi_{X}$ to
the corresponding second boundary value problem \ref{eq:ma eq with mu intro},
\ref{eq:sec bd cond intro} exists and is uniquely determined (mod
$\R).$ Moreover, $\phi_{X}$ is equal to the restriction to $X$
of $\phi_{\R^{n}}$ (mod $\R).$ 
\end{lem}

\begin{proof}
This result goes back to Alexandrov's classical work on Monge-Ampère
equations, but for the convenience of the reader we show here how
to deduce the result from Brenier's theorem \cite{br}. Given a closed
subset $F$ in $\R^{n}$ denote by $\chi_{F}$ the functions which
is equal to $0$ on $F$ and $\infty$ on the complement of $F.$
In the proof we will refer to point $1-8$ in Section \ref{subsec:Recap-of-the}.
Set $\psi_{X}:=(\chi_{\overline{X}}+\phi_{X})^{*}$ (where we have
used that $\phi_{X}$ extends uniquely to a Lipschitz continuous function
on the closure of $X).$ Since $\mu$ and $\nu$ have the same mass
the second boundary condition \ref{eq:sec bd cond intro} implies
that $\overline{\partial\phi_{X}(\R^{n})}=\bar{Y.}$ Hence $\psi_{X}$
is finite on $Y$ (by point 3) and the MA-equation \ref{eq:ma eq with mu intro}
is equivalent to 
\begin{equation}
\mu=(\nabla\psi_{X})_{*}\nu.\label{eq:transport eq in pf lemma}
\end{equation}
By Brenier's uniqueness theorem \cite{br} the latter equation determines
the $L^{\infty}-$map $\nabla\psi_{X}$ a.e $\nu.$ Since the support
$Y$ of $\nu$ is connected the restriction $\psi$ of $\psi_{X}$
to $Y$ is thus uniquely determined (mod $\R).$ Now, by point 1 $\chi_{\overline{X}}+\phi_{X}=\psi_{X}^{*}$
and since $\overline{\partial(\chi_{\overline{X}}+\phi_{X})(\R^{n})}\Subset\bar{Y}$
we get $\chi_{\overline{X}}+\phi_{X}=(\chi_{\overline{Y}}+\psi_{X})^{*}$
(by point 5). In particular, on $X$ we have $\phi_{X}=(\chi_{\overline{Y}}+\psi)^{*},$
where $\psi,$ as explained above, is independent of $X.$ Hence,
replacing $X$ with $\R^{n}$ reveals that the corresponding two solutions
coincide on $X$ (mod $\R)$ as desired. Also note that Brenier's
existence theorem says that, given $\mu$ and $\nu,$ there exists
some convex function $\psi$ on $Y$ satisfying the equation \ref{eq:transport eq in pf lemma}.
Hence defining $\phi_{X}$ in terms of $\psi$ as above yields the
existence of a solution to the second boundary value problem. 
\end{proof}
The uniqueness property in the previous lemma implies the follow qualitative
stability result:
\begin{prop}
\label{prop:qual stab}(Qualitative stability) Using the setup in
the previous lemma let $\phi_{h}$ be a solution to the 2nd boundary
value problem obtained by replacing $\mu$ with $\mu_{h}$ where $\mu_{h}$
is a family of probability measures on $X$ converging weakly towards
$\mu$ as $h\rightarrow0.$ Then $\nabla\phi_{h}$ converges weakly
towards $\nabla\phi_{X}$ (more precisely, all the distributional
derivatives of $\phi_{h}$ converge weakly towards the distributional
derivatives of $\phi_{X}).$ 
\end{prop}

\begin{proof}
Let $\phi_{h}$ be the convex continuous solution on $\R^{n}$ normalized
by $\phi_{h}(x_{0})=0$ for a fixed point $x_{0}\in X.$ Since $\partial\phi_{h}\subset Y$
and $Y$ is assumed bounded it follows from the Arzela-Ascoli compactness
theorem that there exists a subsequence $\phi_{h_{j}}$ converging
uniformly to a convex function $\phi_{\infty}$ on $X$ as $j\rightarrow\infty.$
By standard continuity properties of Monge-Ampère operators it follows
that $MA_{\nu}(\phi_{\infty})=\mu$ (see \cite[Corollary 3.1]{m-t-w}
for a more general result). But then we can apply the uniqueness property
in the previous theorem to conclude that $\phi_{\infty}=\phi,$ where
$\phi$ is the unique solution to \ref{eq:ma eq with mu intro}, \ref{eq:sec bd cond intro}
on $\R^{n}$ satisfying $\phi(x_{0})=0$ and hence the whole family
$\phi_{h}$ converges locally uniformly on $\R^{n}$ towards $\phi.$
The last statement then follows directly from the definition of distributional
derivatives.
\end{proof}

\subsection{\label{subsec:Recap-of-complex}Complex analytic notions}

For the benefit of the reader lacking background in complex analysis
and geometry we provide a (hopefully user-friendly) recap of some
complex analytic notions (see the book \cite{de} for further general
background and \cite{be-be} for the case of toric varieties). Setting
$z:=x+iy\in\C^{n}$ the space $\Omega^{1}(\C^{n})$ of all complex
one-forms on $\C^{n}$ decomposes as a sum
\begin{equation}
\Omega^{1}(\C^{n})=\Omega^{1,0}(\C^{n})+\Omega^{0,1}(\C^{n}),\label{eq:decomp of Omega one}
\end{equation}
of the two subspaces spanned by $\{dz_{i}\}$ and $\{d\bar{z}_{i}\},$
respectively. This induces a decomposition of the exterior algebra
of all complex differential forms $\Omega^{\cdot}(\C^{n})$ into forms
of bidegree $(p,q),$ where $p\leq n$ and $q\leq n.$ Accordingly,
the exterior derivative $d$ decomposes as $d=\partial+\bar{\partial},$
where 

\[
\partial\phi:=\sum_{i=1}^{n}\frac{\partial\phi}{\partial z_{i}}dz_{i},\,\,\,\frac{\partial}{\partial z_{i}}:=(\frac{\partial}{\partial x_{i}}-i\frac{\partial}{\partial y_{i}})/2,
\]
 and taking its complex conjugate defines the $(0,1)-$form $\bar{\partial}\phi.$
In particular, 
\begin{equation}
\omega^{\phi}:=\frac{i}{2\pi}\partial\bar{\partial}\phi=\frac{i}{2\pi}\sum_{i,j\leq n}\frac{\partial^{2}\phi}{\partial z_{i}\partial\bar{z}_{j}}dz_{i}\wedge d\bar{z}_{j},\label{eq:omega phi}
\end{equation}
 defines a real $(1,1)-$form (this normalization turns out to be
useful, as illustrated by Example \ref{ex:normalization in C} \ref{ex:normalization in C}
below). Such a smooth form $\omega$ is said to be positive (Kähler),
written as $\omega\geq0$ $(\omega>0),$ if the corresponding Hermitian
matrix is semi-positive (positive definite) at any point. Positivity
can also be defined for general $(p,p)-$forms, but for the purpose
of the present paper it is enough to know that 
\[
\omega_{i}\geq0,\,\,\,\,i=1,..,n\implies\omega_{1}\wedge\cdots\wedge\omega_{n}\geq0,
\]
 where the last inequality holds in the sense of measures.

If $\phi$ is smooth, then $\phi$ is said to be \emph{plurisubharmonic
(psh)} if $\omega^{\phi}\geq0.$ A general function $\phi\in L_{loc}^{1}$
is said to be psh if it is strongly upper semi-continuous and $\omega^{\phi}\geq0$
holds in the weak sense of currents. 
\begin{example}
\label{ex:normalization in C}The normalization used in the definition
of $\omega^{\phi}$ ensures that when $n=1$ the measure $\omega^{\phi}$
on $\C$ is the Dirac measure at $0\in\C$ in the case when $\phi=\log|z|^{2}.$
\end{example}

More generally, given a real $(1,1)-$form $\omega_{0}$ a function
$u$ said to be\emph{ $\omega_{0}-$psh }if 
\[
\omega_{u}:=\omega_{0}+\frac{i}{2\pi}\partial\bar{\partial}u\geq0
\]
If $\omega_{0}:=\omega^{\phi_{0}}$ this means that $\phi$ is psh
if and only if $u:=\phi-\phi_{0}$ is $\omega_{0}-$psh. When $\phi_{i}$
for $i=1,...,p$ is psh and in $L_{loc}^{\infty}$ the positive closed
$(p,p)-$currents 
\[
\omega^{\phi_{1}}\wedge\cdots\wedge\omega^{\phi_{p}},
\]
is defined by the local pluripotential theory of Bedford-Taylor. The
current does not charge pluripolar subsets (i.e sets locally contained
in the $-\infty-$locus of a psh function) and in particular not analytic
subvarieties. Accordingly, the Monge-Ampère measure of a locally bounded
psh function $\phi(z),$ is defined by the measure
\begin{equation}
MA_{\C}(\phi):=(\omega^{\phi})^{n}/n!.\label{eq:def of complex MA}
\end{equation}
We also recall that, if $\phi_{0}$ and $\phi_{1}$ are as above then
the positive current $i\partial(\phi_{0}-\phi_{1})\wedge\bar{\partial}(\phi_{0}-\phi_{1})$
may be defined by the formula 
\[
\partial\varphi\wedge\bar{\partial}\varphi:=-\varphi\partial\bar{\partial}\varphi+\partial\bar{\partial}\varphi^{2},\,\,\,\varphi:=\phi_{0}-\phi_{1}
\]

\subsubsection{Metrics on line bundles and $\omega_{0}-$psh functions}

The local complex analytic notions above naturally extend to the global
setting where $\C^{n}$ is replaced by a complex manifold (since the
decomposition \ref{eq:decomp of Omega one} is invariant under a holomorphic
change of coordinates). However, if $X$ is compact, then all psh
functions $\phi$ on $X$ are constant (by the maximum principle).
Instead the role of a (say, smooth) psh function $\phi$ on $\C^{n}$
is played by a positively curved metric on a line bundle $L\rightarrow X$
(using additive notations for metrics). To briefly explain this first
recall that a line bundle $L$ over a complex manifold $X$ is, by
definition, a complex manifold (called the total space of $L)$ with
a surjective holomorphic map $\pi$ to $X$ such that the fibers $L_{x}$
of $\pi$ are one-dimensional complex vector spaces and such that
$\pi$ is locally trivial. In other words, any point $x\in X$ admits
a neighborhood $U$ such that $\pi:\,L\rightarrow U$ is (equivariantly)
isomorphic to the trivial projection $U\times\C\rightarrow U.$ Fixing
such an isomorphism, holomorphic sections of $L\rightarrow U$ may
be identified with holomorphic functions on $U.$ In particular, the
function $1$ over $U$ corresponds to a non-vanishing holomorphic
section $s_{U}$ of $L\rightarrow U.$ Now, a smooth (Hermitian) metric
$\left\Vert \cdot\right\Vert $ on the line bundle $L$ is, by definition,
a smooth family of Hermitian metrics on the one-dimensional complex
subspaces $L_{x},$ i.e. a one-homogeneous function on the total space
of the dual line bundle $L^{*}$ which vanishes precisely on the zero-section.
Given a covering $U_{i}$ of $X$ and trivializations of $L\rightarrow U_{i}$
a metric $\left\Vert \cdot\right\Vert $ on $L$ may be represented
by the following family of local functions $\phi_{U_{i}}$ on $U_{i}:$
\[
\phi_{U_{i}}:=-\log\left\Vert s_{U_{i}}\right\Vert ^{2}
\]
(accordingly a metric on $L$ is often, in additive notation, denoted
by the symbol $\phi).$ Even if the functions $\phi_{U_{i}}$ do not
agree on overlaps, the (normalized) curvature form $\omega$ of the
metric $\left\Vert \cdot\right\Vert $ is a well-defined closed two-form
on $X,$ locally defined by 
\[
\omega_{|U_{i}}:=\omega^{\phi_{U_{i}}}
\]
Singular metrics on $L$ may be defined in a similar way. In particular,
a singular metrics is said to have \emph{positive curvature} if the
local functions $\phi_{U_{i}}$ are psh, i.e. the corresponding curvature
form $\omega$ defines a positive $(1,1)-$current on $X.$ The difference
of two metrics, written as $\phi_{1}-\phi_{2}$ in additive notation,
is always a globally well-defined function on $X$ (as a consequence,
the curvature currents of any two metrics on $L$ are cohomologous
and represent the first Chern class $c_{1}(L)\in H^{2}(X,\Z)).$ Fixing
a reference metric $\phi_{0}$ and setting $u:=\phi-\phi_{0}$ this
means that the space all metrics $\phi$ on $L$ with positive curvature
current may be identified with the space of all $\omega_{0}-$psh
functions $u$ on $X.$
\begin{example}
\label{exa:proj space and varieties}The $m-$dimensional \emph{complex
projective space} $\P^{m}:=\C^{m+1}-\{0\}/\C^{*})$ comes with a tautological
line bundle whose total space is $\C^{m+1}$ and the line over a point
$[Z_{0}:....:Z_{m}]\in\P^{m}$ is the line $\C(Z_{0},..,Z_{m}).$
The dual of the tautological line bundle is called the \emph{hyperplane
line bundle }and is usually denoted by $\mathcal{O}(1)$ (the notation
reflects the fact that the metrics on $\mathcal{O}(1)$ may be identified
with $1-$homogeneous functions on $\C^{m+1}).$ The Euclidean metric
on $\C^{m+1}$ induces a metric on the tautological line bundle and
hence on its dual $\mathcal{O}(1),$ called the \emph{Fubini-Study
metric.} In the standard affine chart $U:=\C^{n}\subset\P^{n}$ defined
by all points where $z_{0}\neq0$ and with the standard trivializing
section $s_{U}:=(1,z_{1},...,z_{n})$ of the tautological line bundle
the Fubini-Study metric is represented by 
\[
\phi_{FS}(z):=\log\left\Vert s_{U}\right\Vert ^{2}=\log(1+|z|^{2}),
\]
 defining a smooth metric with strictly positive curvature form. Another
$T^{n}-$invariant metric on $\mathcal{O}(1)$ with positive curvature
current, which is continuous, but not smooth, is the ``max-metric''
defined by the one-homogeneous psh function $\max\{|Z_{0}|,...,|Z_{n}|\},$
which may be represented by 
\[
\phi_{max}(z)=\log\max\{1,|z_{1}|^{2},...,|z_{n}|^{2}\},
\]
 Given any complex subvariety $X\Subset\P^{m}$ one obtains a line
bundle $L$ over $X$ by restricting $\mathcal{O}(1)$ to $X$ (and
a smooth positively curved metric $\phi$ by restricting the Fubini-Study
metric). If $X$ is singular then, by Hironaka's theorem it admits
a smooth resolution, i.e. a smooth compact manifold $X'$ with surjective
and generically one-to-one projection $\pi'$ to $X.$ Pulling back
$L$ by $\pi'$ yields a line bundle $L'$ over $X'$ (endowed with
a smooth metric $\phi').$ 
\end{example}

\subsubsection{\label{subsec:The-toric-variety}The toric variety associated to
a moment polytope $Y$ }

Let $Y$ be a bounded closed convex polytope with non-empty interior
and rational vertices. It determines a compact toric complex analytic
variety $X_{\C}$ and an ample line bundle $L$ over $X_{\C}.$ More
precisely, this is the case if the rational polytope $Y$ is replaced
by the integer polytope $kY$ for a sufficiently large positive integer
$k.$ But since scalings of $Y$ will be harmless we may as well assume
that $k=1.$ Then $X_{\C}$ may be defined as the closure of the image
of the holomorphic (algebraic) embedding defined by 
\begin{equation}
\C^{*n}\rightarrow\P^{M-1}\,\,\,\,z\mapsto[z^{m_{0}}:\cdots:z^{m_{M-1}}],\label{eq:toric emb}
\end{equation}
 using multinomial notation, where $m_{0},...,m_{M-1}$ label the
integer vectors in $Y.$ The line bundle $L$ is then simply defined
as the restriction to $X_{\C}$ of the hyperplane line bundle $\mathcal{O}(1)$
on $\P^{M-1}.$ By construction, the following holds:
\begin{itemize}
\item $X_{\C}$ may be embedded in a complex projective space $\P^{N}$
in such a way that $L$ coincides with the restriction to $X_{\C}$
of the hyperplane line bundle $\mathcal{O}(1)$ on $\P^{N}.$ 
\item The standard action of the real $n-$dimensional torus $T^{n}$ on
$\C^{*n}$ extends to a holomorphic action of $X_{\C}$ which lifts
to the line bundle $L.$
\end{itemize}
By the first point above we can identify $\C^{*n}$ with an open dense
subset of $X_{\C},$ whose complement in $X_{\C}$ is an analytic
subvariety. 

Now, the key point is that the function $\phi(x)$ is in the class
$\mathcal{C}_{Y}(\R^{n})$ (Section \ref{subsec:The-class-}) if and
only if $\phi(x)$ extends to a positively curved (singular) metric
on $L\rightarrow X_{\C}.$ More precisely, we have the following \cite[ Prop 3.2]{be-be}
\begin{lem}
\label{lem:conv function as metric}A convex function $\phi(x)$ such
that $\mathcal{C}_{Y}(\R^{n})_{+}$ may be identified with a $L^{\infty}-$metric
on the line bundle $L\rightarrow X_{\C}$ with positive curvature
current $\omega^{\phi}.$
\end{lem}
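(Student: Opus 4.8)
The plan is to use the Fubini--Study metric as a fixed reference point on $L$ and to reduce the whole statement to an extension property for bounded quasi-plurisubharmonic functions across the boundary divisor of the torus. After rescaling $Y$ by a large integer — harmless for the statement, since all the objects involved simply rescale — I may assume $Y$ is an integer polytope, so its vertices lie among the lattice points $m_0,\dots,m_{M-1}\in Y$ labeling the toric embedding \ref{eq:toric emb}. Restricting the Fubini--Study metric on $\mathcal{O}(1)$ to $X_{\C}$ gives a smooth positively curved reference metric on $L$ which, in the natural trivialization of $L$ over the open dense torus $\C^{*n}\subset X_{\C}$ and after the substitution $x=\mathrm{Log}(z)$, is represented by the $T^{n}$-invariant convex function $\phi_{FS}(x)=\log\sum_{i}e^{\langle m_{i},x\rangle}$. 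Since $\max_{i}\langle m_{i},x\rangle=\phi_{Y}(x)$ — the supremum of a linear functional over an integer polytope is attained at a lattice vertex — one has $\phi_{Y}\le\phi_{FS}\le\phi_{Y}+\log M$, so $\phi_{FS}\in\mathcal{C}_{Y}(\R^{n})_{+}$. Writing $\omega_{0}:=\omega^{\phi_{FS}}$ for its curvature form, metrics on $L$ with positive curvature current correspond bijectively to $\omega_{0}$-psh functions $u$ on $X_{\C}$, with $L^{\infty}$-metrics corresponding to bounded $u$; averaging $u$ over $T^{n}$ shows it suffices to treat $T^{n}$-invariant metrics.

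The easy implication is ($\Leftarrow$). Given a $T^{n}$-invariant $L^{\infty}$-metric $\phi$ on $L$ with positive curvature current, restrict it to $\C^{*n}$ and push it down via $\mathrm{Log}$ to a function $\phi(x)$ on $\R^{n}$. Positivity of the curvature says exactly that $\phi(\mathrm{Log}\,z)$ is psh on $\C^{*n}$, hence (Sections \ref{subsec:Complex-vs-real} and \ref{subsec:The-complex-torus}) $\phi(x)$ is convex; and boundedness of $\phi-\phi_{FS}$ on $X_{\C}$ forces $\phi-\phi_{FS}\in L^{\infty}(\R^{n})$, so $\phi\in\mathcal{C}_{Y}(\R^{n})_{+}$ because $\phi_{FS}$ is.

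For ($\Rightarrow$), take $\phi\in\mathcal{C}_{Y}(\R^{n})_{+}$. Both $\phi$ and $\phi_{FS}$ are comparable to $\phi_{Y}$, so $u(x):=\phi(x)-\phi_{FS}(x)\in L^{\infty}(\R^{n})$. Pulled back by $\mathrm{Log}$ this is a bounded $T^{n}$-invariant function on $\C^{*n}$ which is $\omega_{0}$-psh there, since $\phi(\mathrm{Log}\,z)$ is psh. Now $X_{\C}\setminus\C^{*n}$ is an analytic subvariety, and I would invoke the extension theorem for bounded quasi-psh functions across analytic subsets: passing first, if $X_{\C}$ is singular, to a smooth resolution $\pi':X'\to X_{\C}$ (on which the total transform of the boundary together with the exceptional locus is still analytic) and descending afterwards, the upper semicontinuous regularization of $\limsup u$ extends $u$ to a bounded $\omega_{0}$-psh function $\bar u$ on $X_{\C}$. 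Then $\phi_{FS}^{X_{\C}}+\bar u$ is the desired $L^{\infty}$-metric on $L$ with positive curvature current, and it restricts to $\phi$ on $\C^{*n}$ by construction; the two assignments are mutually inverse, giving the claimed identification.

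The main obstacle is this extension step: one must know that a bounded $\omega_{0}$-psh function on the complement of the boundary divisor genuinely extends to one on all of $X_{\C}$, with the curvature current of the extended metric still positive \emph{as a current on $X_{\C}$} — i.e. acquiring no negative mass along the boundary. Boundedness is exactly what makes this work, since bounded psh functions have vanishing Lelong numbers and put no mass on pluripolar sets, so a Grauert--Remmert type extension applies and the Bedford--Taylor curvature current does not jump; this is also precisely where the hypothesis $\mathcal{C}_{Y}(\R^{n})_{+}$ (maximal growth, hence $\phi-\phi_{Y}$ bounded) rather than merely $\mathcal{C}_{Y}(\R^{n})$ enters. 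A secondary technical point is the reduction to the smooth case via Hironaka's resolution, and the rationality of the vertices of $Y$, which is what produces the toric variety $X_{\C}$ and the line bundle $L$ to begin with.
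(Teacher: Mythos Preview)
Your proof is correct and follows essentially the same strategy as the paper's: fix a reference positively curved $L^{\infty}$-metric on $L$ whose local representative on $\C^{*n}$ differs from $\phi_{Y}$ by a bounded function, translate between convex functions in $\mathcal{C}_{Y}(\R^{n})_{+}$ and bounded $\omega_{0}$-psh functions on the open torus, and then invoke the extension of bounded psh functions across the analytic boundary divisor. The only difference is cosmetic: the paper takes the \emph{max metric} (represented by $\phi_{Y}$ itself) as the reference, whereas you take the Fubini--Study metric $\phi_{FS}$; since you verify $\phi_{Y}\le\phi_{FS}\le\phi_{Y}+\log M$, the two choices are interchangeable, and your detour through a smooth resolution for the extension step is not needed in the paper's argument, which simply appeals to local extension of bounded psh functions on the ambient projective space.
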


\begin{proof}
Since the lemma will play a key role in the proof of the main results
we recall, for the benefit of the reader, the simple proof. Without
loss of generality we can assume that $m_{0}:=0$ is in $Y.$ By construction
the ``max metric'' on $\mathcal{O}(1)\rightarrow\P^{M-1}$ restricts
to a continuous (and in particularly bounded) positively curved metric
on $L\rightarrow X_{\C}.$ The map \ref{eq:toric emb} may be factored
as $\C^{*n}\rightarrow\C^{N}\rightarrow\P^{M-1}.$ Hence, the standard
trivialization of $\mathcal{O}(1)$ over the affine piece $\C^{N}$
pulls back to give a trivialization of $L$ over $\C^{*n},$ where
the restricted max metric is represented by $\phi_{Y}(x),$ when expressed
in the logarithmic coordinates $x$ on $\R^{n}.$ Now, any other $L^{\infty}-$metric
$\phi$ on $L\rightarrow X_{\C}$ satisfies $\phi-\phi_{Y}\in L^{\infty}(X_{\C})$
and, as a consequence, restricting to $\C^{*n}$ and switching to
the real logarithmic coordinate $x$ shows that $\phi(x)$ is $\mathcal{C}_{Y}(\R^{n})_{+}.$
Conversely, given $\phi(x)$ in $\mathcal{C}_{Y}(\R^{n})_{+}$ we
have that $\phi(z)$ is psh on $\C^{*n}$ and $u:=\phi(z)-\phi_{Y}(z)$
is in $L^{\infty}(\C^{*n}).$ Since $\C^{*n}$ is dense in $X_{\C}$
and $u\leq C$ it admits a canonical upper semi-continuous (usc) extension
to all of $X_{\C}$ (namely, the smallest one). Since $\phi_{Y}(z)$
extends to define a continuous metric on $L\rightarrow X_{\C}$ this
means that $\phi$ extends from $\C^{*n}$ to a canonical usc metric
on $L\rightarrow X_{\C}$ (in additive notation) which has a positive
curvature current on $\C^{*n}\Subset X_{\C}.$ But the complement
$X_{\C}-\C^{*n}$ is an analytic subvariety of $X_{\C}$ and hence
it follows from basic local extension properties of psh functions
that the corresponding metric on $L\rightarrow X_{\C}$ has positive
curvature current.
\end{proof}
We note that for any $\phi$ as above
\begin{equation}
\frac{1}{n!}\int_{X_{\C}}(\omega^{\phi_{1}})^{n}==V(Y),\label{eq:snitt blir volym}
\end{equation}
 the volume of $Y.$ Indeed, since $MA_{\C}(\phi)$ does not charge
analytic subvarieties the integral can be restricted to $\C^{*n}$
and then formula \ref{eq:MA c and MA} below can be invoked. 

\subsection{\label{subsec:Complex-vs-real}Complex vs real notions}

If $\phi(z)=\phi(x),$ i.e. $\phi$ is independent of the $y-$variable,
then $\phi(z)$ is psh on $\C^{n}$ if and only if $\phi(x)$ is convex
on $\R^{n},$ as follows directly from the relation 
\begin{equation}
\frac{\partial^{2}\phi}{\partial z_{i}\partial\bar{z}_{j}}=\frac{1}{4}\sum_{i,j\leq n}\frac{\partial^{2}}{\partial x_{i}\partial x_{j}}\label{eq:cplex hessian as real}
\end{equation}
For example, if $\phi_{0}(x):=4\pi|x|^{2}/2,$ then $u(x)$ is quasi-convex
(i.e. $u(x):=\phi(x)-\phi_{0}(x)$ is convex) if and only if $u(z)$
is $\omega_{0}-$psh for 
\begin{equation}
\omega_{0}:=\frac{i}{2\pi}\partial\bar{\partial}\phi_{0}=\sum_{i}\frac{i}{2}dz_{i}\wedge d\bar{z}_{i}=\sum_{i}dx_{i}\wedge dy_{i},\label{eq:def of omega noll}
\end{equation}
the standard Kähler form on $\C^{n}.$ The form $\omega_{0}$ descends
to the Abelian variety $\C^{n}/(\Z+i\Z).$

\subsubsection{\label{subsec:The-complex-torus}The complex torus $\C^{*n}$}

Let $\mbox{Log }$be the map from $\C^{*n}$ to $\R^{n}$ defined
by 
\[
\mbox{Log}(z):=x:=(\log(|z_{1}|^{2}),...,\log(|z_{n}|^{2})).
\]
The real torus $T^{n}$ acts transitively on the fibers of the map
Log. Pulling back a convex function $\phi(x)$ on $\R^{n}$ by Log
yields a $T^{n}-$invariant function on $\C^{*n}$ that we will, abusing
notation slightly, denote by $\phi(z).$ The function $\phi(x)$ is
convex if and only if the corresponding function $\phi(z)$ on $\C^{*n}$
is plurisubharmonic. This can be seen by identifying $\C^{*n}$ with
$\C^{n}/i\pi\Z^{n}(=\R^{n}+i\pi T^{n})$ using the multivalued logarithmic
holomorphic coordinate $2\log z,$ and proceeding as in Section \ref{subsec:Complex-vs-real}. 

We will have great use for the following lemma: 
\begin{lem}
\label{lem:complex to real}Let $\phi(x)$ be a finite convex function
on $\R^{n}$ and denote by $\phi(z)$ the corresponding $T^{n}-$invariant
psh function on $\C^{*n}$ defined as the pull-back of $\phi(x)$
under the Log map. Similarly if $u$ is a difference of two finite
convex functions on $\R^{n}$ we denote by $u(z)$ the corresponding
function on $\C^{*n}.$ Then following two identities of measures
on $\R^{n}$ hold:
\begin{equation}
\mbox{(Log)}_{*}\left(\frac{1}{n!}(\frac{i}{2\pi}\partial\bar{\partial}\phi)^{n}\right)=MA(\phi).\label{eq:MA c and MA}
\end{equation}
and 
\[
\mbox{(Log)}_{*}\left(\frac{i}{2\pi}\partial u\wedge\bar{\partial}u\wedge\frac{1}{(n-1)!}(\frac{i}{2\pi}\partial\bar{\partial}\phi)^{n-1}\right)=\left|\nabla u\right|_{g^{\phi}}^{2}MA(\phi),
\]
if $\phi(x)$ is assumed to be smooth and strictly convex, where $g^{\phi}$
denotes the Riemannian metric on $\R^{n}$ defined by the symmetric
Hessian matrix $\nabla^{2}\phi$ and $\nabla u$ is defined almost
everywhere with respect to $dx.$ 
\end{lem}

\begin{proof}
These formulas are essentially well-known (in particular, the first
one), but for completeness a proof is provided. First consider two
smooth functions $\phi(z)$ and $u(z)$ defined on $\C^{n}$ with
holomorphic coordinate $z.$ Assume that $\phi(x+iy)$ and $u(x+iy)$
are independent of $y.$ Then 
\begin{equation}
\frac{1}{n!}(\frac{i}{2}\partial\bar{\partial}\phi)^{n}=\frac{1}{4^{n}}MA(\phi)\wedge dy\label{eq:pf of lemma complex real first formula}
\end{equation}
and

\begin{equation}
\frac{i}{2}\partial u\wedge\bar{\partial}u\wedge\frac{1}{(n-1)!}(\frac{i}{2}\partial\bar{\partial}\phi)^{n-1}=\frac{1}{4^{n}}\left|\nabla u(x)\right|_{g^{\phi}}^{2}MA(\phi)\wedge dy\label{eq:proof of lemma complex real second formula}
\end{equation}
Indeed, without loss of generality we may assume that $\phi(x)=\sum\lambda_{i}|x_{i}|^{2}$
(since it is enough to prove the formulas at a fixed point, which
may be taken as $x=0$ and since the formulas are invariant under
$z\mapsto Az$ for $A\in SO(n)).$ Then $\frac{i}{2}\partial\bar{\partial}\phi=\sum_{i}\frac{1}{4}\frac{\partial^{2}\phi}{\partial x_{i}\partial x_{i}}dx_{i}\wedge dy_{i}$
and hence formula \ref{eq:pf of lemma complex real first formula}
follows directly. As for formula \ref{eq:proof of lemma complex real second formula}
it follows from noting that the term involving $dz_{i}\wedge d\bar{z}_{i}$
in $\partial u\wedge\bar{\partial}u$ only gives a non-zero contribution
when it encounters the product of all terms in $\partial\bar{\partial}\phi$
not involving the index $i.$ Indeed, this gives
\[
\frac{i}{2}\partial u\wedge\bar{\partial}u\wedge\frac{1}{(n-1)!}(\frac{i}{2}\partial\bar{\partial}\phi)^{n-1}=\sum_{i}\frac{1}{4}\left|\frac{\partial u}{\partial x_{i}}\right|^{2}\frac{1}{\lambda_{i}}\lambda_{1}\lambda_{2}\cdots\lambda_{n}dx\wedge dy
\]
which proves formula \ref{eq:proof of lemma complex real second formula}.
Then, by standard local approximation arguments, formula \ref{eq:pf of lemma complex real first formula}
extends to the case when $\phi$ is non-smooth and formula \ref{eq:proof of lemma complex real second formula}
to the case when $u$ is non-smooth. Finally, consider $\C^{*n}$
and denote by $w_{i}$ its standard holomorphic coordinates. This
means that $z_{i}:=2\log w_{i}$ is multivalued on $\C^{*n}$ and
the real part of $z$ is equal to $\text{Log \ensuremath{(w).}}$
But locally $z$ defines holomorphic coordinates on $\C^{*n}$ to
which formula \ref{eq:pf of lemma complex real first formula} and
formula \ref{eq:proof of lemma complex real second formula} apply.
Moreover, we can identify $dy/(4\pi)^{n}$ with the standard invariant
probability measure on the $T^{n}-$fiber of the Log map from $\C^{*n}$
to $\R^{n}.$ Since the push-forward operation amounts to integration
along the fibers, the formulas in the lemma thus follow from formula
\ref{eq:pf of lemma complex real first formula} and formula \ref{eq:proof of lemma complex real second formula}. 
\end{proof}

\section{\label{sec:Proof-of-Theorems}Proof of Theorems \ref{thm:thm H1 intro},
\ref{thm:(General-case) intro}}

Let $X$ and $Y$ be open bounded domains in $\R^{n},$ with $Y$
convex. Assume given a positive function $g\in L^{1}(Y,dy)$ such
\begin{equation}
\delta:=\inf_{Y}g>0\label{eq:def of delta in text}
\end{equation}
Recall that $MA_{g}(\phi)$ denotes the ``$g-$Monge-Ampère measure''
of a given convex function $\phi,$ defined in Section \ref{subsec:The-2nd-boundary}.
When $g=1$ we simply write $MA_{g}=MA.$ 

\subsection{The key analytic inequalities when $g=1$ }

We first consider when $g=1,$ starting with the case of a uniformly
convex $\phi_{0}.$ 
\begin{prop}
\label{prop:key ineq}Given bounded open domains $X$ and $Y$ with
$Y$ assumed convex, assume that $\phi_{0}$ and $\phi_{1}$ are convex
functions on $X,$ such that the the closures of the sub-gradient
images $(\partial\phi_{i})(X)$ are equal to $\overline{Y}.$ If there
exists a positive constant $C_{0}$ such that $\nabla^{2}\phi_{0}\geq C_{0}^{-1}I$
(in the sense of distributions), then 
\[
\int_{X}|\nabla\phi_{0}-\nabla\phi_{1}|^{2}dx\leq nC_{0}^{n-1}\int_{X}(\phi_{1}-\phi_{0})(MA(\phi_{0})-MA(\phi_{1})).
\]
 
\end{prop}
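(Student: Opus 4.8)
The plan is to pass to the complexification, where the left-hand side becomes (up to a constant) the mass of the positive current $\partial(\phi_0-\phi_1)\wedge\bar\partial(\phi_0-\phi_1)$ integrated against $n-1$ copies of the positive forms $\omega^{\phi_0}$, and then use a polarized/telescoping version of the Monge–Amp\`ere comparison, exactly in the spirit of the Aubin–Blocki inequalities cited in the introduction. Concretely, set $u:=\phi_1-\phi_0$ and work with the $T^n$-invariant psh functions $\phi_i(z)=\phi_i(\mathrm{Log}\,z)$ on the complex torus $\C^{*n}$; by formula \eqref{eq:cplex hessian as real} the real gradient difference and the complex first-order quantities are related, and $\mathrm{Log}_*$ pushes the relevant complex measures down to the real ones via \eqref{eq:MA c and MA}. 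The hypothesis $C^{-1}I\le\nabla^2\phi_0\le CI$ translates to $C^{-1}\omega_{\mathrm{euc}}\le\omega^{\phi_0}\le C\,\omega_{\mathrm{euc}}$ (again by \eqref{eq:cplex hessian as real}), which is precisely what is needed to compare $\int |\nabla u|^2\,dx$ with $\int \partial u\wedge\bar\partial u\wedge(\omega^{\phi_0})^{n-1}$ up to the factor $C^{n-1}$: each time one replaces a factor $\omega^{\phi_0}$ by the flat form one loses a factor $C$, and there are $n-1$ such factors.

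The algebraic heart is the identity (integration by parts, no boundary terms if we first argue on $\C^{*n}$ or, better, appeal to Lemma \ref{lem:conv function as metric} and work on the compact toric variety $X_\C$ so there is genuinely no boundary):
\[
\int \partial u\wedge\bar\partial u\wedge T \;=\; -\int u\,\partial\bar\partial u\wedge T,
\]
valid for any positive closed $(n-1,n-1)$-current $T$ that is $\partial$- and $\bar\partial$-closed, with $\partial\bar\partial u=\partial\bar\partial\phi_1-\partial\bar\partial\phi_0$. Then one telescopes
\[
(\omega^{\phi_1})^n-(\omega^{\phi_0})^n \;=\; \partial\bar\partial u\wedge\sum_{k=0}^{n-1}(\omega^{\phi_1})^k\wedge(\omega^{\phi_0})^{n-1-k}\,\cdot(\text{const}),
\]
so that
\[
\int u\,\big((\omega^{\phi_0})^n-(\omega^{\phi_1})^n\big) \;=\; \int \partial u\wedge\bar\partial u\wedge\sum_{k=0}^{n-1}(\omega^{\phi_1})^k\wedge(\omega^{\phi_0})^{n-1-k}\,\cdot(\text{const}).
\]
Every term on the right is a mass of a positive current, hence $\ge$ the single term $k=0$, namely $\int\partial u\wedge\bar\partial u\wedge(\omega^{\phi_0})^{n-1}$ (times the constant). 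Translating back via \eqref{eq:cplex hessian as real}, \eqref{eq:MA c and MA} and the bound $\omega^{\phi_0}\ge C^{-1}\omega_{\mathrm{euc}}$, the left-hand side becomes $\int_X(\phi_1-\phi_0)(MA(\phi_0)-MA(\phi_1))$ and the surviving term dominates $C^{-(n-1)}\int_X|\nabla\phi_0-\nabla\phi_1|^2\,dx$, which is the claim after multiplying through by $C^{n-1}$.

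The main obstacle I anticipate is \emph{justifying the integration by parts and the positivity/telescoping manipulations in the limited-regularity setting}: $\phi_1$ is only convex (its complexification only bounded psh), while $\phi_0\in C^2$ with two-sided bounds, and the product currents $(\omega^{\phi_1})^k\wedge(\omega^{\phi_0})^{n-1-k}$ and $\partial u\wedge\bar\partial u\wedge(\cdots)$ must be given meaning. This is exactly where Bedford–Taylor theory for locally bounded psh functions (recalled in Section \ref{subsec:Recap-of-complex}) is invoked: the hypotheses of Prop.\ \ref{prop:key ineq} guarantee $\phi_0,\phi_1$ have sub-gradient images in $\overline Y$, so after normalization they lie in $\mathcal C_Y(\R^n)_+$ and, by Lemma \ref{lem:conv function as metric}, extend to bounded positively-curved metrics on $L\to X_\C$; the wedge products and the integration-by-parts formula $\partial\varphi\wedge\bar\partial\varphi=-\varphi\,\partial\bar\partial\varphi+\partial\bar\partial\varphi^2$ from Section \ref{subsec:Recap-of-complex} then hold on the compact $X_\C$ with no boundary terms, and the mass of $MA_\C$ is $\mathrm{Vol}(Y)$ by \eqref{eq:snitt blir volym}. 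One subtlety to handle carefully is that $X$ need not equal a fundamental domain / all of $\R^n$, but by Lemma \ref{lem:uniqueness and extension} the solutions on $X$ are restrictions of the ones on $\R^n$, and since the gradient images already exhaust $\overline Y$ the integrals over $X$ and over $\R^n$ agree (the complement contributes zero Monge–Amp\`ere mass and zero gradient-difference where $\phi_0,\phi_1$ share the same affine behaviour), so we may freely work on the compact model. Once these regularity points are in place the inequality is purely the positivity of the omitted telescoping terms.
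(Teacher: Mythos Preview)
Your proposal is correct and follows essentially the same route as the paper's proof: extend to $\R^n$ via Lemma \ref{lem:uniqueness and extension}, complexify to $\C^{*n}$, compactify via Lemma \ref{lem:conv function as metric} to the toric variety, use the telescoping identity \eqref{eq:exp of MA minus MA} together with integration by parts on the compact model (formula \eqref{eq:I on toric var}), drop the positive terms with $k\geq 1$, and translate back using the two-sided Hessian bounds on $\phi_0$ (which is exactly how the factor $C^{n-1}$ arises). Two small points to tighten: Lemma \ref{lem:conv function as metric} requires $Y$ to be a \emph{rational} polytope, so the paper first proves the identity for such $Y$ and then approximates a general convex $Y$ by an increasing sequence of rational polytopes; and your parenthetical that the complement of $X$ contributes ``zero gradient-difference'' is not correct in general, but is harmless since only the inequality $\int_X(\cdots)\le\int_{\R^n}(\cdots)$ is needed on the left-hand side (the right-hand side is unchanged because the Monge--Amp\`ere measures of the extensions are supported in $X$).
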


In order to prove this we start with some preparations. First, by
Lemma \ref{lem:complex to real}
\begin{equation}
\int_{X}|\nabla(\phi_{1}-\phi_{0})|_{g^{\phi}}^{2}MA(\phi)=\int_{(\text{Log})^{-1}(X)}\frac{i}{2\pi}\partial u\wedge\bar{\partial}u\wedge\frac{1}{(n-1)!}(\omega^{\phi})^{n-1}\label{eq:Hessian norm as complex}
\end{equation}
holds for for any smooth convex function $\phi$ on $X.$ Next, we
will use the same notation $\phi_{0}$ and $\phi_{1}$ for the canonical
extensions to $\R^{n}$ solving the corresponding second boundary
problem on all of $\R^{n}$ (as in Lemma \ref{lem:uniqueness and extension}).
In general, if $\phi(x)$ is a convex function on $\R^{n}$ we will
denote by $\phi(z)$ the corresponding $T^{n}-$invariant plurisubharmonic
function on $\C^{*n},$ obtained by pulling back $\phi$ to $\C^{*n}$
using the Log map (abusing notation slightly, as in Section \ref{subsec:The-complex-torus}).
It will be enough to prove the following identity:
\begin{equation}
-\int_{\C^{*n}}(\phi_{0}-\phi_{1})\left((\omega^{\phi_{0}})^{n}-(\omega^{\phi_{1}})^{n}\right)=\frac{i}{2\pi}\sum_{j=0}^{n}\int_{\C^{*n}}\partial(\phi_{0}-\phi_{1})\wedge\bar{\partial}(\phi_{0}-\phi_{1})\wedge(\omega^{\phi_{0}})^{n-j}\wedge(\omega^{\phi_{1}})^{j},\label{eq:exp of I on cplx torus}
\end{equation}
Indeed, first applying formula \ref{eq:Hessian norm as complex} to
$\phi=|x|^{2}/2$ a gives 
\[
\int_{X}|\nabla\phi_{0}-\nabla\phi_{1}|^{2}dx=\frac{i}{2\pi}\int_{(\text{Log})^{-1}(X)}\partial(\phi_{0}-\phi_{1})\wedge\bar{\partial}(\phi_{0}-\phi_{1})\wedge(\omega^{\phi})^{n-1}\leq
\]
\[
\leq C_{0}^{n-1}\frac{i}{2\pi}\int_{(\text{Log})^{-1}(X)}\partial(\phi_{0}-\phi_{1})\wedge\bar{\partial}(\phi_{0}-\phi_{1})\wedge(\omega^{\phi_{0}})^{n-1},
\]
 using that, by assumption, $\omega^{\phi_{0}}\geq C_{0}^{-1}\omega^{\phi}.$
Finally, using that all the integrands in the rhs of formula \ref{eq:exp of I on cplx torus}
are non-negative will then conclude the proof. In order to prove formula
\ref{eq:exp of I on cplx torus} first note that
\[
-\int_{\C^{*n}}(\phi_{0}-\phi_{1})\left((\omega^{\phi_{0}})^{n}-(\omega^{\phi_{1}})^{n}\right)=-\sum_{j=0}^{n}\int_{\C^{*n}}(\phi_{0}-\phi_{1})\wedge\frac{i}{2\pi}\partial\bar{\partial}(\phi_{0}-\phi_{1})\wedge(\omega^{\phi_{0}})^{n-j}\wedge(\omega^{\phi_{1}})^{j},
\]
 as follows directly from the algebraic identity

\begin{equation}
(\omega^{\phi_{1}})^{n}-(\omega^{\phi_{0}})^{n}=\sum_{j=1}^{n-1}(\omega^{\phi_{1}}-\omega^{\phi_{0}})\wedge(\omega^{\phi_{0}})^{n-j}\wedge(\omega^{\phi_{1}})^{j}\label{eq:exp of MA minus MA}
\end{equation}
This means that if integration by parts can be justified, then the
desired identity \ref{eq:exp of I on cplx torus} follows. However,
the non-compactness of $\C^{*n}$ poses non-trivial difficulties,
so instead of working directly on $\C^{*n}$ we will use a compactification
argument, which applies when $Y$ is a rational polytope (the general
case will then follow by approximation).

\subsubsection*{Compactification when $Y$ is a convex polytope with rational vertices}

Let $X_{\C}$ be the compact toric complex analytic variety determined
by the moment polytope $\bar{Y}$ and denote by $L$ the corresponding
ample line bundle over $X_{\C}$ (Section \ref{subsec:The-toric-variety}).
By Lemma \ref{lem:l infty bound in terms of Sob}, combined with Lemma
\ref{lem:conv function as metric}, the $T^{n}-$invariant psh functions
$\phi_{0}(z)$ and $\phi_{1}(z)$ on $\C^{*n}$ extend to define $L^{\infty}-$metrics
on the line bundle $L\rightarrow X_{\C}$ with positive curvature
currents $\omega^{\phi_{0}}$ and $\omega^{\phi_{1}}.$ In particular,
$\phi_{0}-\phi_{1}\in L^{\infty}(X_{\C}).$ We claim that
\begin{equation}
\int_{X_{\C}}(\phi_{1}-\phi_{0})\left((\omega^{\phi_{0}})^{n}-(\omega^{\phi_{1}})^{n}\right)=\sum_{j=0}^{n}\int_{X_{\C}}\frac{i}{2\pi}\partial(\phi_{0}-\phi_{1})\wedge\bar{\partial}(\phi_{0}-\phi_{1})\wedge(\omega^{\phi_{0}})^{n-j}\wedge(\omega^{\phi_{1}})^{j}\label{eq:I on toric var}
\end{equation}
This is a well-known identity in Kähler geometry (in the smooth case)
and global pluripotential theory (in the general singular $L^{\infty}-$case)
and follows from the general integration by parts formula for psh
functions in $L_{loc}^{\infty}$ in \cite[Thm 1.14]{begz} (see \cite[Formula 2.9]{bbgz}).
But for the benefit of the reader we provide the following alternative
proof. First, assume that the metrics $\phi_{i}$ on $L$ are smooth
(i.e. the restrictions to $L\rightarrow X$ of smooth metrics on $\mathcal{O}(1)\rightarrow\P^{N}).$
Expanding point-wise, as in formula \ref{eq:exp of MA minus MA} and
using that the form $\Theta:=(\omega^{\phi_{0}})^{n-j}\wedge(\omega^{\phi_{1}})^{j}$
is closed formula \ref{eq:I on toric var} then follows from Stokes
formula if $X_{\C}$ is non-singular and from Stokes formula on a
non-singular resolution of $X_{\C}$ if $X_{\C}$ is singular. To
handle the general case we invoke the general fact that any (possibly
singular) metric on an ample line bundle $L$ over a projective complex
variety can be written as a decreasing limit of smooth metrics \cite{c-g-z}
(in fact, this can be shown by a simple direct argument in the present
toric setting). Formula \ref{eq:I on toric var} then follows from
the previous smooth case, combined with the continuity of expressions
of the form appearing in formula \ref{eq:I on toric var} under decreasing
limits (see \cite[Prop 2.8]{begz} and its proof for much more general
convergence results). 

\subsubsection*{Conclusion of proof of Proposition \ref{prop:key ineq}}

Let $Y$ be as in the previous step. Since the complex Monge-Ampère
measures of locally bounded psh functions do not charge complex analytic
subvarieties formula \ref{eq:I on toric var} on $\C^{*n}$ follows
from formula \ref{eq:I on toric var} on $X_{\C}.$ Since each term
in the right hand side above is non-negative we deduce that 
\[
-\int_{\C^{*n}}(\phi_{0}-\phi_{1})\left((\omega^{\phi_{0}})^{n}-(\omega^{\phi_{1}})^{n}\right)\geq\frac{1}{n!}\frac{i}{2\pi}\int_{(\text{Log})^{-1}(X)}\partial(\phi_{0}-\phi_{1})\wedge\bar{\partial}(\phi_{0}-\phi_{1})\wedge(\omega^{\phi_{0}})^{n}
\]
By \ref{eq:MA c and MA} and \ref{eq:Hessian norm as complex} this
proves formula \ref{eq:I on toric var} when $Y$ is a convex polytope
with rational vertices. In the general case, we can write $Y$ is
an increasing limit of rational convex polytopes $Y_{j}.$ Then the
general case follows from the previous case and basic qualitative
stability properties for the solution of the second boundary value
problem for $MA$ on $X$ with respect to variations of the target
domain $Y.$ 

\subsubsection{The key inequality for general $\phi_{0}$ and $\phi_{1}$}

We next turn to the key analytic inequality in the case of general
$\phi_{0},$ but still with $g=1.$ 
\begin{prop}
\label{prop:key non reg}Given bounded open domains $X$ and $Y$
with $Y$ assumed convex, assume that $\phi_{0}$ and $\phi_{1}$
are convex functions on $X,$ such that the the closures of the sub-gradient
images $(\partial\phi_{i})(X)$ are equal to $\overline{Y}.$ Then
there exists a constant $C$ only depending on $X$ and $Y$ such
that
\[
\int_{X}|\nabla\phi_{0}-\nabla\phi_{1}|^{2}dx\leq C\left(\int_{X}(\phi_{\text{1}}-\phi_{0})(MA(\phi_{1})-MA(\phi_{0}))\right)^{1/2^{n-1}}
\]
More precisely, the constant $C$ only depends on upper bounds on
the the diameters of $X$ and $Y,$ $d(X)$ and $d(Y)$ and a positive
lower bound on the volume $V(Y)$ of $Y$ 
\end{prop}

\begin{proof}
It will be enough to show that
\begin{equation}
\int_{X}|\nabla\phi_{0}-\nabla\phi_{1}|_{g^{\phi^{Y}}}^{2}MA(\phi^{Y})\leq C_{Y}\left(\int_{X}(\phi_{1}-\phi_{0})(MA(\phi_{0})-MA(\phi_{1}))\right)^{1/2^{n-1}},\label{eq:ineq in pf prop key reg}
\end{equation}
for a fixed $\phi^{Y}\in\mathcal{C}_{Y}(\R^{n})_{+}$ such that 
\[
\nabla^{2}\phi^{Y}\leq A_{1}I,\,\,\,MA(\phi^{Y})\geq A_{2}dx
\]
 for some positive constants $A_{1}$ and $A_{2}.$ In fact, setting
\begin{equation}
R(X):=(\sup_{x\in X}|x|)\label{eq:outer radius}
\end{equation}
we will show that $C_{Y}$ only depends on upper bounds on $R(X),R(Y)$
and 
\begin{equation}
A_{3}:=\left\Vert \phi^{Y}-\phi_{Y}\right\Vert _{L^{\infty}(\R^{n})/\R}:=\inf_{c\in\R}\left\Vert (\phi^{Y}+c)-\phi_{Y}\right\Vert \label{eq:def of A three}
\end{equation}
To this end we may, without loss of generality, assume that $\phi_{0}$
and $\phi_{1}$ are sup-normalized as in Lemma \ref{lem:l infty bound in terms of Sob}.
Fix a sequence of rational convex polytopes $Y_{j}$ increasing to
$Y$ and some (say smooth) functions $\phi^{Y_{j}}\in\mathcal{C}_{Y}(\R^{n})_{+}$
such that 
\[
\phi^{Y_{j}}-\phi_{Y_{j}}\rightarrow\phi^{Y}-\phi_{Y}
\]
 uniformly on $\R^{n}.$ Just as in the proof of Theorem \ref{thm:thm H1 intro}
it will be enough to prove that the inequality in the proposition
holds when $Y$ is a rational convex polytope and $\phi^{Y}$ is replaced
by $\phi^{Y_{j}}$ (by letting $j\rightarrow\infty$ in the end).
We will identify, just as in the proof of Theorem \ref{thm:thm H1 intro}
a convex function $\phi(x)$ on $\R^{n}$ with a psh function $\phi(z)$
on $\C^{*n}.$ Setting $\omega_{Y_{j}}:=\omega^{\phi^{Y_{j}}}$ it
will thus be enough to show that
\[
i\int_{\C^{*n}}\partial(\phi_{0}-\phi_{1})\wedge\bar{\partial}(\phi_{0}-\phi_{1})\wedge\omega_{Y_{j}}^{n-1}\leq C_{Y_{j}}\left(\int_{\C^{*n}}(\phi_{1}-\phi_{0})\left((\omega^{\phi_{0}})^{n}-(\omega^{\phi_{1}})^{n}\right)\right)^{1/2^{(n-1)}}
\]
We will deduce this inequality from the following inequality of Blocki
\cite{bl} (see also\cite[Lemma A.1]{BBJ} for a more general inequality).
Let $(M,\omega)$ be a compact Kähler manifold and $u_{0}$ and $u_{1}$
$\omega-$psh functions on $M$ in $L^{\infty}(X).$ Then there exists
a constant $C_{M}$ such that 
\begin{equation}
i\int_{M}\partial(u_{0}-u_{1})\wedge\bar{\partial}(u_{0}-u_{1})\wedge\omega^{n-1}\leq C_{M}\left(\int_{M}(u_{0}-u_{1})(\omega_{u_{1}}^{n}-\omega_{u_{0}}^{n})\right)^{1/2^{(n-1)}}\label{eq:blocki}
\end{equation}
 The constant $C_{M}$ only depends (in a continuous fashion) on upper
bounds on the $L^{\infty}-$norms of $u_{0}$ and $u_{1}$ and the
volume of $\omega.$ More generally, exactly the same proof as in
\cite{bl} shows that the inequality holds more generally when $\omega$
is a semi-positive form with an $L^{\infty}-$potential and positive
volume. In the present setting we set 
\[
u_{0}:=\phi_{0}(z)-\phi^{Y^{j}}(z),\,\,\,u_{1}=\phi_{1}(z)-\phi^{Y^{j}}(z),
\]
 originally defined on $\C^{*n}.$ Just as in the proof of Theorem
\ref{thm:thm H1 intro} the functions $\phi_{0},\phi_{1}$ and $\phi^{Y^{j}}$
may be identified with $L^{\infty}-$metrics on the ample line bundle
$L\rightarrow M_{j}$ over the toric variety $M_{j}$ determined by
$Y_{j}.$ Moreover, the corresponding currents extend to positive
currents on $M_{j}.$ Passing to a smooth resolution we may as well
assume that $M_{j}$ is smooth (and $L$ semi-ample). Applying the
inequality \ref{eq:blocki} on $M_{j}$ all that remains is to verify
that the corresponding constants $C_{M_{j}}$ are under control. Applying
Lemma \ref{lem:l infty bound in terms of Sob} and using that, by
assumption, $Y_{j}$ is contained in $Y,$ gives, for a fixed $q>n,$
\[
\left\Vert \phi_{0}-\phi^{Y^{j}}\right\Vert _{L^{\infty}}\leq\frac{d(Y)}{V(Y_{j})}\int_{\R^{n}}|x|MA(\phi_{0})+C_{n,q}\frac{d(Y)^{\left(1+n(1-1/q)\right)}}{V(Y_{j})}\int_{\R^{n}}|x|^{q}MA(\phi_{0})+\left\Vert \phi^{Y_{j}}-\phi_{Y_{j}}\right\Vert _{L^{\infty}}
\]
 in terms of the $L^{\infty}-$norms on $\R^{n}.$ Hence, 
\[
\limsup_{j\rightarrow\infty}\left\Vert u_{0}\right\Vert _{L^{\infty}(M_{j})}\leq d(Y)\int_{\R^{n}}|x|\frac{MA(\phi_{0})}{V(Y)}+C_{n,q}d(Y)^{\left(1+n(1-1/q)\right)}\int|x|^{q}\frac{MA(\phi_{0})}{V(Y)}+\left\Vert \phi^{Y}-\phi_{Y}\right\Vert _{L^{\infty}},
\]
Since the probability measure $\mu_{0}:=MA(\phi_{0})/V(Y)$ is supported
in $X$ it follows that
\[
\limsup_{j\rightarrow\infty}\left\Vert u_{0}\right\Vert _{L^{\infty}(M_{j})}\leq d(Y)R(X)+C_{n,q}d(Y)^{\left(1+n(1-1/q)\right)}R(X)^{q}+\left\Vert \phi^{Y}-\phi_{Y}\right\Vert _{L^{\infty}}.
\]
 The same estimate also holds (for the same reason) for $u_{1}.$
Finally, since the left hand side in the inequality \ref{eq:ineq in pf prop key reg}
is invariant under $\phi^{Y}\rightarrow\phi^{Y}+c$ for $c\in\R$
it follows that the constant $C^{Y}$ appearing in \ref{eq:ineq in pf prop key reg}
only depends on upper bounds on $\phi_{Y}$ through the quantity $A_{3}$
defined by formula \ref{eq:def of A three}. Moreover, as explained
above $C^{Y}$ also depends on an upper bound on the volume of $\omega_{Y_{j}}$
on $M_{j}.$ But (by formula \ref{eq:snitt blir volym}) this equals
$n!$ times the volume of $Y_{j},$ which is bounded from above by
a constant times $d(Y)^{n}.$ All in all this proves the inequality
\ref{eq:ineq in pf prop key reg} with the desired control on the
constant $C_{Y}.$ 
\end{proof}

\subsubsection{The dependence on $X$ and $Y$}

Finally, we show that the constant $C$ appearing in Proposition \ref{prop:key non reg}
can be made to only depend on upper bounds on $d(X),$ $d(Y)$ and
$1/V(Y).$ To see this first note that both sides of the inequality
in Proposition \ref{prop:key non reg} are invariant under $\phi_{i}(x)\rightarrow\phi_{i}(x)-x\cdot b$
for any $b\in\R^{n},$ if $Y$ is replaced by $Y-\{b\}.$ In particular,
taking $b$ to be the barycenter of $Y$ we may as well assume that
$0$ is the barycenter of $Y.$ Similarly, since both sides of inequality
in Proposition \ref{prop:key non reg} are invariant under $\phi_{i}(x)\rightarrow\phi_{i}(x+a)$
for any $a\in\R^{n}$ we may as well assume that $0\in X.$ It will
then be enough to show that the auxiliary convex function $\phi^{Y}$
used in the proof above can be chosen so that the corresponding constants
$A_{1},A_{2}$ and $A_{3}$ only depend on upper bounds the outer
radius $R(X)$ and $R(Y)$ of $X$ and $Y$ (defined as in formula
\ref{eq:outer radius}) and $1/V(Y).$ Indeed, since $0\in Y$ and
$0\in X$ we have that $R(Y)\leq\ensuremath{d(Y)}$ and $R(X)\leq d(X).$

We will take $\phi^{Y}$ to be the unique smooth and strictly convex
function $\phi$ in $\mathcal{C}_{Y}(\R^{n})_{+}$ solving 
\begin{equation}
MA(\phi)=e^{-\phi}dx,\,\,\,(\nabla\phi)(0)=0\label{eq:keq equation}
\end{equation}
(the first equation geometrically means that $\phi(z)$ is a Kähler
potential for a Kähler-Einstein metric on $\C^{*n}$ and the second
one ensures that $\phi$ is uniquely determined). Since we have assumed
that $0$ is the barycenter of $Y$ it follows from \cite{be-be}
that such a solution indeed exists (and is uniquely determined). Moreover,
by \cite{kl} the solution satisfies 
\[
\nabla^{2}\phi\leq2R(Y)^{2}I.
\]
 Thus all that remains is to verify the bounds on $A_{2}$ and $A_{3}.$
To this end first note that it follows directly from the defining
equation for $\phi$ that it is enough to establish an upper bound
on $A_{3}.$ This will be accomplished by the following
\begin{prop}
The unique solution $\phi\in\mathcal{C}_{Y}(\R^{n})_{+}$ of \ref{eq:keq equation}
satisfies
\[
\left\Vert \phi-\phi_{Y}\right\Vert _{L^{\infty}(\R^{n})}\leq A,
\]
 where the constant $A$ only depends on upper bounds on $R(Y),$
$1/V(Y)$ and $1/R_{0}(Y),$ where $R_{0}(Y):=d(0,\partial Y)$ is
the distance between $0$ and $\partial Y.$ 
\end{prop}

\begin{proof}
This proposition is implicitly contained in \cite{do}, but since
it is not stated explicitly there we briefly explain how to extract
the bound in the proposition from the estimates in \cite[Section 3.3]{do}.
To this end denote by $v(y)$ the convex function on $Y$ defined
by the Legendre transform of $\phi$ (which is denoted by $u$ in
\cite{do} and $Y$ is denoted by $P$ there). The equation for $\phi$
translates into 
\[
\log\det(\nabla v)=x\cdot\nabla v-v
\]
(the lhs is denoted by $L$ in \cite{do} and the rhs by $h,$ but
the constant $C$ appearing in \cite[formula 22]{do} vanishes in
the present setting). The assumption that $\nabla\phi(0)=0$ equivalently
means that $\nabla v(0)=0$ which, in turn, means that the infimum
of $v$ on $Y$ is attained at $y=0.$ Set 
\[
m:=-v(0)=-\inf_{Y}v
\]
Step $2$ in \cite[Section 3.3]{do} yields a lower bound on $m$
in terms of an upper bound on $R(Y)$ and a lower bound on the volume
of $Y.$ Step 4 gives an upper bound on $m$ in terms of upper bounds
on $1/R_{0}(Y)$ and $1/V(Y).$ This implies that 
\[
\phi(x)\geq\kappa^{-1}|x|+m-\kappa^{-1},
\]
 where $\kappa$ is bounded from above by a constant only depending
on upper bounds on $1/R_{0}(Y)$ and $1/V(Y)$ (see \cite[formula 24]{do}).
But then it follows that, for any $q>0,$ the moment $\int_{\R^{n}}|x|^{q}MA(\phi)/V$
is bounded from above by a constant only depending on upper bounds
on $R(Y),$ $1/R_{0}(Y)$ and $1/V(Y).$ Hence, by Lemma \ref{lem:l infty bound in terms of Sob}
the inequality in the proposition holds if $\sup_{\R^{n}}(\phi-\phi_{Y})$
is added to the right hand side of the inequality. But, since $\sup_{\R^{n}}(\phi-\phi_{Y})=m,$
which as explained above is bounded from above by upper bounds on
$1/R_{0}(Y)$ and $1/V(Y),$ this proves the proposition.
\end{proof}
Finally, the desired bound on $A_{3}$ follows from the following
elementary lemma showing that $1/R_{0}(Y)\apprle R(Y)^{n-1}/V(Y):$
\begin{lem}
Let $Y$ be a convex body and denote by $b_{Y}$ the barycenter of
$Y.$ Assume that $Y$ is contained in a ball $B_{R}$ of radius $R.$
Then
\[
d(b_{Y},\partial Y)\geq\frac{1}{2c_{n}}\frac{V(Y)}{R^{n-1}},
\]
 where $c_{n}$ denotes the volume of the unit-sphere $\partial B_{1}$
in $\R^{n}.$ 
\end{lem}

\begin{proof}
Denote by $d(y)$ the function on $Y$ defined as the distance of
$y\in Y$ to $\partial Y$ and set $Y_{\epsilon}:=\{d\geq\epsilon\}$
for a given $\epsilon>0.$ Decomposing $b_{Y}$ as a convex combination
of $b_{Y_{\epsilon}}$ and $b_{(Y-Y_{\epsilon})},$ 
\[
b_{Y}=\frac{V(Y_{\epsilon})}{V(Y)}b_{Y_{\epsilon}}+\frac{V(Y-Y_{\epsilon})}{V(Y)}b_{(Y-Y_{\epsilon})}
\]
and using that the function $d(y)$ is concave (since it can be expressed
as the infimum over the affine functions defined by the distances
to the supporting hyperplanes of $Y$) and non-negative gives
\[
d(b_{Y})\geq\frac{V(Y_{\epsilon})}{V(Y)}\epsilon.
\]
By the monotonicity of the surface area of convex bodies (i.e the
classical fact that $V(\partial P)\leq V(\partial Q)$ if the convex
body $P$ is included in the convex body $Q$ \cite{st}) we have,
assuming that $Y-Y_{\epsilon}$ is non-empty, 
\[
V(Y_{\epsilon})=V(Y)-V(Y-Y_{\epsilon})\geq V(Y)-\epsilon V(\partial Y)\geq V(Y)-\epsilon V(\partial B_{R})
\]
Hence, $d(b_{Y})\geq\epsilon-\epsilon^{2}c_{n}R^{n-1}/V(Y)$ for any
such $\epsilon>0.$ Optimizing over $\epsilon,$ i.e. taking $\epsilon=\epsilon_{0}:=\frac{1}{2c_{n}}\frac{V(Y)}{R(Y)^{n-1}}$
thus concludes the proof when $Y-Y_{\epsilon}$ is non-empty. Finally,
if $Y-Y_{\epsilon_{0}}$ is empty then, since $b_{Y}\in Y,$ it must
be that $d(b_{Y})\geq\epsilon_{0},$which concludes the proof. 
\end{proof}

\subsection{The key analytic inequalities for general $g$}

We next turn to the case of a general density $g$ with a strict positive
lower bound $\delta$ on $Y$ (formula \ref{eq:def of delta in text}).
\begin{thm}
\label{thm:analytic ineqs text}Let $X$ and $Y$ be bounded open
domains $X$ in $\R^{n}$ and assume that $Y$ is convex and endowed
with a probability measure $\nu$ satisfying \ref{eq:def of nu}.
Let $\phi_{0}$ and $\phi_{1}$ be convex functions on $X,$ such
that the the closures of the sub-gradient images $(\partial\phi_{i})(X)$
are equal to $\overline{Y.}$ If there exists a positive constant
$C_{0}$ such that $\nabla^{2}\phi_{0}\geq C_{0}^{-1}I$ (in the sense
of distributions), then 
\[
\int_{X}|\nabla\phi_{0}-\nabla\phi_{1}|^{2}dx\leq n(n+1)C_{0}^{n-1}\delta^{-1}\int_{X}(\phi_{1}-\phi_{0})(MA_{g}(\phi_{0})-MA_{g}(\phi_{1})).
\]
 In general, there exists a constant $C$ independent of $\phi_{0}$
and $\phi_{1}$ such that 
\[
\int_{X}|\nabla\phi_{0}-\nabla\phi_{1}|^{2}dx\leq C\left(\delta^{-1}\int_{X}(\phi_{1}-\phi_{0})\left(MA_{g}(\phi_{0})-MA_{g}(\phi_{1})\right)\right)^{1/2^{n-1}}.
\]
 More precisely, the constant $C$ only depends on upper bounds on
the the diameters of $X$ and $Y,$ $d(X)$ and $d(Y)$ and a positive
lower bound on the volume $V(Y)$ of $Y$ 
\end{thm}

Fix $\phi_{0}\in\mathcal{C}_{Y}(\R^{n})_{+}$ (the class defined in
Section \ref{subsec:The-class-}) and set $\mu_{0}:=MA_{g}(\phi_{0}).$
Consider the following functional on $\mathcal{C}_{Y}(\R^{n})_{+}:$
\[
I_{g}(\phi):=\int_{\R^{n}}(\phi-\phi_{0})\left(-MA_{g}(\phi)+\mu_{0}\right)
\]
In order to prove Theorem \ref{thm:analytic ineqs text} it will,
by Propositions \ref{prop:key ineq}, \ref{prop:key non reg} be enough
to prove the following
\begin{prop}
\label{prop:red to}Setting $\delta:=\inf_{Y}g$ we have 
\[
I_{g}\geq\frac{\delta}{(n+1)}I
\]
\end{prop}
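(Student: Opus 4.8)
The plan is to push both functionals forward to the target domain $Y$ through the Legendre transform: there the hypothesis $g\geq\delta$ becomes a pointwise statement, and the whole inequality collapses to one elementary estimate between convex functions. Throughout, $\phi$ is a fixed but arbitrary element of $\mathcal{C}_{Y}(\R^{n})_{+}$, and (setting $g=1$ in the definition, so that $\mu_{0}$ becomes $MA(\phi_{0})$) the functional $I=I_{1}$ is $\int(\phi-\phi_{0})\bigl(MA(\phi_{0})-MA(\phi)\bigr)$. Since $\phi_{0},\phi\in\mathcal{C}_{Y}(\R^{n})_{+}$, the Legendre transforms $\phi_{0}^{*}$ and $\phi^{*}$ are finite (indeed bounded) on $Y$ and equal to $+\infty$ outside $\overline{Y}$; hence, by \ref{eq:MA phi as push} together with the definition of $MA_{g}$, one has $MA_{g}(\psi)=(\nabla\psi^{*})_{*}(1_{Y}g\,dy)$ both for $\psi=\phi_{0}$ and for $\psi=\phi$. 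As $\phi-\phi_{0}\in L^{\infty}(\R^{n})$ is continuous, pushing the two Monge--Amp\`ere measures forward gives
\[
I_{g}(\phi)=\int_{Y}F(y)\,g(y)\,dy ,\qquad I(\phi)=\int_{Y}F(y)\,dy ,\qquad F(y):=(\phi-\phi_{0})\bigl(\nabla\phi_{0}^{*}(y)\bigr)-(\phi-\phi_{0})\bigl(\nabla\phi^{*}(y)\bigr) ,
\]
where $F$ is defined for a.e.\ $y\in Y$, both gradients existing a.e.\ there.

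The key step is then the pointwise claim $F\geq0$ a.e.\ on $Y$. Fix $y$ at which $x_{0}:=\nabla\phi_{0}^{*}(y)$ and $x_{1}:=\nabla\phi^{*}(y)$ exist. By the equality case of the Legendre--Fenchel inequality (points 1--5 of Section~\ref{subsec:Recap-of-the}) one has $\phi_{0}(x_{0})=x_{0}\cdot y-\phi_{0}^{*}(y)$ and $\phi(x_{1})=x_{1}\cdot y-\phi^{*}(y)$, while the inequality itself gives $\phi(x_{0})\geq x_{0}\cdot y-\phi^{*}(y)$ and $\phi_{0}(x_{1})\geq x_{1}\cdot y-\phi_{0}^{*}(y)$. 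Subtracting the first of each pair from the second yields
\[
(\phi-\phi_{0})(x_{0})\ \geq\ \phi_{0}^{*}(y)-\phi^{*}(y)\ \geq\ (\phi-\phi_{0})(x_{1}) ,
\]
i.e.\ $F(y)\geq0$. Since $g\geq\delta$ on $Y$, this gives at once
\[
I_{g}(\phi)=\int_{Y}F\,g\,dy\ \geq\ \delta\int_{Y}F\,dy\ =\ \delta\,I(\phi)\ \geq\ \frac{\delta}{n+1}\,I(\phi) ,
\]
using $I(\phi)=\int_{Y}F\,dy\geq0$ in the last inequality. (This argument in fact proves the stronger bound $I_{g}\geq\delta I$; only the weaker constant $\delta/(n+1)$ is needed downstream.)

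The single substantive point is the pointwise estimate $F\geq0$: it is elementary, but it is the only place convexity enters, and one must be careful that $\nabla\phi_{0}^{*},\nabla\phi^{*}$ really are defined a.e.\ on $Y$ and that the Monge--Amp\`ere measures involved are honest probability measures on $\R^{n}$ against which the bounded function $\phi-\phi_{0}$ integrates — both guaranteed by membership in $\mathcal{C}_{Y}(\R^{n})_{+}$. If one prefers to stay closer to the functional $J_{g}$ defined above, an alternative route is available: the function $t\mapsto J_{g}\bigl((1-t)\phi_{0}+t\phi\bigr)$ is convex (its $\int_{Y}(\cdot)^{*}g\,dy$ part is a supremum of functions affine in $t$, the remaining part being affine), with derivative $0$ at $t=0$ and $I_{g}(\phi)$ at $t=1$, so $I_{g}(\phi)\geq J_{g}(\phi)-J_{g}(\phi_{0})$; the right-hand side equals $\int_{Y}\bigl[(\phi^{*}-\phi_{0}^{*})(y)+(\phi-\phi_{0})(\nabla\phi_{0}^{*}(y))\bigr]g\,dy$ with nonnegative integrand (again by Legendre--Fenchel), whence $J_{g}(\phi)-J_{g}(\phi_{0})\geq\delta\bigl(J(\phi)-J(\phi_{0})\bigr)$; and the Aubin-type inequality $J(\phi)-J(\phi_{0})\geq\frac{1}{n+1}I(\phi)$ — obtained by expanding both $I(\phi)$ and $J(\phi)-J(\phi_{0})$, via the integration-by-parts identity \ref{eq:I on toric var}, as combinations with positive coefficients of the same nonnegative terms $\int\partial\varphi\wedge\bar{\partial}\varphi\wedge(\omega^{\phi_{0}})^{n-1-j}\wedge(\omega^{\phi})^{j}$ — then delivers exactly $I_{g}\geq\frac{\delta}{n+1}I$.
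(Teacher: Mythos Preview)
Your main argument is correct and takes a genuinely different---and more elementary---route than the paper. By pushing the integrals forward to $Y$ via $\nabla\phi_{0}^{*}$ and $\nabla\phi^{*}$ and using the Legendre--Fenchel inequality pointwise, you obtain the representation $I_{g}(\phi)=\int_{Y}F\,g\,dy$ and $I(\phi)=\int_{Y}F\,dy$ with $F\geq0$ a.e., from which $I_{g}\geq\delta I$ follows immediately. The paper instead passes through the chain $I_{g}\geq J_{g}\geq\delta J\geq\frac{1}{n+1}I$: the first step (Lemma~\ref{lem:I g bigger than J g}) is a convexity argument along the affine segment, the second step is proved by complexifying and computing the second variation of $J_{g}$ as a Dirichlet-type energy (formula~\ref{eq:second der of J as dirichlet}), and the last step invokes Aubin's classical $J\geq I/(n+1)$. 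Your approach avoids the complexification and the auxiliary $J$-functional entirely, and in fact yields the sharper constant $\delta$ rather than $\delta/(n+1)$. The paper's route, on the other hand, situates the inequality within the standard $I$--$J$ formalism of K\"ahler geometry, which may be more natural if one is already working in that framework. Your alternative sketch via $J_{g}$ is essentially the paper's chain, with the middle step ($J_{g}\geq\delta J$) again replaced by the pointwise Fenchel argument instead of the second-variation computation.
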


To this end consider the following auxiliary functional on $\mathcal{C}_{Y}(\R^{n})_{+}:$

\[
J_{g}(\phi):=\int_{Y}\phi^{*}gdy+\int(\phi-\phi_{0})\mu_{0}
\]
To simplify the notation we will write $I_{1}=I$ and $J_{1}=J.$
We start with a number of lemmas. 
\begin{lem}
\label{lem:The-Gateaux-differential}The Gateaux differential of $J_{g}$
at $\phi$ is represented by $-MA_{g}(\phi)+\mu_{0},$ i.e. fixing
$\phi_{1}$ and $\phi_{2}$ in $\mathcal{C}_{Y}(\R^{n})_{+}$ and
setting $\phi_{t}:=\phi_{1}+t(\phi_{2}-\phi_{1})$ we have
\[
\frac{dJ_{g}(\phi_{t})}{dt}_{|t=0}=\int_{\R^{n}}(\phi_{2}-\phi_{1})\left(-MA_{g}(\phi)+\mu_{0}\right)
\]
 Moreover, $t\mapsto J_{g}(\phi_{t})$ is convex and $J_{g}\geq0.$ 
\end{lem}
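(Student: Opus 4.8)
\textbf{Proof plan for Lemma \ref{lem:The-Gateaux-differential}.}

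The plan is to compute the first variation of each of the two pieces of $J_g$ separately. The term $\int(\phi-\phi_0)\mu_0$ is affine in $\phi$, so its derivative along $\phi_t$ is immediately $\int(\phi_2-\phi_1)\mu_0$. The real content is the variation of $\phi\mapsto\int_Y\phi^*g\,dy$. Here I would use the classical fact — essentially the envelope theorem for Legendre transforms — that $\frac{d}{dt}\phi_t^*(y)=-(\phi_2-\phi_1)(x_t(y))$ at points $y$ where the supremum defining $\phi_t^*(y)$ is attained at a unique $x_t(y)=\nabla\phi_t^*(y)$, which holds for a.e.\ $y\in Y$ since $\phi_t\in\mathcal{C}_Y(\R^n)_+$ forces $\phi_t^*$ to be finite and differentiable a.e.\ on $Y$. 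Integrating against $g\,dy$ and using the push-forward description $MA_g(\phi_t)=(\nabla\phi_t^*)_*(g\,dy)$ from Section \ref{subsec:Recap-of-the} converts $\int_Y(\phi_2-\phi_1)(\nabla\phi_t^*(y))g(y)\,dy$ into $\int_{\R^n}(\phi_2-\phi_1)\,MA_g(\phi_t)$. Evaluating at $t=0$ and combining with the affine term gives the stated formula for the Gateaux differential. To pass the derivative inside the integral one needs a dominated-convergence argument: the difference quotients $\tfrac1t(\phi_t^*-\phi_0^*)$ are uniformly bounded on $Y$ because all the $\phi^*_t$ differ from $\phi_Y^*$ by bounded amounts (Lemma \ref{lem:l infty bound in terms of Sob} and the convexity of $t\mapsto\phi_t^*$), and $g\in L^1(Y)$.

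For convexity of $t\mapsto J_g(\phi_t)$ I would note that $\phi\mapsto\phi^*(y)$ is convex in $\phi$ for each fixed $y$ (a sup of affine functionals of $\phi$), hence $\phi\mapsto\int_Y\phi^*g\,dy$ is convex, and the remaining term is affine; so $J_g$ is convex along affine paths, which is exactly the assertion. Equivalently, once the first-variation formula is established, convexity of $t\mapsto J_g(\phi_t)$ is the same as monotonicity of $t\mapsto\int(\phi_2-\phi_1)(-MA_g(\phi_t)+\mu_0)$, i.e.\ $\int(\phi_2-\phi_1)(MA_g(\phi_1)-MA_g(\phi_2))\ge0$ when $\phi_2\ge\phi_1$, which follows from the comparison principle for Monge-Amp\`ere measures (if $\phi_2\ge\phi_1$ then $\partial\phi_2(\{\phi_2>\phi_1\})\subset\partial\phi_1(\{\phi_2>\phi_1\})$ up to null sets) — but the direct sup-of-affine argument is cleaner and I would use that.

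Finally, for $J_g\ge0$: the differential vanishes precisely when $MA_g(\phi)=\mu_0=MA_g(\phi_0)$, i.e.\ at $\phi=\phi_0$ (mod $\R$, by the uniqueness in Lemma \ref{lem:uniqueness and extension}), and $J_g$ is invariant under adding a constant to $\phi$ since such a shift changes $\phi^*$ by the negative constant over the unit-volume domain $Y$ while changing $\int(\phi-\phi_0)\mu_0$ by the opposite constant. Hence $\phi_0$ is a global minimum of the convex functional $J_g$ by the first-order condition, and $J_g(\phi_0)=\int_Y\phi_0^*g\,dy+0$; one checks this equals $0$ by the chosen normalization (the statement implicitly presumes $\phi_0$ — equivalently $\mu_0$ — normalized so that $\int_Y\phi_0^*g\,dy=0$, consistent with the normalization fixed in Lemma \ref{lem:l infty bound in terms of Sob}), giving $J_g\ge J_g(\phi_0)=0$.

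\textbf{Main obstacle.} The one genuinely delicate point is justifying differentiation under the integral sign for $\frac{d}{dt}\int_Y\phi_t^*g\,dy$ and identifying the pointwise derivative of $\phi_t^*$; this requires knowing that for a.e.\ $y$ the maximizer in $\phi_t^*(y)=\sup_x(x\cdot y-\phi_t(x))$ is unique and depends suitably on $t$, together with a uniform $L^\infty(Y)$ bound on the difference quotients to invoke dominated convergence against $g\in L^1$. Everything else is formal manipulation of Legendre transforms and push-forwards already recalled in Section \ref{subsec:Recap-of-the}.
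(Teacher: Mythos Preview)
Your proposal is correct and is exactly the standard argument the paper has in mind: the paper's own ``proof'' is just a citation (``essentially well-known in the literature of optimal transport\ldots a simple direct proof is given in \cite{be-be}''), and what you wrote is precisely that direct proof --- envelope theorem for $\phi\mapsto\phi^*(y)$, push-forward identity $MA_g(\phi)=(\nabla\phi^*)_*(g\,dy)$, and convexity of $\phi\mapsto\phi^*(y)$ as a supremum of affine functionals.

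One remark on the last step: you are right to flag the normalization needed for $J_g(\phi_0)=0$, but the normalization in Lemma~\ref{lem:l infty bound in terms of Sob} is $\int_Y\phi^*\,dy=0$, not $\int_Y\phi^*g\,dy=0$, so your parenthetical reference there is not quite the right one. The point is simply that the paper's definition of $J_g$ tacitly omits the additive constant $-\int_Y\phi_0^*g\,dy$ (this is needed for the assertion $J_g(0)=0$ used verbatim in the proof of Lemma~\ref{lem:I g bigger than J g}, and for the identification with Aubin's $J$ in Remark~\ref{rem:aubin}); with that constant included your argument goes through as written.
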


\begin{proof}
This is essentially well-known in the literature of optimal transport,
where $J_{g}$ appears as the Kantorovich functional (up to a change
of sign and and additive constant). A simple direct proof is given
in \cite{be-be}.
\end{proof}
\begin{rem}
\label{rem:aubin}The formula for the differential of $J_{g}$ implies
that when $g=1$ and $Y$ is a rational convex polytope, then $J_{g}$
coincides with Aubin's $J-$functional defined with respect to the
curvature form $\omega_{0}$ corresponding to $\phi_{0}$ (see \cite{aub}
for the smooth setting and \cite{bbgz} for the general singular setting). 
\end{rem}

Using the previous lemma we next establish the following 
\begin{lem}
\label{lem:I g bigger than J g}The following inequality holds: 
\[
I_{g}\geq J_{g}
\]
\end{lem}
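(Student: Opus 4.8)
The plan is to deduce $I_g\ge J_g$ directly from the convexity of $t\mapsto J_g(\phi_t)$ established in Lemma \ref{lem:The-Gateaux-differential}, together with the identification of its derivative at $t=0$ with the signed measure $-MA_g(\phi_t)+\mu_0$. Concretely, I would interpolate between $\phi_0$ and a given $\phi\in\mathcal{C}_Y(\R^n)_+$ by setting $\phi_t:=\phi_0+t(\phi-\phi_0)$ for $t\in[0,1]$, so that $\phi_0$ and $\phi_1=\phi$ are the endpoints. Applying Lemma \ref{lem:The-Gateaux-differential} with the pair $(\phi_0,\phi)$ in the roles of $(\phi_1,\phi_2)$ gives
\[
\frac{d}{dt}J_g(\phi_t)\Big|_{t=0}=\int_{\R^n}(\phi-\phi_0)\left(-MA_g(\phi_0)+\mu_0\right)=0,
\]
since $\mu_0:=MA_g(\phi_0)$ by definition.

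Next I would exploit convexity: because $t\mapsto J_g(\phi_t)$ is convex on $[0,1]$ with vanishing derivative at $t=0$, the fundamental theorem of calculus gives
\[
J_g(\phi)=J_g(\phi_1)=J_g(\phi_0)+\int_0^1\frac{d}{ds}J_g(\phi_s)\,ds=J_g(\phi_0)+\int_0^1\Big(\frac{d}{ds}J_g(\phi_s)-\frac{d}{ds}J_g(\phi_s)\Big|_{s=0}\Big)\,ds,
\]
and monotonicity of the derivative of a convex function shows the integrand is nonnegative, so $J_g(\phi)\ge J_g(\phi_0)$. Meanwhile, evaluating $J_g$ at $\phi_0$ gives $J_g(\phi_0)=\int_Y\phi_0^*\,g\,dy$, and I would want to relate this to $I_g(\phi)$. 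The cleaner route is to compute $I_g(\phi)-J_g(\phi)$ directly: expanding,
\[
I_g(\phi)-J_g(\phi)=\int_{\R^n}(\phi-\phi_0)\left(-MA_g(\phi)+\mu_0\right)-\int_Y\phi^*g\,dy-\int_{\R^n}(\phi-\phi_0)\mu_0
=-\int_{\R^n}(\phi-\phi_0)MA_g(\phi)-\int_Y\phi^*g\,dy,
\]
and I would recognize the right-hand side, via the Legendre duality $\int(\phi-\phi_0)MA_g(\phi)$ pairing and the formula $MA_g(\phi)=(\nabla\phi^*)_*(g\,dy)$, as $-J_g$ evaluated at... — at which point it is cleaner to instead simply observe $I_g=J_g+(\text{derivative term at }\phi_0)$-type bookkeeping. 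The honest statement of the plan: write $I_g(\phi)=\int_0^1\frac{d}{dt}J_g(\phi_t)\,dt$ — wait, that is not quite it either — rather, note that $I_g(\phi)$ is precisely $-\frac{d}{dt}J_g(\phi_t)|_{t=1}$ by Lemma \ref{lem:The-Gateaux-differential} applied with endpoints $(\phi,\phi_0)$, while $J_g(\phi)=J_g(\phi)-J_g(\phi_0)=\int_0^1\frac{d}{dt}J_g(\phi_t)\,dt$ since $J_g(\phi_0)=0$ (using the normalization that makes $J_g$ vanish at $\phi_0$, or subtracting that constant). Then convexity of $t\mapsto J_g(\phi_t)$ forces $\frac{d}{dt}J_g(\phi_t)|_{t=1}\ge \int_0^1\frac{d}{dt}J_g(\phi_t)\,dt$, i.e. $I_g(\phi)\ge -J_g(\phi)$; to get the stated $I_g\ge J_g$ one uses in addition that $J_g\ge 0$ together with the sign of the derivative term, or more precisely that $-\frac{d}{dt}J_g(\phi_t)|_{t=1}\ge J_g(\phi_1)-J_g(\phi_0)$ requires $\frac{d}{dt}J_g(\phi_t)|_{t=1}\le J_g(\phi_0)-J_g(\phi_1)\le 0$, which follows since $J_g\ge 0=J_g(\phi_0)$.

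So the streamlined argument is: by convexity of $f(t):=J_g(\phi_t)$ one has $f(1)-f(0)\le f'(1)$ and $f(1)-f(0)\ge f'(0)$; the first inequality, combined with $f'(1)=-I_g(\phi)$ and $f'(0)=0$ and $f(0)=0$, yields $I_g(\phi)=-f'(1)\le -(f(1)-f(0))=-J_g(\phi)+0$, while the second with $f'(0)=0$ gives $f(1)\ge 0$, recovering $J_g\ge 0$; combining $I_g\ge -J_g$ with... hmm, this gives $I_g\ge -J_g$, not $I_g\ge J_g$. The resolution must be that $f$ is convex with $f'(0)=0$, hence $f$ is \emph{increasing} on $[0,1]$ and $f'(1)\ge 0$, so $-I_g(\phi)=f'(1)\ge f(1)-f(0)=J_g(\phi)$ is false in sign — rather $f'(1)\ge f(1)-f(0)$ gives $-I_g\ge J_g$???

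I would therefore reexamine the sign in Lemma \ref{lem:The-Gateaux-differential}: the differential of $J_g$ at $\phi$ is $-MA_g(\phi)+\mu_0$, so $f'(t)=\int(\phi-\phi_0)(-MA_g(\phi_t)+\mu_0)$, and at $t=0$ this is $0$ while at $t=1$ this is $\int(\phi-\phi_0)(-MA_g(\phi)+\mu_0)=I_g(\phi)$ directly — that is, $f'(1)=I_g(\phi)$, not $-I_g(\phi)$. Then convexity gives $f(1)-f(0)\le f'(1)$, i.e. $J_g(\phi)-0\le I_g(\phi)$, which is exactly the claim. The main obstacle is thus purely bookkeeping: getting the signs in the Gateaux derivative and in the convexity inequality to line up correctly, and confirming the normalization under which $J_g(\phi_0)=0$ (which is immediate from the definition, since $J_g(\phi_0)=\int_Y\phi_0^*g\,dy+0$ — so in fact one should keep this constant and the clean statement is $J_g(\phi)-J_g(\phi_0)\le I_g(\phi)-I_g(\phi_0)=I_g(\phi)$, using $I_g(\phi_0)=0$; but since $J_g\ge 0$ and... no: $J_g(\phi_0)=\int_Y\phi_0^*g\,dy$ need not vanish). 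The correct final chain is: convexity of $f$ gives $f(1)-f(0)\le f'(1)=I_g(\phi)$; separately one shows $J_g(\phi_0)=f(0)$ can be absorbed because actually the lemma's $J_g$ is defined so that the relevant comparison is $I_g(\phi)\ge f(1)-f(0)=J_g(\phi)-J_g(\phi_0)$, and then one invokes $J_g(\phi_0)\ge 0$ (from the final sentence of Lemma \ref{lem:The-Gateaux-differential}, $J_g\ge0$) to conclude $I_g(\phi)\ge J_g(\phi)-J_g(\phi_0)\ge\,?$ — this lower-bounds $I_g$ by something possibly smaller than $J_g$. Hence I expect the actual proof in the paper either takes $\phi_0$ normalized so that $J_g(\phi_0)=0$ (legitimate, as $J_g$ and $I_g$ are unchanged under adding constants to $\phi$ in a compensating way), or directly verifies $I_g(\phi)-J_g(\phi)=J_g^{*}(\phi_0)\ge 0$ for a suitable dual functional; pinning down which normalization makes everything consistent is the one genuinely delicate point, and I would resolve it by first checking the identity $I_g(\phi_0)=0=J_g(\phi_0)$ holds under the standing normalization before running the convexity inequality $f(1)-f(0)\le f'(1)$.
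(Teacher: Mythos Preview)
Your eventual streamlined argument is correct and is exactly the paper's proof: set $f(t)=J_g(\phi_t)$ with $\phi_t=\phi_0+t(\phi-\phi_0)$, observe via Lemma~\ref{lem:The-Gateaux-differential} that $f'(t)=\int(\phi-\phi_0)(-MA_g(\phi_t)+\mu_0)$ so that $f'(1)=I_g(\phi)$, and use convexity of $f$ to get $f(1)-f(0)\le f'(1)$. The paper packages this as $I_g(t):=t f'(t)$, computes $I_g'(t)=f'(t)+t f''(t)\ge f'(t)$, and integrates from $0$; unwinding gives precisely the inequality $f'(1)\ge f(1)-f(0)$ you use.

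The one loose end you correctly flag --- whether $J_g(\phi_0)=0$ --- is real: from the displayed definition, $J_g(\phi_0)=\int_Y\phi_0^*\,g\,dy$, which need not vanish. The paper simply asserts $I_g(0)=J_g(0)=0$, consistent with its remark that $J_g$ is the Kantorovich functional ``up to \ldots\ an additive constant'' and with the claim $J_g\ge 0$ (whose minimum is at the critical point $\phi_0$); that is, $J_g$ is tacitly normalized by subtracting $\int_Y\phi_0^*\,g\,dy$. This is harmless, since both $I_g$ and the downstream comparison $J_g\ge\delta J$ (and $J\ge I/(n+1)$) are unaffected by adding the same constant to $J_g$ and $J$. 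So your plan to first check $J_g(\phi_0)=0$ under the standing normalization is the right move, and once that is absorbed your one-line convexity argument $J_g(\phi)=f(1)-f(0)\le f'(1)=I_g(\phi)$ is complete.

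As a stylistic matter: the many false starts and sign reversals (e.g.\ the spurious $f'(1)=-I_g(\phi)$) obscure what is in the end a three-line argument; it would be worth rewriting with only the final chain.
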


\begin{proof}
Fix $\phi$ in $\mathcal{C}_{Y}(\R^{n})_{+}$ and set $\phi_{t}:=\phi_{0}+t(\phi-\phi_{0})$
and $I_{g}(t):=I_{g}(\phi_{t})$ and $J_{g}(t):=J_{g}(\phi_{t}).$
By the previous lemma we have, for any $t>0.$
\[
\frac{dJ_{g}(t)}{dt}=\int_{\R^{n}}(\phi-\phi_{0})\left(-MA_{g}(\phi_{t})+\mu_{0}\right)=t^{-1}I_{g}(t)
\]
 Hence, 
\[
\frac{dI_{g}(t)}{dt}=\frac{d}{dt}(t\frac{dJ_{g}(t)}{dt})=\frac{dJ_{g}(t)}{dt}+t\frac{d^{2}J_{g}(t)}{d^{2}t}\geq\frac{dJ_{g}(t)}{dt},
\]
 using that $J_{g}(t)$ is convex, by the previous lemma. Since $I_{g}(0)=J_{g}(0)(=0)$
it follows that $I_{g}(t)\geq J_{g}(t)$ for all $t\geq0$ and in
particular for $t=1,$ which concludes the proof.
\end{proof}
We next show that $J_{g}$ is controlled from below by $J:$
\begin{lem}
Setting $\delta:=\inf_{Y}g$ we have $J_{g}\geq\delta J\geq0$
\end{lem}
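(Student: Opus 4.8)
The goal is to prove that $J_g \geq \delta J \geq 0$, where $\delta := \inf_Y g$ and the functionals are
\[
J_g(\phi) = \int_Y \phi^* g\, dy + \int (\phi - \phi_0)\mu_0, \qquad \mu_0 := MA_g(\phi_0).
\]
The plan is to split $J_g$ into the part that scales linearly in $g$ and a remainder that one can show is nonnegative. Write $g = \delta + (g - \delta)$ with $g - \delta \geq 0$ on $Y$. Then $\int_Y \phi^* g\, dy = \delta \int_Y \phi^* dy + \int_Y \phi^*(g-\delta)\,dy$, so
\[
J_g(\phi) = \delta\Big(\int_Y \phi^* dy + \int(\phi-\phi_0)\mu_0\Big) + \int_Y \phi^*(g-\delta)\,dy + (1-\delta)\int(\phi-\phi_0)\mu_0.
\]
The first bracket is \emph{not} quite $\delta J(\phi)$, because $J$ is defined with $\mu_0 = MA_g(\phi_0)$, not $MA_1(\phi_0)$ — so one must be careful about which reference measure appears. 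The cleanest route is to work with a single fixed $\phi_0 \in \mathcal{C}_Y(\mathbb{R}^n)_+$ throughout (the one fixed before Proposition \ref{prop:red to}), so that $\mu_0 = MA_g(\phi_0)$ is the \emph{same} measure in both $J_g$ and $J$; that is, $J(\phi) := \int_Y \phi^* dy + \int(\phi - \phi_0)\mu_0$ with this same $\mu_0$.

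With that convention, $J_g(\phi) - \delta J(\phi) = \int_Y \phi^*(g - \delta)\,dy + (1-\delta)\int(\phi - \phi_0)\mu_0$. This is where the normalization comes in: both $J_g$ and $J$ are invariant under adding a constant to $\phi$ (since $(\phi + c)^* = \phi^* - c$ shifts $\int_Y \phi^* dy$ by $-c\,\mathrm{Vol}(Y) = -c$ while $\int(\phi + c - \phi_0)\mu_0$ shifts by $+c$, using that $\mu_0$ and $\nu$ are probability measures). So I may normalize $\phi$ however is convenient. The natural choice, matching Lemma \ref{lem:l infty bound in terms of Sob}, is $\int_Y \phi^* dy = 0$. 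Under that normalization $J(\phi) = \int(\phi - \phi_0)\mu_0$, and I would further want to know the sign of this quantity. By Lemma \ref{lem:The-Gateaux-differential} (applied with $g=1$), $J = J_1 \geq 0$, so with the normalization $\int_Y \phi^* dy = 0$ we get $\int(\phi - \phi_0)\mu_0 \geq 0$. Hence $(1-\delta)\int(\phi - \phi_0)\mu_0 \geq 0$ provided $\delta \leq 1$, which holds because $\int_Y g\,dy = 1$ forces $\inf_Y g \leq 1$ (as $\mathrm{Vol}(Y) = 1$). Meanwhile $\int_Y \phi^*(g-\delta)\,dy$: this need not be nonnegative pointwise-after-normalization, but its sign is exactly what the normalization $\int_Y \phi^* dy = 0$ does \emph{not} control.

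So the main obstacle is controlling $\int_Y \phi^*(g-\delta)\,dy$, and I expect the resolution is to choose the normalization more cleverly. Rather than normalizing $\int_Y \phi^* dy = 0$, normalize so that $\int_Y \phi^* g\,dy = 0$; then $J_g(\phi) = \int(\phi - \phi_0)\mu_0$ which, since $J_g \geq 0$ by Lemma \ref{lem:The-Gateaux-differential}, is $\geq 0$ — but that's circular for getting $J_g \geq \delta J$. The honest approach: pick the normalization $\int_Y \phi^*\,dy = 0$ and instead of bounding $\int_Y \phi^*(g-\delta)\,dy$ by its sign, observe that we can \emph{add it back}: since $J$ as defined uses $\mu_0 = MA_g(\phi_0)$ and $J_g$ uses the same $\mu_0$, and since the whole reduction in Proposition \ref{prop:red to} only needs $J_g \geq c\, I$ for \emph{some} positive constant, I would actually prove the chain by combining $J_g \geq \delta J$ with $J \geq 0$ separately, establishing $J_g - \delta J \geq 0$ via: $J_g - \delta J = \int_Y \phi^* (g - \delta)\, dy + (1 - \delta) \int (\phi - \phi_0)\, \mu_0$, then use convexity of $t \mapsto J_g(\phi_0 + t(\phi - \phi_0))$ together with the identity $\tfrac{d}{dt} J_g(\phi_t)|_{t=0} = \int(\phi - \phi_0)(-\mu_0 + \mu_0) = 0$ from Lemma \ref{lem:The-Gateaux-differential}, which gives that $t = 0$ is a minimum of $J_g(\phi_t)$, hence $J_g(\phi) \geq J_g(\phi_0) = 0$ directly, and similarly $J(\phi) \geq 0$. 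The remaining term $\int_Y \phi^*(g-\delta)\,dy$ I would handle by noting it equals $J_{g-\delta}(\phi) - \int(\phi - \phi_0)\mu_0$ where $J_{g - \delta}$ is the Kantorovich-type functional for the nonnegative density $g - \delta$ (with reference measure $\mu_0$), and $J_{g-\delta}$ is convex with differential $-MA_{g-\delta}(\phi) + \mu_0$ — so a parallel argument (exactly Lemma \ref{lem:I g bigger than J g}'s scaling trick) closes it. The delicate point throughout, which I'd flag as the crux, is keeping the reference measure $\mu_0$ fixed across all the functionals $J$, $J_g$, $J_{g-\delta}$ so the linearity-in-$g$ decomposition is legitimate, and checking that the sub-probability (not probability) normalization of $g - \delta$ doesn't break the invariance arguments — here one uses $\mathrm{mass}(g - \delta) = 1 - \delta \geq 0$ and rescales.
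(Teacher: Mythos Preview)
Your decomposition idea is the right instinct, but the explicit decision to fix a \emph{single} reference measure $\mu_0 = MA_g(\phi_0)$ for all of $J_g$, $J$, and $J_{g-\delta}$ is exactly what derails the argument. In the paper, $J := J_1$ carries its own reference measure $MA(\phi_0)$ (not $MA_g(\phi_0)$): this is required both for the vanishing $\tfrac{d}{dt}J(\phi_t)\big|_{t=0}=0$ that the paper invokes, and for the downstream Aubin inequality $J\geq I/(n+1)$ in the conclusion of Proposition~\ref{prop:red to}. With your redefinition, the derivative of $J$ at $\phi_0$ is $\int(\phi-\phi_0)\bigl(MA_g(\phi_0)-MA(\phi_0)\bigr)\neq 0$, so Lemma~\ref{lem:The-Gateaux-differential} no longer yields $J\geq 0$. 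The same defect recurs in your closing step: your $J_{g-\delta}$, built with reference $\mu_0 = MA_g(\phi_0)$, has first variation $-MA_{g-\delta}(\phi_0)+MA_g(\phi_0)=\delta\,MA(\phi_0)\neq 0$ at $\phi_0$, so convexity alone cannot give the nonnegativity you need, and the ``parallel argument'' does not close.

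Had you let each $J_h$ carry its matching reference $MA_h(\phi_0)$, the decomposition would have been immediate: since $MA_g = \delta\,MA + MA_{g-\delta}$, one gets $J_g - \delta J = J_{g-\delta}$ identically, and $J_{g-\delta}\geq 0$ then follows from convexity of $t\mapsto\int_Y \phi_t^*(g-\delta)\,dy$ (using $g-\delta\geq 0$) together with the now-genuine vanishing of the first derivative at $t=0$. That is in fact a shorter route than the paper's, which instead compares \emph{second} derivatives along $\phi_t$: it establishes the formula $\tfrac{d^2}{dt^2}J_g(\phi_t)=\int |\nabla(\phi-\phi_0)|_{g^{\phi_t}}^{2}\,MA_g(\phi_t)$, reads off $\tfrac{d^2}{dt^2}J_g\geq \delta\,\tfrac{d^2}{dt^2}J$ from the pointwise bound $MA_g\geq\delta\,MA$, and integrates twice using $J_g'(0)=J'(0)=0$ and $J_g(0)=J(0)=0$.
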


\begin{proof}
We continue with the notation in the proof of the previous lemma.
By a standard approximation argument we may as well assume that $\phi$
and $\phi_{0}$ are smooth and strictly convex. Moreover, as before
it will be enough to consider the case when $Y$ is a rational polytope.
First observe that it will be enough to prove the following claim:
\[
\text{Claim:\,}\frac{d^{2}J_{g}(t)}{d^{2}t}\geq\delta\frac{d^{2}J(t)}{d^{2}t}\geq0,
\]
Indeed, by Lemma \ref{lem:The-Gateaux-differential} the derivatives
of both $J_{g}(t)$ and $J(t)$ vanish at $t=0.$ As a consequence,
if the claim holds, then $\frac{dJ_{g}(t)}{dt}\geq\delta\frac{dJ(t)}{dt}$
for all $t$ and since $J_{g}(0)=J(0)=0$ it follows that $J_{g}(t)\geq\delta J(t)$
for all $t$ and in particular for $t=1,$ proving the lemma. In order
to prove the claim above it is enough to establish the following formula:
\begin{equation}
\frac{d^{2}J_{g}(\phi_{t})}{dt}=-\int_{\R^{n}}|\nabla\phi_{t}|_{g^{\phi_{t}}}^{2}MA_{g}(\phi_{t}).\label{eq:second der of J as dirichlet}
\end{equation}
To this end first consider a general curve $\phi_{t}(x)$ in $\mathcal{C}_{Y}(\R^{n})_{+}$
such that $\Phi(x,t):=\phi_{t}(x)$ is smooth on $\R^{n+1}.$ We complexify
$\R^{n}$ by $\C^{*n}$ (as in Section \ref{subsec:The-complex-torus})
and $\R$ by $\C$ (in the usual way). Denote by $(z,\tau)$ the corresponding
complex coordinates on $\C^{*n}\times\C.$ Denoting by $\pi$ the
natural projection from $\C^{*n}\times\C$ to $\C^{*n}$ we have the
following general formula
\[
\frac{\partial^{2}J_{g}(\phi_{\tau})}{\partial\tau\partial\bar{\tau}}id\tau\wedge d\bar{\tau}=-\pi_{*}(g(\nabla_{x}\pi^{*}\phi_{t})MA_{\C}(\Phi))
\]
on $\C,$ where $MA_{\C}(\Phi)$ denotes complex Monge-Ampère measure
of the smooth function $\Phi$ in $\C^{*n}\times\C.$ This formula
is a special case of the more general formula \cite[formula 2.11]{b-n}.
Next, we note that, if $\phi_{t}$ is affine, then $MA_{\C}(\Phi)\leq0.$
Indeed, a direct calculation gives 
\[
-MA_{\C}(\Phi)=\frac{i}{2\pi}(\pi^{*}\omega^{\phi_{t}})^{n}\wedge\partial_{z}(\phi-\phi_{0})\wedge\bar{\partial_{z}}(\phi-\phi_{0})\wedge id\tau\wedge d\bar{\tau}
\]
Using formula \ref{eq:Hessian norm as complex} this concludes the
proof of formula \ref{eq:sec bd cond intro}. 
\end{proof}

\subsubsection{Conclusion of proof of Prop \ref{prop:red to}}

Combing the previous lemmas with Lemma \ref{lem:I g bigger than J g}
gives $I_{g}\geq J_{g}\geq\delta J.$ Hence the proof is concluded
by noting that $J\geq I/(n+1).$ In the complex setting this is a
well-known inequality in the Kähler geometry literature which goes
back to Aubin \cite{aub}. For a simple proof see \cite[formula 2.7]{bbgz}.
The present real setting then follows from a complexification and
approximation argument, just as before. 

\subsection{Proof of Theorems \ref{thm:thm H1 intro}, \ref{thm:(General-case) intro} }

First note that the following simple lemma holds, where $\ensuremath{d(Y)}$
denotes the diameter of $Y:$
\begin{lem}
\label{lem:I g smaller than W one}Assume that $\phi_{0}$ and $\phi_{1}$
are convex functions on $\R^{n}$ whose gradient images are contained
in $\bar{Y}.$ Then 
\[
I_{g}(\phi_{0},\phi_{1})\leq d(Y)W_{1}\left(MA_{g}(\phi_{0}),MA_{g}(\phi_{1})\right).
\]
 
\end{lem}

\begin{proof}
This follows directly from the Kantorovich-Rubinstein formula
\begin{equation}
W_{1}(\mu_{0},\mu_{1}):=\sup_{u\in\text{Lip \ensuremath{_{1}(\R^{n})}}}\int u(\mu_{0}-\mu_{1})\label{eq:W_1 as sup}
\end{equation}
 where the sup runs over all Lipschitz continuous functions on $X$
with Lipschitz constant one (by taking $u=(\phi_{1}-\phi_{0})/\text{\ensuremath{d(Y)}).}$ 
\end{proof}
Applying the key inequalities in Theorem \ref{thm:analytic ineqs text}
to $\phi_{0}=\phi$ and $\phi_{1}=\phi_{h},$ and invoking the previous
lemma, all that remains is thus to verify that 
\begin{equation}
W_{1}(\mu,\mu_{h})\leq h.\label{eq:W one smaller than h}
\end{equation}
 To this end we rewrite the integral appearing in the right hand side
of formula \ref{eq:W_1 as sup} as 
\begin{equation}
\int u(\mu-\mu_{h})=\sum_{i=1}^{N}\int_{C_{i}}(u-u(x_{i})\mu\label{eq:integr u against difference mu}
\end{equation}
Since $u\in\text{Lip \ensuremath{_{1}(X)}}$ we trivially have
\[
x\in C_{i}\implies u(x)-u(x_{i})\leq d(C_{i})\leq h,
\]
 where the last inequality follows directly from the very definition
of $h.$ Hence, $W_{1}(\mu_{0},\mu_{1})$ is bounded from above by
$h$ times the total mass of $\mu,$ which proves the inequality \ref{eq:W one smaller than h}. 

Similarly, combining the analytic inequalities in Theorem \ref{thm:analytic ineqs text}
with Lemma \ref{lem:I g smaller than W one} and the inequality \ref{eq:W one smaller than h},
concludes the Proof of Theorems \ref{thm:thm H1 intro}, \ref{thm:(General-case) intro} 

\subsection{\label{subsec:Other-discretization-schemes}Alternative discretization
schemes when $\mu$ has a density}

Assume for simplicity that the domain $X$ has been normalized to
have unit volume and fix a sequence of point clouds $(x_{1},..x_{N})$
on $X.$ In the case when $\mu$ has a density $f$ it may, from a
computational point of view, by more convenient to consider discretizations
of $\tilde{\mu}_{h}$ of the form 
\begin{equation}
\tilde{\mu}_{h}:=\frac{\sum_{i=1}^{N}f(x_{i})w_{i}\delta_{x_{i}}}{\sum_{i=1}^{N}f(x_{i})w_{i}}:=\frac{f\lambda_{N}}{\int f\lambda_{N}}\label{eq:mu h tilde}
\end{equation}
 for appropriate weights $w_{i}$ (independent of $\mu).$ For example,
a convenient choice is to take $w_{i}$ to be equal to the volume
of the intersection with $X$ of the Voronoi cell corresponding to
$x_{i}.$ We claim that the result in Theorem \ref{thm:thm H1 intro}
then still holds if $f$ is Lipschitz continuous. Indeed, by the proof
of Theorem \ref{thm:thm H1 intro}, it is enough to show that 
\[
W_{1}(\mu,\tilde{\mu}_{h})\leq Ch,
\]
which, in turn, will follow from 
\[
W_{1}(dx,\lambda_{N})\leq h.
\]
But the latter inequality follows from the identity \ref{eq:integr u against difference mu}
applied to $\mu=dx.$ Similarly, if $f$ is merely assumed to be in
the Hölder space $C^{\alpha}(X)$ for $\alpha<1$ then the same argument
reveals the an analog of Theorem \ref{thm:thm H1 intro} holds, with
the rate $Ch^{1/2}$ replaced by $Ch^{\alpha/2}.$

Finally, in the case when the point cloud $\{x_{i}\}$ is equal to
the intersection of $X$ with the grid $(\Z N^{-1/n})^{n}$ and the
distance between the point cloud and the boundary of $X$ is of the
order $O(N^{-1/n}),$ then a slight variant of the previous argument
reveals that the same results as above hold, with $h:=N^{-1/n},$
when all weights $w_{i}$ in formula \ref{eq:mu h tilde} are taken
to be equal to $1/N.$

\section{\label{subsec:Relations-to-quantative}Quantitative stability of
optimal transport maps}

Denote by $\mathcal{P}_{ac}(\R^{n})$ the space of all probability
measures on $\R^{n},$ which are absolutely continuous with respect
to Lebesgue measure. Given $\mu$ and $\nu$ in $\mathcal{P}_{ac}(\R^{n})$
with compact support $\overline{X}$ and $\overline{Y},$ respectively,
there exist optimal transport maps $T_{\mu}^{\nu}$ and $T_{\nu}^{\mu}$
in $L^{\infty}(X,Y)$ and $L^{\infty}(Y,X)$ transporting $\mu$ to
$\nu$ and $\nu$ to $\mu,$ respectively \cite{br,v1}. The maps
are uniquely determined and inverses of each others almost every where
with respect to $\mu$ and $\nu,$ respectively. 

\subsection{Proof of Theorems \ref{thm:(Quantitative-stability--first intro},
\ref{thm:(Quantitative-stability--second intro}}

By Brenier's theorem \cite{br,v1} we can write $T_{\mu_{i}}^{\nu}=\nabla\phi_{i}$
for a convex function $\phi_{i}$ on $X$ such that $\overline{\partial\phi_{i}(X)}=\overline{Y.}$
The first inequality in Theorem \ref{thm:(Quantitative-stability--first intro}
then follows directly from the first inequality in Theorem \ref{thm:analytic ineqs text},
combined with the inequality in Lemma \ref{lem:I g smaller than W one}.
To prove the second inequality in \ref{thm:(Quantitative-stability--first intro}
first note that combining first inequality in Theorem \ref{thm:analytic ineqs text}
with the Cauchy-Schwartz inequalities yields
\begin{equation}
\left\Vert \nabla\phi_{0}-\nabla\phi_{1}\right\Vert _{L^{2}(X)}^{2}\leq n(n+1)C_{0}^{n-1}\delta^{-1}\left\Vert \phi_{0}-\phi_{1}\right\Vert _{L^{2}(X)}\left\Vert d\mu_{0}/dx-d\mu_{1}/dx\right\Vert _{L^{2}(X)}\label{eq:proof of quant}
\end{equation}
Applying, the Poincaré inequality to $\phi_{0}-\phi_{1}$ (whose integral
with respect to $(X,dx)$ may be assumed to vanish) gives $\left\Vert \phi_{0}-\phi_{1}\right\Vert _{L^{2}(X)}\leq C_{P}\left\Vert \nabla\phi_{0}-\nabla\phi_{1}\right\Vert _{L^{2}(X)}.$
Hence, dividing both sides of the inequality \ref{eq:proof of quant}
with $\left\Vert \nabla\phi_{0}-\nabla\phi_{1}\right\Vert _{L^{2}(X)}$
concludes the proof of Theorem \ref{thm:(Quantitative-stability--first intro}.
Finally, Theorem \ref{thm:(Quantitative-stability--second intro}
is proved in the same way, but now instead using the second inequality
in Theorem \ref{thm:analytic ineqs text}.

\subsection{\label{subsec:Sharpness-of-the}Sharpness of the exponent $1/2$
in the uniformly convex case}

We next show that the exponent $1/2$ in the first inequality in Theorem
\ref{thm:(Quantitative-stability--first intro} is optimal. We take
$X$ and $Y$ to both be equal to $(]0,1[)^{n}\subset\R^{n}$ and
$\nu=dy.$ 
\begin{lem}
There exists a curve $\phi_{\epsilon}$ of convex functions such that
$\overline{\partial\phi_{\epsilon}(X)}=\overline{Y}$ and $\phi_{0}$
is uniformly convex satisfying
\[
\left\Vert \nabla\phi_{\epsilon}-\nabla\phi_{0}\right\Vert _{L^{2}(X)}=C\left(W_{1}\left(MA(\phi_{\epsilon}),MA(\phi_{0})\right)\right)^{1/2}
\]
for a constant $C>0$ independent of $\epsilon.$ Moreover, $W_{1}\left(MA(\phi_{\epsilon}),MA(\phi_{0})\right)$
is non-zero and tends to zero as $\epsilon\rightarrow0.$ 
\end{lem}

\begin{proof}
First consider the case when $n=1.$ Fix a smooth convex function
$\phi_{1}$ on $[0,1],$ not equal to $|x|+c$ for any constant $c,$
such that $\overline{\partial\phi_{1}(]0,1[)}=[0,1]$ and define
\[
\phi_{\epsilon}(x):=\frac{1}{2}\epsilon\phi_{1}(\frac{x}{\epsilon})+\frac{1}{2}|x|^{2},\,\,\,\phi_{0}(x):=\frac{1}{2}|x|+\frac{1}{2}|x|^{2},
\]
where the second term ensures uniform convexity. Denote by $S_{\epsilon}$
the scaling map on $\R$ defined by $x\mapsto\epsilon x.$ Then, for
$\epsilon>0,$ 
\[
2\left(MA(\phi_{\epsilon})-MA(\phi_{0})\right)=(S_{\epsilon})_{*}(MA(\phi_{1})-\delta_{0}.
\]
Hence, using the dual representation \ref{eq:W_1 as sup} for $W_{1},$
\[
W_{1}\left(MA(\phi_{\epsilon}),MA(\phi_{0})\right)=\frac{1}{2}W_{1}\left((S_{\epsilon})_{*}(MA(\phi_{1}),\delta_{0}\right)=\frac{1}{2}\int|x|(S_{\epsilon})_{*}(MA(\phi_{1}))=\frac{\epsilon}{2}\int|x|MA(\phi_{1}).
\]
 On the other hand, denoting by $H(x)$ the sign of $x,$ we have
$\left\Vert \nabla\phi_{\epsilon}-\nabla\phi_{0}\right\Vert _{L^{2}[0,1]}^{2}=$
\[
=\frac{1}{2^{2}}\int_{0}^{1}\left|\nabla_{x}\epsilon\phi_{1}(\frac{x}{\epsilon})-H(x)\right|^{2}dx=\frac{1}{2^{2}}\int_{0}^{1}\left|\phi_{1}'(\frac{x}{\epsilon})-H(x)\right|^{2}dx=\epsilon\frac{1}{2^{2}}\int_{0}^{1}\left|\phi'(x)-H(x)\right|^{2}dx.
\]
 This proves the equality in the lemma when $n=1.$ Finally, for $n>1$
we simply set 
\[
\psi_{\epsilon}(x):=\phi_{\epsilon}(x_{1})+\sum_{i=2}^{n}\frac{1}{2}|x_{i}|^{2},\,\,\,\psi_{0}(x):=\phi_{0}(x)+\sum_{i=2}^{n}\frac{1}{2}|x_{i}|^{2}.
\]
 Then $\left\Vert \nabla\psi_{\epsilon}-\nabla\psi_{0}\right\Vert _{L^{2}([0,1]^{n})}=\left\Vert \nabla\phi_{\epsilon}-\nabla\phi_{0}\right\Vert _{L^{2}([0,1])}.$
Moreover, since $MA(\psi_{\epsilon})=\partial_{x_{1}}^{2}\phi_{\epsilon}(x_{1})dx$
it follows from the representation \ref{eq:W_1 as sup} that $W_{1}\left(MA(\psi_{\epsilon}),MA(\psi_{0})\right)=W_{1}\left(MA(\phi_{\epsilon}),MA(\phi_{0})\right)_{\R}.$
Hence, the curve $\psi_{\epsilon}$ satisfies the equality in the
lemma on $X=(]0,1[)^{n}.$ 

Now, assume that the first inequality in Theorem \ref{thm:(Quantitative-stability--first intro}
holds with the exponent $1/2$ replaced by $\alpha$ for some given
$\alpha>0$ and with a constant $c$ only depending on a strict power
lower bound on the uniform convexity of the convex potential of $T_{0}.$
This implies that there exists a constant $C_{0},$ only depending
on $\phi_{0},$ such that 
\[
\left\Vert \nabla\phi_{1}-\nabla\phi_{0}\right\Vert _{L^{2}(X)}\leq C\left(W_{1}\left(MA(\phi_{1}),MA(\phi_{0})\right)\right)^{\alpha}
\]
for any convex functions $\phi_{i}$ such that $\overline{\partial\phi_{\epsilon}(X)}=\overline{Y}$
with $\phi_{0}$ uniformly convex. Indeed, this follows from applying
the inequality in question to regularizations of $\nabla\phi_{0}$
and $\nabla\phi_{1}.$ But then the previous lemma forces $\alpha\leq1/2,$
as desired. 
\end{proof}

\subsection{\label{subsec:Variations-with-respect}Variations with respect to
the source vs the target measure}

The two different types of quantitative stability results discussed
in Sections \ref{subsec:Comparison-with-previous}, \ref{subsec:Relations-to-duality}
can be related thanks to the following lemma, essentially contained
in the proof of \cite[Cor 2.6]{m-d-c}:
\begin{lem}
Assume that there exist $\alpha_{1}>0$ and $C_{1}$ such that for
any $\mu_{0},\mu_{1}\in\mathcal{P}_{ac}(X)$ 
\[
\left\Vert T_{\mu_{0}}^{\nu}-T_{\mu_{1}}^{\nu}\right\Vert _{L^{2}(X,dx)}\leq C_{1}W_{1}(\mu_{0},\mu_{1})^{\alpha_{1}}.
\]
 Then there exists $A_{1}>0$ such that for any any $\mu_{0},\mu_{1}\in\mathcal{P}_{ac}(X),$
\[
\left\Vert T_{\nu}^{\mu_{0}}-T_{\nu}^{\mu_{1}}\right\Vert _{L^{2}(Y,dy)}\leq A_{1}W_{1}(\mu_{0},\mu_{1})^{\alpha_{1}/(n+2)}
\]
Conversely, assume that there exist $\alpha_{2}>0$ and $A_{2}>0$
such that for any $\mu_{0},\mu_{1}\in\mathcal{P}_{ac}(X)$
\[
\left\Vert T_{\nu}^{\mu_{0}}-T_{\nu}^{\mu_{1}}\right\Vert _{L^{2}(X,dx)}\leq A_{2}W_{1}(\mu_{0},\mu_{1})^{\alpha_{2}}.
\]
 Then there exists a constant $C_{2}>0$ such that for any $\mu_{0},\mu_{1}\in\mathcal{P}_{ac}(X)$
\[
\left\Vert T_{\mu_{0}}^{\nu}-T_{\mu_{1}}^{\nu}\right\Vert _{L^{2}(X,dx)}\leq C_{2}W_{1}(\mu_{0},\mu_{1})^{\alpha_{2}/(n+2)}
\]
\end{lem}

\begin{proof}
Recall that, using the notation in the proof of Theorems \ref{thm:(Quantitative-stability--first intro},
\ref{thm:(Quantitative-stability--second intro} above, we can express
$T_{\mu_{0}}^{\nu}=\nabla\phi_{i}$ and $T_{\nu}^{\mu_{0}}=\nabla\phi_{i}^{*},$
where $\phi^{*}$ denotes the Legendre transform of $\phi.$ The proof
of the lemma is now obtained by combining the fact that the Legendre
transform preserves the $L^{\infty}-$norm together with the following
two well-known inequalities on a bounded Lipschitz domain $\Omega\subset\R^{n}$
for Lipschitz continuous functions $u_{0}$ and $u_{1}$ with Lip
constant one, assumed convex in the second inequality: 
\[
\left\Vert u_{0}-u_{1}\right\Vert _{L^{\infty}(\Omega)}\leq A_{1}\left\Vert u_{0}-u_{1}\right\Vert _{L^{2}(\Omega)}^{2/(n+2)},\,\,\,\left\Vert \nabla u_{0}-\nabla u_{1}\right\Vert _{L^{2}(\Omega)}\leq A_{2}\left\Vert u_{0}-u_{1}\right\Vert _{L^{\infty}(\Omega)}^{1/2}
\]
together with the Poincaré inequality on $\Omega.$ We recall that
the first inequality for $u_{0}-u_{1}$ above is a special case of
the Gagliardo\textendash Nirenberg inequalities (see \cite[Lemma 5.2]{lo}
for a direct simple proof) and the second one is shown in \cite[Thm 22]{c-c-s-l}).
\end{proof}

\section{\label{sec:Formulation-in-terms}Formulation in terms of computational
geometry}

In this section we show how to use Theorem \ref{thm:thm H1 intro}
to obtain a quantitative approximation of the solution $\phi$ to
the second boundary problem for the Monge-Ampère operator, using computational
geometry, as developed in \cite{au,a-h-a,me,gu et al,l-s,k-m-t}.
A comparison with the standard notation from computational geometry
and semi-discrete optimal transport is provided in Section \ref{subsec:Relations-to-duality}
and Section \ref{subsec:Comparison-with-semi-discrete}, respectively.

Assume given a discrete probability measure on $\R^{n}$ and a convex
bounded domain $Y$ of unit volume. In order to facilitate the comparison
with the setup considered in the previous sections (in particular
Section \ref{subsec:Discretization-using-semi-discre}) the discrete
probability measure in question will be denoted by $\mu_{h}:$ 

\[
\mu_{h}=\sum_{i=1}^{N}f_{i}\delta_{x_{i}},\,\,\,\,x_{i}\in\R^{n},f_{i}:=\mu_{h}(\{x_{i}\}).
\]
 The index $h$ is thus used a reminder that the measure $\mu_{h}$
is discrete. Let $\phi_{h}$ be the (finite) convex function on $\R^{n}$
(uniquely determined up to an additive constant) by 
\[
MA_{g}(\phi_{h})=\mu_{h},\,\,\,\,\,\overline{(\partial\phi_{h})(\R^{n})}=\overline{Y}
\]
Set 
\[
\boldsymbol{\phi}_{h}:=(\phi_{h}(x_{1}),...,\phi_{h}(x_{N}))\in\R^{N},
\]
 which we identify with a function on $\R^{n}$ which is equal to
$\phi_{h}$ on $\{x_{1},...,x_{N}\}$ and $\infty$ elsewhere. We
will denote by $X_{h}$ the bounded convex polytope defined as the
convex hull of $\{x_{1},...,x_{N}\}:$ 
\[
X_{h}:=\text{conv}\{x_{1},..,x_{N}\}
\]
We will refer to the following equation for a vector $\boldsymbol{\phi\in}\R^{N}$
as the \emph{discrete Monge-Ampère equation }(associated to the discrete
measure $\mu_{h}$ on $\R^{n}$ and the measure $\nu$ on $Y):$ 
\begin{equation}
\text{Vol \ensuremath{_{\nu}(F_{i}^{\boldsymbol{\phi}^{*}}\cap Y)=f_{i}}}\label{eq:f i eqial integral over F i}
\end{equation}
where $(F_{i}^{\boldsymbol{\phi}^{*}})_{i=1}^{N}$ are the cells in
the partition of $\R^{n}$ induced by the piecewise affine function
$\boldsymbol{\phi}^{*}$(see point 6 and 7 in Section \ref{subsec:Recap-of-the}).

By the following proposition the previous equation is solved by the
vector $\boldsymbol{\phi}_{h}$ and moreover, the graph of $\phi_{h}$
over $X_{h}$ is simply the convex hull of the discrete graph over
$\{x_{1},..,x_{N}\}$ of $\boldsymbol{\phi}_{h}.$ 
\begin{prop}
\label{prop:structure of sol over X h}Denote by $\psi_{h}:=\phi_{h}^{*}$
the Legendre transform of $\phi.$ Then 
\begin{itemize}
\item The vector $\boldsymbol{\phi}_{h}$ solves the discrete Monge-Ampère
equation \ref{eq:f i eqial integral over F i} and on $\overline{Y}$
the convex function $\psi_{h}$ is piecewise affine and given by
\begin{equation}
\psi_{h}(y)=\boldsymbol{\phi}_{h}^{*}(y):=\left(\max_{\{x_{i}\}}x_{i}\cdot y-\phi(x_{i})\right)\,\,\,\,\,y\in\overline{Y}\label{eq:psi h as leg of discr}
\end{equation}
\item On the convex hull $X_{h}$ of $\{x_{1},...,x_{N}\}$ the convex function
$\phi_{h}$ is piecewise affine and given by 
\begin{equation}
\phi_{h}(x)=\left(\max_{\{y_{i}\}}x\cdot y_{i}-\psi_{h}(y_{i})\right)\,\,\,\,\,x\in X_{h},\label{eq:phi h as leg of discr}
\end{equation}
 where $\{y_{i}\}_{i=1}^{M}$ runs over the vertices in the polyhedral
decomposition of $Y$ determined by the convex piecewise affine function
$\psi_{h}$ on $Y.$ Moreover, the graph of $\phi$ over $X_{h}$
is equal to the convex hull of the graph of the discrete function
$\phi(x_{i})$ over $X_{h}.$ 
\item The following duality relations hold: 
\[
\overline{(\partial\phi_{h})(X_{h})}=\overline{Y}
\]
 and the multi-valued maps $\partial\phi_{h}$ and $\partial\psi_{h},$
which are inverses to each-other, yield a bijection between the facets
and the vertices of the induced tessellations of $X_{h}$ and $\overline{Y}.$
\end{itemize}
\end{prop}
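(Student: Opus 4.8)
The plan is to establish Proposition \ref{prop:structure of sol over X h} by combining Lemma \ref{lem:uniqueness and extension} with the purely combinatorial/convex-analytic facts about piecewise affine convex functions recorded in points 6--8 of Section \ref{subsec:Recap-of-the}, applied to the Legendre transform $\psi_h = \phi_h^\ast$ rather than to $\phi_h$ directly. The starting observation is that $MA_g(\phi_h) = \mu_h = \sum_i f_i \delta_{x_i}$ is a finitely supported measure, hence by point 8 (read in the dual direction, i.e. for $\psi_h$ in place of $\phi$) the function $\psi_h$ must be piecewise affine on the set $\overline{(\partial\psi_h)(\R^n)}$, which by point 3 and the mass-balance argument from the proof of Lemma \ref{lem:uniqueness and extension} equals $\overline X_h$ --- actually one must be a little careful here, since $(\partial\phi_h)(\R^n) \subset \overline Y$ by construction and equality $\overline{(\partial\phi_h)(\R^n)} = \overline Y$ follows because $\mu_h$ and $\nu$ have equal mass. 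So first I would run exactly the argument in the proof of Lemma \ref{lem:uniqueness and extension}: set $\psi_h := (\chi_{\overline{X_h}}\phi_h)^\ast$, note that $MA(\psi_h)$ is supported on the finite set $\{x_1,\dots,x_N\}$ via the reciprocity $y \in \partial\phi_h(x) \iff x \in \partial\psi_h(y)$ (point 2), and conclude by point 8 that $\psi_h$ is piecewise affine on $\overline Y$ with the affine pieces labelled by the $x_i$. This gives formula \eqref{eq:psi h as leg of discr}, since the Legendre transform of a finitely-supported-graph function is exactly the max in point 6.

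Next I would dualize once more. Applying point 6 to $\psi_h$ on $\overline Y$: it is the max of finitely many affine functions $y \mapsto x_i \cdot y - \phi_h(x_i)$, so its polyhedral decomposition of $\overline Y$ has some finite vertex set $\{y_j\}_{j=1}^M$ (point 7), and by point 8 applied to $\psi_h$ we get $\operatorname{supp}(MA(\psi_h)) \subset \{x_i\}$ together with the facet/vertex incidence description. Taking Legendre transform back, $\phi_h = (\chi_{\overline Y}\psi_h)^\ast$ on $X_h$ --- here I need that $\chi_{\overline Y}\psi_h$ has the same Legendre transform as $\chi_{\overline{X_h}}\phi_h$ restricted appropriately, which is point 5 combined with $\overline{(\partial(\chi_{\overline{X_h}}\phi_h))(\R^n)} \Subset \overline Y$, exactly as in the proof of Lemma \ref{lem:uniqueness and extension}. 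This yields \eqref{eq:phi h as leg of discr}, and the statement that the graph of $\phi_h$ over $X_h$ is the convex hull of the discrete graph is just the geometric content of ``$\phi_h$ is the largest convex function $\le \phi_h$ at the points $x_i$'', i.e. $\phi_h|_{X_h} = \bigl(\chi_{\overline{X_h}}\phi_h + \sum\text{(point masses)}\bigr)^{\ast\ast}$ read off from $\psi_h = \boldsymbol{\phi}_h^\ast$. For the duality relations in the third bullet I would just cite points 2, 7, 8: the multivalued maps $\partial\phi_h$ and $\partial\psi_h$ are inverse to each other by point 2, and the statement that they swap facets with vertices of the two dual tessellations is the standard normal-fan duality, which is precisely what point 8 asserts once applied to both $\phi_h$ (on $X_h$) and $\psi_h$ (on $\overline Y$). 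Finally, the discrete Monge-Amp\`ere equation \eqref{eq:f i eqial integral over F i} is immediate: $f_i = \int_{C_i}\mu_h = MA_g(\phi_h)(\{x_i\}) = \int_{(\partial\phi_h)(x_i)} g\,dy = \int_{\partial\phi_h(x_i)\cap Y} g\,dy$, and by the reciprocity $\partial\phi_h(x_i) = F_i^{\boldsymbol{\phi}_h^\ast}$, which is the cell of $x_i$ in the subdivision of $\R^n$ dual to $\psi_h = \boldsymbol{\phi}_h^\ast$; intersecting with $Y$ and recalling $\nu = 1_Y g\,dy$ gives $\operatorname{Vol}_\nu(F_i^{\boldsymbol{\phi}_h^\ast}\cap Y) = f_i$.

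I expect the one genuinely delicate point to be matching the two ``ambient'' pictures: $\phi_h$ is a priori a convex function on all of $\R^n$ with $\overline{(\partial\phi_h)(\R^n)} = \overline Y$, whereas the proposition wants statements about its restriction to the compact polytope $X_h$, and conversely $\psi_h$ lives naturally on $\overline Y$. The clean way to handle this, which I would adopt, is to work throughout with the pair of Legendre-dual lsc convex functions $\chi_{\overline{X_h}}\phi_h$ and $\chi_{\overline Y}\psi_h$ --- verifying via point 5 (and $\overline{(\partial(\chi_{\overline{X_h}}\phi_h))(\R^n)} = \overline Y$, which holds because all subgradients of $\phi_h$ at points of $X_h$ already lie in $\overline Y$) that these two are indeed Legendre-dual to each other --- and then read every assertion of the proposition off the polyhedral structure of this pair, invoking points 2--8 mechanically. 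The risk is only bookkeeping: making sure that ``$\psi_h$ piecewise affine on $\overline Y$'' really uses all the $x_i$ as distinct slopes (which requires $f_i > 0$; if some $f_i = 0$ that $x_i$ simply drops out and should be noted), and that the vertices $\{y_j\}$ of the decomposition of $Y$ are exactly the subgradient images of the $0$-cells of the decomposition of $X_h$. None of this requires new ideas beyond Lemma \ref{lem:uniqueness and extension} and the toolbox of Section \ref{subsec:Recap-of-the}; the proof is essentially an extended unwinding of Legendre duality for finitely supported Monge-Amp\`ere measures.
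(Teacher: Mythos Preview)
Your overall strategy---unwind Legendre duality using points 1--8 of Section~\ref{subsec:Recap-of-the}---is the same as the paper's, and the first bullet (formula~\eqref{eq:psi h as leg of discr} and the discrete Monge--Amp\`ere equation) goes through essentially as you describe, via points~4 and~5. But there is a genuine gap in your treatment of the second bullet.

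When you write ``Taking Legendre transform back, $\phi_h = (\chi_{\overline Y}\psi_h)^\ast$ on $X_h$ \dots\ This yields \eqref{eq:phi h as leg of discr}'', the last implication is unjustified. Knowing $\phi_h(x)=\sup_{y\in\overline Y}\bigl(x\cdot y-\boldsymbol\phi_h^\ast(y)\bigr)$ does \emph{not} by itself give the finite max over $\{y_j\}$, because the unconstrained argmax of the concave function $y\mapsto x\cdot y-\boldsymbol\phi_h^\ast(y)$ on $\R^n$ could a priori fall outside $\overline Y$; in that case the constrained sup would be attained on $\partial Y$ and $\phi_h$ need not be piecewise affine on $X_h$ at all (cf.\ the remark after the proposition: when $Y$ is the unit ball and $\mu_h=\delta_0$, $\phi_h(x)=|x|$). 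The paper closes this gap with a short but essential contradiction argument: if some vertex of the global piecewise affine function $\tilde\psi_h:=\boldsymbol\phi_h^\ast$ on $\R^n$ lay outside $Y$, then by hyperplane separation one of its facets $F_i^{\boldsymbol\phi_h^\ast}$ would miss $Y$ entirely, forcing $f_i=\mathrm{Vol}_\nu(F_i^{\boldsymbol\phi_h^\ast}\cap Y)=0$, contrary to hypothesis. Once all vertices of $\tilde\psi_h$ are known to sit in $Y$, point~4 applied to $\tilde\psi_h$ gives $\overline{(\partial\tilde\psi_h)(Y)}=X_h$, and then the same point-5 argument you used for the first bullet (with roles swapped) yields \eqref{eq:phi h as leg of discr} and hence $\phi_h|_{X_h}=\boldsymbol\phi_h^{\ast\ast}|_{X_h}$, i.e.\ the convex-hull-of-graph description. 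Your third-paragraph plan to ``work with the Legendre-dual pair $\chi_{\overline{X_h}}\phi_h$ and $\chi_{\overline Y}\psi_h$'' does not sidestep this: the claim that these two are Legendre-dual is itself equivalent to the vertices-in-$Y$ statement you have not proved. (A minor slip: ``by point~8 applied to $\psi_h$ we get $\mathrm{supp}(MA(\psi_h))\subset\{x_i\}$'' has the roles reversed---point~8 gives $\mathrm{supp}(MA(\tilde\psi_h))=\{y_j\}$; the $x_i$ are the slopes, not the vertices.)
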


\begin{proof}
This is without doubt essentially well-known (and it is, as discussed
below, closely related to results in computational geometry). But
for completeness a simple analytic proof is provided using the basic
properties of the Legendre transform recalled in Section \ref{subsec:Recap-of-the},
point $1-8.$ First observe that 
\[
\overline{(\partial\phi_{h})(\R^{n})}=\overline{(\partial\phi_{h})(\{x_{i}\})}=\overline{Y},
\]
 as follows directly from the defining property of $\phi_{h}$ combined
with point 4. Hence, for almost any $y\in\overline{Y}$ there exists
$x_{i}$ such that $y\in\partial\phi(x_{i}).$ But then formula \ref{eq:psi h as leg of discr}
on $Y$ follows from point $5.$ Now formula \ref{eq:f i eqial integral over F i}
follows from the very definition $MA(\phi_{h})[F_{i}^{\psi_{h}}].$ 
\end{proof}
To prove formula \ref{eq:phi h as leg of discr} on $X_{h}$ it will,
by the previous argument (with $\phi_{h}$ replaced by $\psi_{h})$
be enough to show that 
\begin{equation}
\overline{(\partial\psi_{h})(Y)}=\overline{(\partial\psi_{h})(\{y_{i}\})}=X_{h}.\label{eq:grad image psh h in pf}
\end{equation}
 Denote by $\tilde{\psi}_{h}$ the canonical piecewise affine extension
of $\psi_{h|Y}$ to $\R^{n}$ (defined by formula\ref{eq:phi h as leg of discr}).
Note that 
\[
\overline{(\partial\tilde{\psi}_{h})(\R^{n})}=X_{h}.
\]
 Indeed, by definition, $\tilde{\psi}_{h}=\boldsymbol{\phi}_{h}^{*}$
on all of $\R^{n}$ and hence this follows from point 3. All that
remains is thus to verify that $\overline{(\partial\tilde{\psi}_{h})(\R^{n})}=\overline{(\partial\tilde{\psi}_{h})(Y).}$
To this end it is, by point 4, enough to show that $MA(\tilde{\psi}_{h})$
is supported in $Y.$ By point 8 this amounts to showing that the
vertices $y_{i}$ defined by $\tilde{\psi}_{h}$ are all contained
in $Y.$ Assume, to get a contradiction, that this is not the case.
Then there exists a facet $F_{i}$ of $\tilde{\psi}_{h}$ which does
not intersect $Y$ (by the hyperplane separation theorem for convex
sets). But this contradicts formula \ref{eq:f i eqial integral over F i}.
This proves formula \ref{eq:grad image psh h in pf} and hence formula
\ref{eq:phi h as leg of discr}, as well. All in all, this means that
\[
\phi_{h}=(\chi_{K}+\phi_{h})^{**}\,\,\,\text{on \,\ensuremath{X_{h}},\,\,\ensuremath{K:=\{x_{i}\}_{i=1}^{N}}}.
\]
 But then it follows, from general principles, that on $\R^{n}$ 
\[
\phi_{h}(x)=\sup\left\{ \Phi(x):\,\,\text{\ensuremath{\Phi\,}convex\,on \ensuremath{\R^{n},\,}}\Phi\leq\phi_{h}\,\text{on \,\ensuremath{K}}\right\} 
\]
Now, denote by $\bar{\phi}$ the convex function on $X_{h}$ whose
graph is the convex hull of the discrete graph of $\boldsymbol{\phi}_{h}.$
Equivalently, this means that
\[
x\in X_{h}\implies\bar{\phi}(x)=\sup\left\{ \Phi(x):\,\,\text{\ensuremath{\Phi\,}convex\,on \ensuremath{X_{h},\,}}\Phi\leq\phi_{h}\,\text{on \,\ensuremath{K}}\right\} 
\]
Hence, $\chi_{K}\bar{\phi}\leq\phi_{h}$ on $\R^{n}$ and $\phi_{h}\leq\bar{\phi}$
on $X_{h}$ which implies that $\phi_{h}=\bar{\phi}$ on $X_{h},$
i.e. the graph of $\phi_{h}$ is the convex hull of the discrete graph
of $\boldsymbol{\phi}_{h},$ as desired. 
\begin{rem}
If $Y$ is a polytope it follows from classical results that $\phi_{h}$
is piecewise affine on all of $\R^{n}$ (see (see \cite[Section 17.2]{ba}).
However, this is not true in general, as illustrated by the case when
$Y$ is the unit-ball and $\mu_{h}=\delta_{0}$, so that $\phi_{h}(x)=|x|.$
\end{rem}

By the next lemma the solution $\boldsymbol{\phi}_{h}$ to the discrete
Monge-Ampère equation\ref{eq:f i eqial integral over F i} is, in
fact, the unique solution (mod $\R):$ 
\begin{lem}
\label{lem:unique disc ma eq}The equation \ref{eq:f i eqial integral over F i}
admits a unique solution in $\R^{n}/\R.$ 
\end{lem}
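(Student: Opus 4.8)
The plan is to prove uniqueness of the solution to the discrete Monge--Amp\`ere equation \eqref{eq:f i eqial integral over F i} by reducing it to the uniqueness already established for the ``continuous'' second boundary value problem. The key observation is that any vector $\boldsymbol{\phi}\in\R^{N}$ solving \eqref{eq:f i eqial integral over F i} gives rise, via its Legendre transform $\boldsymbol{\phi}^{*}$ (a piecewise affine convex function on $\R^{n}$, by point 6 of Section \ref{subsec:Recap-of-the}), to a genuine convex function on $\R^{n}$, namely $\phi:=\boldsymbol{\phi}^{**}=(\boldsymbol{\phi}^{*})^{*}$, and I would like to show this $\phi$ solves $MA_g(\phi)=\mu_h$ with $\overline{(\partial\phi)(\R^{n})}=\overline{Y}$, so that Lemma \ref{lem:uniqueness and extension} (uniqueness mod $\R$ for the second boundary value problem, applied with $X=\R^n$ or any convex domain containing the support) forces $\phi$, hence the vector $\boldsymbol{\phi}=(\phi(x_1),\dots,\phi(x_N))$, to be unique mod $\R$.

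First I would unwind the meaning of \eqref{eq:f i eqial integral over F i}. Let $\boldsymbol{\phi}$ be a solution; set $\psi:=\boldsymbol{\phi}^{*}$, which by point 6 is piecewise affine with max taken over the $x_i$, and let $\{F_i^{\psi}\}$ be the induced cells. The equation says $\mathrm{Vol}_\nu(F_i^{\psi}\cap Y)=f_i$. Since the $f_i$ sum to $1=\nu(Y)$ and the cells cover $\R^n$, this forces (up to $\nu$-null sets, which is enough since $\nu$ has a positive density on $Y$) that $Y$ is covered by the $F_i^\psi$ and that no cell indexed beyond those ``seeing'' a vertex in $Y$ carries mass --- in particular, arguing exactly as in the proof of Proposition \ref{prop:structure of sol over X h} (the hyperplane separation argument showing the vertices $y_i$ of $\tilde\psi_h$ lie in $Y$), one gets that all vertices of $\psi$ lie in $\overline{Y}$ and $\overline{(\partial\psi)(Y)}=\overline{(\partial\psi)(\R^n)}=X_h=\mathrm{conv}\{x_i\}$. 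Consequently, setting $\phi:=\psi^{*}$, point 3 gives $\overline{(\partial\phi)(\R^n)}=\overline{\{\psi^*<\infty\}}$; but $\psi$ is finite everywhere so that reasoning is reversed --- instead one uses points 2 and 4: $\overline{(\partial\phi)(\R^n)}=\overline{(\partial\phi)(\mathrm{supp}\,MA(\phi))}$, and $\mathrm{supp}\,MA(\phi)\subseteq\{x_1,\dots,x_N\}$ by point 8 since $\phi=\psi^*$ is piecewise affine with facet-slopes among the $x_i$. One then computes $MA_g(\phi)\{x_i\}=\mathrm{Vol}_\nu(\partial\phi(x_i))=\mathrm{Vol}_\nu(F_i^{\psi})$ (using $\partial\phi(x_i)=F_i^\psi$ via point 2 and the inverse relation between $\partial\phi$ and $\partial\psi$), and because all relevant mass sits inside $Y$ this equals $\mathrm{Vol}_\nu(F_i^{\psi}\cap Y)=f_i$. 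Hence $MA_g(\phi)=\mu_h$ and $\overline{(\partial\phi)(\R^n)}=\overline{Y}$.

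With that in hand, $\phi$ is a solution of the second boundary value problem \eqref{eq:ma eq with mu intro}, \eqref{eq:sec bd cond intro} with source $\mu_h$ (a probability measure of compact support) and target $\nu$ on the convex domain $Y$; by Lemma \ref{lem:uniqueness and extension} it is uniquely determined mod $\R$. Since $\boldsymbol{\phi}=(\phi(x_1),\dots,\phi(x_N))$ is recovered from $\phi$ by restriction to the point cloud, the vector is unique mod $\R$ as well, which is the assertion of the lemma.

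The main obstacle is the bookkeeping at the boundary: showing rigorously that a solution of the purely combinatorial equation \eqref{eq:f i eqial integral over F i} forces all vertices of $\psi=\boldsymbol{\phi}^{*}$ into $\overline{Y}$ (equivalently, that the Laguerre-type cells exhaust $Y$ with no ``leakage'' to cells whose dual slopes lie outside $X_h$), and that consequently $\partial\phi(x_i)\cap Y$ may be replaced by $\partial\phi(x_i)$ in the volume count. This is precisely the separating-hyperplane argument already carried out for $\tilde\psi_h$ in the proof of Proposition \ref{prop:structure of sol over X h}, so I would simply invoke and lightly adapt it rather than redo it; everything else is a routine application of points 1--8 of Section \ref{subsec:Recap-of-the} together with Lemma \ref{lem:uniqueness and extension}.
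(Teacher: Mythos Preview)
Your strategy---reduce uniqueness of the discrete equation to Brenier/Lemma~\ref{lem:uniqueness and extension} by passing through the Legendre transform $\psi=\boldsymbol{\phi}^{*}$---is exactly the route the paper takes in its second argument. The execution, however, has a real gap. You set $\phi:=\psi^{*}=\boldsymbol{\phi}^{**}$ and then assert $\overline{(\partial\phi)(\R^{n})}=\overline{Y}$; this is false. Since $\psi$ is finite on all of $\R^{n}$, point~3 gives $\overline{(\partial\phi)(\R^{n})}=\overline{\{\psi<\infty\}}=\R^{n}$, and concretely $\phi$ equals $+\infty$ off the convex hull $X_{h}$, so at any boundary vertex $x_{i}\in\partial X_{h}$ the set $\partial\phi(x_{i})$ contains the full normal cone to $X_{h}$ and is unbounded. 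Your detour through points~2 and~4 cannot repair this (and the sentence ``$\phi=\psi^{*}$ is piecewise affine with facet-slopes among the $x_{i}$'' has the roles of $\phi$ and $\psi$ swapped). Consequently $\phi$ is \emph{not} a solution of the second boundary value problem, and Lemma~\ref{lem:uniqueness and extension} does not apply to it as stated.

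The paper handles this by working with the ``projected'' function $P^{Y}\boldsymbol{\phi}:=(\chi_{\overline{Y}}\psi)^{*}$ instead of $\boldsymbol{\phi}^{**}$; this \emph{does} satisfy $(\partial P^{Y}\boldsymbol{\phi})(\R^{n})\subset\overline{Y}$ and still has $MA_{g}(P^{Y}\boldsymbol{\phi})=\mu_{h}$ (since $(\nabla\psi)_{*}\nu=\mu_{h}$ is exactly equation~\eqref{eq:f i eqial integral over F i}), so Lemma~\ref{lem:uniqueness and extension} yields its uniqueness. One then needs an extra step you do not supply: recovering $\boldsymbol{\phi}(x_{i})$ from $P^{Y}\boldsymbol{\phi}$. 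The paper does this via the general envelope fact that $MA(P^{Y}\boldsymbol{\phi})$ vanishes on $\{P^{Y}\boldsymbol{\phi}<\boldsymbol{\phi}\}$, forcing $P^{Y}\boldsymbol{\phi}=\boldsymbol{\phi}$ on $\mathrm{supp}\,\mu_{h}$. (Equivalently, and closer to your instinct: $f_{i}>0$ forces the cell $F_{i}^{\psi}\cap Y$ to have interior, hence $\boldsymbol{\phi}(x_{i})=x_{i}\cdot y-\psi(y)$ for any such $y$, so $\boldsymbol{\phi}(x_{i})$ is read off from $\psi|_{Y}$, which is unique by Brenier.) Your separating-hyperplane idea is relevant, but it goes into checking $MA_{g}(P^{Y}\boldsymbol{\phi})=\mu_{h}$, not into the boundary condition for $\boldsymbol{\phi}^{**}$.
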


\begin{proof}
This is a consequence of the results in \cite{gu et al,k-m-t}: the
solutions are critical point of the Kantorovich functional on $\R^{N},$
which in \cite{k-m-t,gu et al} is shown to be strictly concave on
the subset $\mathcal{K}_{+}$ of $\R^{N}/\R$ defined as all $\boldsymbol{\phi}$
such that $\text{Vol}_{\nu}\ensuremath{(F_{i}^{\boldsymbol{\phi}^{*}}\cap Y)>0}$
for $i=1,..,N$ (using properties of graph Laplacians). But it may
be illuminating to point out that the uniqueness result is also a
consequence of the following general result (applied to $X=\{x_{1},...x_{N}\}$
and $\mu=\mu_{h}).$ Let $\nu$ be a probability measure of the form
$g_{Y}dy$ with $g>0$ and $\mu$ a probability measure on $\R^{n}$
with support compact $X.$ Assume that $\phi$ is a function on $X$
such that $(\nabla(\chi_{X}+\phi)^{*})_{*}\nu=\mu.$ Then $\phi$
is uniquely determined (mod $\R)$ a.e. with respect to $\mu$ (and
moreover, equal to the restriction to $X$ of a convex function on
$\R^{n}$ with subgradient image in $Y)$$.$ Indeed, setting $\psi:=(\chi_{Y}+\phi)^{*}$
the argument in the proof of Lemma \ref{lem:unique disc ma eq} gives
that the convex function $P^{Y}\phi:=(\chi_{Y}+\psi)^{*}(=(\chi_{Y}+\phi)^{*})$
on $\R^{n}$ is uniquely determined and satisfies $MA_{g}(P^{Y}\phi)=\mu.$
But, by general principles, we have that 
\begin{equation}
MA(P^{Y}\phi)=0\,\,\text{on\,}\{P^{Y}\phi<\phi\}\label{eq:vanishing of ma}
\end{equation}
and hence $P^{Y}\phi=\phi$ on $X,$ showing that $\phi$ is uniquely
determined (mod $\R)$ on $X.$ The vanishing \ref{eq:vanishing of ma}
can, for example, by shown by noting that $P^{Y}\phi$ is the upper
envelope of all functions $\varphi$ in $\mathcal{C}_{Y}(\R)$ dominated
by $\phi.$ Then \ref{eq:vanishing of ma} follows from a local argument
using the maximum principle for the Monge-Ampère operator (just as
in the more general case of the complex Monge-Ampère operator considered
in \cite[Prop 2.10]{ber-bo}).
\end{proof}
There has been extensive numerical work on constructing solutions
to the discrete Monge-Ampère equation \ref{eq:f i eqial integral over F i}
on $\R^{N},$ based on computational geometry \cite{au,a-h-a,me,l-s,k-m-t}.
In particular, it was shown in \cite{k-m-t} that the solution $\boldsymbol{\phi}_{h}\in\R^{N}$
is the limit point of a damped Newton iteration, which converges globally
at a linear rate (and locally at quadratic rate if $g$ is Lipschitz
continuous$)$. Performing one step in the iteration, $\boldsymbol{\phi}^{(m)}\mapsto\boldsymbol{\phi}^{(m+1)},$
amounts to computing the convex hull in $\R^{n+1}$ of the discrete
graph over $\{x_{1},...x_{N}\}$ defined by $\boldsymbol{\phi}^{(m)}$
(or equivalently, the corresponding power diagram in $\R^{n}$ and
its intersection with $Y;$ see Section \ref{subsec:Relations-to-duality}
below). 

Here we show that Theorems \ref{thm:thm H1 intro}, \ref{thm:(General-case) intro}
 yield a quantitative rate of convergence, in the limit when the spatial
resolution $h\rightarrow0,$ for the solutions $\boldsymbol{\phi}_{h}$
and the corresponding piecewise constant maps $T_{h}$ from $X_{h}$
to $\R^{n}$ defined as follows. Consider the tessellation of $X_{h}$
by the convex polytopes $F_{j}^{*}$ obtained by projecting to $\R^{n}$
the facets of the ``upper boundary'' of the convex hull in $\R^{n+1}$
of the discrete graph $\Gamma_{h}$ of the solution $\boldsymbol{\phi}_{h}\in\R^{N}.$
Then $T_{h}$ maps $F_{j}^{*}$ to the dual vertex $y_{j}\in\R^{n}.$
In geometric terms, $y_{j}$ is the projection to $\R^{n}$ of the
corresponding normal of the graph $\Gamma_{h}$ in $\R^{n+1}$ (normalized
so that its projection to $\R$ is $-1).$ 

We now come back to the setup introduced in Section \ref{subsec:Convergence-rates-for},
where $T_{\mu}^{\nu}$ denotes the optimal $L^{\infty}-$map transporting
$\mu$ to $\nu$ and $h$ denotes the spatial resolution of the discretization
$\mu_{h}$ of $\mu.$
\begin{thm}
\label{thm:computional geometr}Assume that $(X,Y,\mu,\nu)$ is regular
(or more generally, that the potential $\phi$ of $T_{\mu}^{\nu}$
is uniformly convex) and let $\boldsymbol{\phi}_{h}\in\R^{N}$ be
the normalized solution to the discrete Monge-Ampère equation \ref{eq:f i eqial integral over F i}
and denote by $T_{h}(=\nabla\phi_{h})$ the corresponding piecewise
constant maps from $X_{h}$ to $\R^{n}.$ Then there exists a constant
$C,$ depending on $(X,Y,\mu,\nu),$ such that
\[
\left\Vert T_{h}-T_{\mu}^{\nu}\right\Vert _{L^{2}(X_{h})}^{2}:=\sum_{j=1}^{M}\int_{F_{j}^{*}}|y_{j}-T_{\mu}^{\nu}(x)|^{2}dx\leq Ch
\]
where $T_{\mu}^{\nu}$ is the optimal diffeomorphism transporting
$\mu$ to $\nu.$ In the general case when $X$ and $Y$ are bounded
domains in $\R^{n}$ with $Y$ assumed convex and $\nu$ a probability
measure of the form \ref{eq:def of nu} the estimates above hold if
$h$ is replaced by $h^{1/2^{(n-1)}}$ and then the constant $C$
only depends on upper bounds on the the diameters of $X$ and $Y$
and on a positive lower bound on $\delta(:=\sup_{Y}(\nu/dy)).$ 
\end{thm}

\begin{proof}
Combing Theorems \ref{thm:thm H1 intro}, \ref{thm:(General-case) intro}
with Prop \ref{prop:structure of sol over X h} and Lemma \ref{lem:unique disc ma eq}
immediately yields the theorem. 
\end{proof}

\subsection{\label{subsec:Relations-to-duality}Comparison with duality in computational
geometry}

Proposition \ref{prop:structure of sol over X h} is closely related
to the well-known duality in computational geometry between \emph{weighted
Voronoi tessellations} (also knows as Laguerre tessellations and power
diagrams) of $\R^{n}$ and \emph{weighted Delaunay tessellations \cite{au}}
of convex polytopes (see also \cite{gu et al}). To briefly explain
this we first note that, by definition, the facets $F_{i}^{\boldsymbol{\phi}_{h}^{*}}$
in Proposition \ref{prop:structure of sol over X h} have the property
that $\boldsymbol{\phi}_{h}^{*}(y)$ is affine on $F_{i}^{\boldsymbol{\phi}_{h}^{*}}$
and equal to $x_{i}\cdot y-\phi_{h}(x_{i})$ there (which is the affine
function realizing the max over $x_{j}$ in formula \ref{eq:phi h as leg of discr}).
Completing the square this means that

\[
F_{i}^{\boldsymbol{\phi}_{h}^{*}}=\left\{ y\in\R^{n}:\,\,|x_{i}-y|^{2}+w_{i}\leq|x_{j}-y|^{2}+w_{j}\,\forall j\right\} \,\,\,\,w_{i}:=2\phi_{h}(x_{i})-|x_{i}|^{2},
\]
which is the definition of the cells in the weighted Voronoi tessellation
of $\R^{n}$ associated to the weighted points $(x_{i};w_{i})_{i=1}^{N})$
\cite{au}. The corresponding weighted Delaunay tessellation is defined
as the projection to $X_{h}$ of the polyhedral cell-complex in $\R^{n+1}$
defined by the convex hull of the graph of $\boldsymbol{\phi}_{h}.$
The duality in Proposition \ref{prop:structure of sol over X h} implies
that the corresponding sub-gradient map $\partial\phi_{h}$ maps the
facets $F_{i}^{*}$of the weighed Delaunay tessellation of $X_{h}$
to the vertices $y_{i}$ of the weighted Voronoi tessellation of $\R^{n}.$
More generally, $p-$dimensional faces correspond to $(n-p)-$dimensional
faces \cite{au}.
\begin{rem}
By the duality above, computing a convex hull of a discrete graph
over $N$ points in $\R^{n}$ is equivalent to computing the corresponding
weighted Voronoi tessellation (as emphasized in\emph{ \cite{au}}).
For example, when $n=2$ the time-complexity is $O(N\log N).$ Moreover,
if $g$ is constant and $Y$ is a polytope then the computation of
the volumes of the corresponding cells has time-complexity $O(N)$
(since the cells are convex polytopes). More generally, efficient
computation of the volume can be done if the density $g$ is piecewise
affine. However, the Newton iteration referred to above (as opposed
to a steepest descent iteration) also requires computing the volumes
of the intersections of the cells \cite{me,l-s}. This has worst-case
time-complexity $O(N^{2}),$ but the experimental findings in \cite{me,l-s}
indicate much better near linear time-complexity.
\end{rem}

\subsection{\label{subsec:Comparison-with-semi-discrete}Comparison with semi-discrete
optimal transport}

While the $L^{\infty}-$map $T:=\nabla\phi$ is the optimal map pushing
forward the measure $\mu\in\mathcal{P}_{as}(X)$ to $\nu\in\mathcal{P}_{ac}(Y),$
\[
T:\,X\rightarrow Y,\,\,\,T_{*}:\,\mu\mapsto\nu
\]
 the push-forward of the discrete measure $\mu_{h}$ under the piece-wise
constant map 
\[
T_{h}:\,X_{h}\rightarrow Y
\]
 is not even well-defined. However, the inverse $(T_{h})^{-1}$ of
$T_{h}$ is a well-defined $L^{\infty}-$map from $Y$ to $X_{h},$
pushing forward $\nu$ to the discrete measure $\mu_{h}$ on $X_{h}$.
In fact, $(T_{h})^{-1}$ is the optimal such map, as follows from
standard results in the theory of semi-discrete optimal transport
(indeed, $T_{h}^{-1}=\nabla\phi_{h}^{*},$ where $\phi_{h}^{*}$ is
the Legendre transform of $\phi_{h}).$ However, the notation used
here differs from the standard notation adopted in the literature
on semi-discrete optimal transport \cite{k-m-t}. Indeed, in the latter
case the measure $\nu$ is viewed as the source measure. Accordingly,
the space that is here called $X$ is called $Y$ in \cite{k-m-t}
and the measure $\mu$ that is treated as a source measure here is
viewed as the target measure in \cite{k-m-t}, where it is called
$\nu.$ Moreover, the function $-\phi_{h}(x)+|x|^{2}/2$ on $X,$
in the present notation, corresponds to the function $\psi$ in the
notation of \cite{k-m-t} (when the cost function is taken to be $|x-y|^{2}/2).$
The mismatch between the notation here and the one in \cite{k-m-t}
is a reflection of the fact that here the emphasis is put on the solutions
of the Monge-Ampère equations (and their discretizations), rather
than on semi-discrete optimal transport maps.

\section{\label{sec:The-periodic-setting}The periodic setting }

Assume given a $\Z^{n}-$periodic measures $\mu$ and $\nu$ on $\R^{n}$
normalized so that their total mass on a (or equivalently any) fundamental
region is equal to one. We will assume that $\nu$ has a positive
density $g$ with positive lower bound $\delta>0.$ We will identify
$\mu$ and $\nu$ with probability measures on the torus $M:=(\frac{\R}{\Z})^{n}.$
We endow $M$ with its standard flat Riemannian metric induced from
the Euclidean metric on $\R^{n}$ and denote by $dx$ the corresponding
volume form on $M.$ Setting

\[
u(x):=\phi(x)-|x|^{2}/2
\]
gives a bijection between quasi-convex functions $\phi$ on $\R^{n}$
(i.e. such that $\partial\phi$ is periodic) and functions $u\in C^{0}(M)$
which are quasi-convex in the sense that $\nabla^{2}u+I\geq0$ holds
in the weak sense of currents.

When $u\in C^{2}(M)$ and $\mu=fdx$ the Monge-Ampère \ref{eq:ma eq with mu intro}
for a quasi-convex function $\phi$ on $\R^{n}$ is equivalent to
the following equation for $u:$ 
\[
g(I+\nabla u(x))\det(I+\nabla^{2}u)dx=fdx
\]
 We will say that $(\mu,\nu)$ is regular if such a solution exists.
This is the case if both $f$ and $g$ are Hölder continuous and strictly
positive \cite{c-v}. In the case of general $(\mu,\nu)$ there always
exists a weak solution, in the sense of Alexandrov, which is unique
modulo an additive constant \cite{c-e}.

Let now $\mu_{h}$ be a family of discrete measures on $M,$ defined
as in formula \ref{eq:def of mu h} (with $h$ denoting the corresponding
mesh norm). Then the following analog of Theorem \ref{thm:thm H1 intro}
holds:
\begin{thm}
Let $M$ be the $n-$dimensional standard flat torus and $u$ and
$u_{h}$ quasi-convex functions on $M,$ defined as above. Then, in
the regular case, there exists a constant $C$ such that
\[
\left\Vert u_{h}-u\right\Vert _{H^{1}(M)}:=\left(\int_{M}|\nabla u_{h}-\nabla u|^{2}dx\right)^{1/2}\leq Ch^{1/2}
\]
The constant $C$ only depends on $f$ through a positive lower bound
on $(\nabla^{2}u+I).$ The analog of Theorem \ref{thm:(General-case) intro}
also holds with a universal constant constant $C$ only depending
on the dimension $n$ and $\delta^{-1}.$ 
\end{thm}

\begin{proof}
As before it is enough to consider the case when $g=1.$ Setting $M_{\C}:=(\C/\Z+i\Z)^{n}$
(which defines an Abelian variety) and identifying the convex function
$4\pi\phi(x)$ on $\R^{n}$ with a metric on the theta line bundle
$L\rightarrow M_{\C}$ \cite{g-h} this is proved as in the proof
of Theorem \ref{thm:thm H1 intro} (and is, in fact, considerably
simpler as no extension and compactification argument is needed).
In the language of $\omega_{0}-$psh functions this equivalently means
that the quasi-convex function $u(x)$ on $M$ is identified with
the $\omega_{0}-$psh function $4\pi u(z)$ on $M_{\C},$ where $\omega_{0}$
is the standard invariant Kähler form on $M_{\C},$ defined by formula
\ref{eq:def of omega noll}. Then formula \ref{eq:I on toric var}
holds on $M_{\C}$ with $\phi_{i}$ replaced by $u_{i},$ as follows
directly from Stokes theorem on $M_{\C}.$ This implies the general
case by a regularization argument, which is particularly simple in
this setting as one can use ordinary convolutions. Finally, the estimate
of $W_{1}(\mu_{0},\mu_{h})$ proceeds precisely as before, using that
any quasi-convex function on $M$ has Lipschitz constant bounded from
above by $\sqrt{n}$ (see \cite[Lemma 9]{hu}). 
\end{proof}
An analog of Theorem \ref{thm:computional geometr} also holds in
the periodic setting. In fact, the situation is facilitated by the
fact that the solution $\phi_{h}$ is piecewise affine on all of $\R^{n}$
(since the convex hull of the corresponding periodic point-cloud covers
all of $\R^{n}).$ As a consequence, the discrete Monge-Ampère equation
\ref{eq:f i eqial integral over F i} may, in the periodic setting,
be reformulated as follows. Identify a vector $\boldsymbol{\phi}\in\R^{N}$
with a quasi-periodic discrete function on the periodic discrete subset
$\Lambda_{N}$ of $\R^{n}$ determined by the point-cloud $\{x_{1},...,x_{N}\}.$
We will say that the discrete function $\boldsymbol{\phi}$ is \emph{convex}
if its graph over $\Lambda_{N}$ coincides with the boundary vertices
of its convex hull in $\R^{n+1}.$ Then the Monge-Ampère equation
for the quasi-periodic function $\phi_{h}$ on $\R^{n}$ is (almost
tautologically) equivalent to the following discrete Monge-Ampère
equation for a discrete quasi-periodic convex function $\boldsymbol{\phi}_{h}:$
\[
MA_{g}(\boldsymbol{\phi}_{h})=\boldsymbol{f},
\]
 where the discrete Monge-Ampère operator $MA_{g}(\boldsymbol{\phi})(x_{i})$
is defined as $MA_{g}(P\boldsymbol{\phi})\{x_{i}\}$ where $P\boldsymbol{\phi}$
is the function on $\R^{n}$ defined by the convex envelope of $\boldsymbol{\phi.}$
Since $P\boldsymbol{\phi}$ is piecewise affine on $\R^{n}$ this
means that $MA_{g}(\boldsymbol{\phi})(x_{i})$ is the volume (with
respect to to $gdy)$ of the convex hull of the gradients of the affine
functions representing $P\boldsymbol{\phi}$ close to $x_{i}$ (as
in point 6-8 in Section \ref{subsec:Recap-of-the}).
\begin{rem}
The discrete Monge-Ampère equation above can be seen as a quasi-periodic
variant of the Oliker-Prussner discretization of the Dirichlet problem
for the Monge-Ampère operator (where $g=1$ is assumed) \cite{o-p,n-z1,n-z2}.
\end{rem}

\end{document}